\def\includegraphics{}
\newtheorem{theorem}{Theorem}[section]
\newtheorem{lemma}[theorem]{Lemma}
\newtheorem{proposition}[theorem]{Proposition}
\newtheorem{corollary}[theorem]{Corollary}
\theoremstyle{definition}
\newtheorem{definition}[theorem]{Definition}
\newtheorem{remark}[theorem]{Remark}
\numberwithin{equation}{section}
\def \eN{\mbox{E.~Ntienjem}}
\def \waS{\mbox{W.~A.~Stein}}
\def \mN{\mbox{M.~Newman}}
\def \gL{\mbox{G.~Ligozat}}
\def \gK{\mbox{G.~K\"{o}hler}}
\def \ljpK{\mbox{L.~J.~P.~Kilford}}
\def \aA{\mbox{A.~Alaca}}
\def \sA{\mbox{\c{S}.~Alaca}}
\def \aP{\mbox{A.~Pizer}}
\def \ksW{\mbox{K.~S.~Williams}}
\def \jgH{\mbox{J.~G.~Huard}}
\def \gaL{\mbox{G.~A.~Lomadze}}
\def \mL{\mbox{M.~Lemire}}
\def \sC{\mbox{S.~Cooper}}
\def \pcT{\mbox{P.~C.~Toh}}
\def \bR{\mbox{B.~Ramakrishnan}}
\def \bS{\mbox{B.~Sahu}}
\def \mB{\mbox{M.~Besge}}
\def \dY{\mbox{D.~Ye}}
\def \jwlG{\mbox{J.~W.~L.~Glaisher}}
\def \sR{\mbox{S.~Ramanujan}}
\def \yK{\mbox{Y.~Kesicio$\check{g}$lu}}
\def \eR{\mbox{E.~Royer}}
\def \zmO{\mbox{Z.~M.~Ou}}
\def \bkS{\mbox{B.~K.~Spearman}}
\def \hhC{\mbox{H.~H.~Chan}}
\def \exwX{\mbox{E.~X.~W.~Xia}}
\def \xlT{\mbox{X.~L.~Tian}}
\def \oxmY{\mbox{O.~X.~M.~Yao}}
\def \nK{\mbox{N.~Koblitz}}
\def \tM{\mbox{T.~Miyake}}
\def \E{\mbox{$\EuFrak{E}$}}
\def \S{\mbox{$\EuFrak{S}$}}
\def \M{\mbox{$\EuFrak{M}$}}
\author{Eb{\'{e}}n{\'{e}}zer Ntienjem}
\address{
Centre for Research in Algebra and Number Theory \\
School of Mathematics and Statistics \\
Carleton University \\ 
Ottawa, Ontario, Canada, K1S 5B6
}
\email{ebenezer.ntienjem@carleton.ca;ntienjem@gmail.com} 
\keywords{
Sums of Divisors; Convolution Sums; Modular Forms; Dedekind eta function;  
Eisenstein forms; Cusp Forms; Octonary quadratic Forms; 
Number of Representations
}
\subjclass[2010]{11A25, 11F11, 11F20, 11E20, 11E25, 11F27}
\begin{document}

\title[Elementary Evaluation of Convolution Sums for a Class of positive Integers]
{Elementary Evaluation of Convolution Sums involving  
the Sum of Divisors Function for a Class of positive Integers
}


\begin{abstract}
We discuss an elementary method for the evaluation of the convolution sums $\underset{\substack{
 {(l,m)\in\mathbb{N}_{0}^{2}} \\ {\alpha\,l+\beta\,m=n}
} }{\sum}\sigma(l)\sigma(m)$ for those $\alpha,\beta\in\mathbb{N}$ 
for which $\gcd{(\alpha,\beta)}=1$ and $\alpha\beta=2^{\nu}\mho$,  
where $\nu\in\{0,1,2,3\}$ and $\mho$ is 
a finite product of distinct odd primes.  Modular forms are used to achieve this 
result. We also generalize the extraction of the convolution sum to all natural 
numbers. Formulae for the number of representations of a positive integer 
$n$ by octonary quadratic forms using convolution sums belonging 
to this class are then determined when 
$\alpha\beta\equiv 0\pmod{4}$ or $\alpha\beta\equiv 0\pmod{3}$. To achieve this application, 
we first discuss a method to compute all pairs 
$(a,b),(c,d)\in\mathbb{N}^{2}$ necessary for the determination of 
such formulae for the number of representations of a positive integer 
$n$ by octonary quadratic forms when $\alpha\beta$ has the above  
form and $\alpha\beta\equiv 0\pmod{4}$ or $\alpha\beta\equiv 0\pmod{3}$. 
We illustrate our approach by explicitly evaluating the convolution sum for 
$\alpha\beta=33=3\cdot 11,\> \alpha\beta=40=2^{3}\cdot 5$ and   
$\alpha\beta=56=2^{3}\cdot 7$, and by revisiting the evaluation of the convolution 
sums for $\alpha\beta=10$, $11$, $12$, $15$, $24$.
We then apply these convolution sums to determine formulae for the number of 
representations of a positive integer $n$ by octonary quadratic forms. 
In addition, we determine formulae for the number of representations of a positive 
integer $n$ when $(a,b)=(1,1)$, $(1,3)$, $(2,3)$, $(1,9)$. 
\end{abstract}

\maketitle




\section{Introduction} \label{introduction}

We denote by $\mathbb{N}$, $\mathbb{N}_{0}$, $\mathbb{Z}$, $\mathbb{Q}$, 
$\mathbb{R}$ and $\mathbb{C}$ the sets of
positive integers, nonnegative integers, integers, rational numbers, 
real numbers and complex numbers, respectively. 

Suppose that $k\in\mathbb{N}_{0}$ and $n\in\mathbb{N}$. The sum of 
positive divisors of $n$ 
to the power of $k$, $\sigma_{k}(n)$, is defined by  
\begin{equation} \label{def-sigma_k-n}
 \sigma_{k}(n) = \sum_{0<\delta|n}\delta^{k}.
 \end{equation} 
It is obvious from the definition that $\sigma_{k}(m)=0$ for all 
$m\notin\mathbb{N}$.  
We  write $d(n)$ and $\sigma(n)$ as a 
shorthand for $\sigma_{0}(n)$ and $\sigma_{1}(n)$, respectively. 

Assume that the positive integers $\alpha\leq\beta$ are given. Then 
the convolution sum $W_{(\alpha,\beta)}(n)$ is defined by 
 \begin{equation} \label{def-convolution_sum}
 W_{(\alpha, \beta)}(n) =  \sum_{\substack{
 {(l,m)\in\mathbb{N}_{0}^{2}} \\ {\alpha\,l+\beta\,m=n}
} }\sigma(l)\sigma(m).
 \end{equation}
Let $W_{\beta}(n)$ stands for $W_{(1,\beta)}(n)$. 
We set $W_{(\alpha,\beta)}(n)=0$ if for all $(l,m)\in\mathbb{N}^{2}$ it holds that 
$\alpha\,l+\beta\,m\neq n$.

We give the values of $\alpha\beta$ for those convolution sums 
$W_{(\alpha, \beta)}(n)$ which have so far been evaluated in 
\autoref{introduction-table-1}. 
\begin{longtable}{|r|r|r|} \hline 
\textbf{Level $\alpha\beta$}  &  \textbf{Authors}   &  \textbf{References}  \\ \hline
1  &  \mB, \jwlG, \sR\  & \cite{besge,glaisher,ramanujan} \\ \hline
2, 3, 4  & \jgH\ \& \zmO\ \& \bkS\ \&  &  ~  \\
 ~  & \ksW\   & \cite{huardetal} \\ \hline
5, 7  & \mL\ \& \ksW,  \sC\ \& \pcT\   & \cite{lemire_williams,cooper_toh} \\ \hline
6  & \sA\ \& \ksW\   & \cite{alaca_williams} \\ \hline
8, 9  & \ksW\   & \cite{williams2006, williams2005} \\ \hline
10, 11, 13, 14 &  \eR   & \cite{royer} \\ \hline
12, 16, 18, 24 &  \aA\ \& \sA\ \& \ksW\   &
\cite{alaca_alaca_williams2006,alaca_alaca_williams2007,alaca_alaca_williams2007a,alaca_alaca_williams2008}
\\ \hline
15  & \bR\ \& \bS\   & \cite{ramakrishnan_sahu} \\ \hline
10, 20  & \sC\ \& \dY\   & \cite{cooper_ye2014} \\ \hline
23  & \hhC\ \& \sC\   & \cite{chan_cooper2008} \\ \hline
25  & \exwX\ \& \xlT\ \& \oxmY   & \cite{xiaetal2014} \\ \hline
27, 32  & \sA\ \& \yK   & \cite{alaca_kesicioglu2014} \\ \hline
36  & \dY\   & \cite{ye2015} \\ \hline
14, 26, 28, 30 &  \eN  & \cite{ntienjem2015} \\ \hline
22, 44, 52  &  \eN\  & \cite{ntienjem2016b}  \\ \hline
48, 64  &  \eN\  & \cite{ntienjem2016d}  \\ \hline
\caption{Known convolution sums $W_{(\alpha, \beta)}(n)$ of level $\alpha\beta$} 
\label{introduction-table-1}
\end{longtable}

From the levels $\alpha\beta$ listed in \autoref{introduction-table-1} the 
following ones do not belong to the class of positive integers that we are 
handling in this paper: $9$, $16$, $18$, $25$, $27$, $32$, $36$, $48$ and $64$.

We evaluate the convolution sum $W_{(\alpha, \beta)}(n)$ in the case where 
\begin{equation}  \label{class1-conditions}
\alpha\beta=2^{\nu}\mho \text{ with $\nu\in\{0,1,2,3\}$, and $\mho$ odd and 
squarefree positive integer.} 
\end{equation}
The evaluation of the convolution sum for a class of natural numbers and 
especially for this class is new.   
We then apply the result for this class to determine the convolution 
sum for
$\alpha\beta=33=3\cdot 11,\,\alpha\beta=40=2^{3}\cdot 5$ and 
$\alpha\beta=56=2^{3}\cdot 7$. 
Again, these explicit convolution sums have not been evaluated as yet. 
We revisit the evaluation of the convolution sums for $\alpha\beta=10$, $11$, $12$, 
$15$, $24$. The re-evaluation of the convolution sums for $\alpha\beta=10$,  
$11$, $12$, $15$, $24$ improves the previously obtained results.  

Convolution sums are applied to establish explicit formulae for the number of 
representations of a positive integer $n$ by the octonary quadratic forms 
\begin{equation} \label{introduction-eq-1}
a\,(x_{1}^{2} +x_{2}^{2} + x_{3}^{2} + x_{4}^{2})+ b\,(x_{5}^{2} + x_{6}^{2} + 
x_{7}^{2} + x_{8}^{2}),
\end{equation}
and 
\begin{equation} \label{introduction-eq-2}
c\,(x_{1}^{2} + x_{1}x_{2} + x_{2}^{2} + x_{3}^{2} + x_{3}x_{4} + x_{4}^{2})
+ d\,(x_{5}^{2} + x_{5}x_{6}+ x_{6}^{2} + x_{7}^{2} + x_{7}x_{8}+ 
x_{8}^{2}),
\end{equation}
respectively, where $(a,b),(c, d)\in \mathbb{N}^{2}$. 

Known explicit formulae 
for the number of representations of $n$ by the octonary quadratic form 
\autoref{introduction-eq-1} are referenced in \autoref{introduction-table-rep2}
and that for the octonary quadratic form \autoref{introduction-eq-2} in  
\autoref{introduction-table-rep1}.

\begin{longtable}{|r|r|r|} \hline 
$\mathbf{(a,b)}$  &  \textbf{Authors}   &  \textbf{References}  \\ \hline
(1,2)  & \ksW\   & \cite{williams2006} \\ \hline
(1,4)  & \aA\ \& \sA\ \& \ksW\   & \cite{alaca_alaca_williams2007} \\ \hline
(1,5)  & \sC\ \& \dY\   & \cite{cooper_ye2014} \\ \hline
(1,6)  & \bR\ \& \bS\   & \cite{ramakrishnan_sahu}  \\ \hline
(1,7) & \eN\  & \cite{ntienjem2015} \\ \hline 
(1,8)  & \sA\ \& \yK\   & \cite{alaca_kesicioglu2014} \\ \hline
(1,11),(1,13)   &  \eN\  & \cite{ntienjem2016b}  \\ \hline
(1,12),(1,16),  &  ~  & ~ \\ 
(3,4)   &  \eN\  & \cite{ntienjem2016d}  \\ \hline
\caption{Known representations of $n$ by the form \autoref{introduction-eq-1}
} 
\label{introduction-table-rep2}
\end{longtable}

\begin{longtable}{|r|r|r|} \hline 
$\mathbf{(c,d)}$  &  \textbf{Authors}   &  \textbf{References}  \\ \hline
(1,1)  &  \gaL\  & \cite{lomadze} \\ \hline
(1,2)  & \sA\ \& \ksW\   & \cite{alaca_williams}    \\ \hline
(1,3)  & \ksW\   & \cite{williams2005}    \\ \hline 
(1,4),(1,6),  & ~ & ~ \\
(1,8),(2,3)  & \aA\ \& \sA\ \& \ksW\   & \cite{alaca_alaca_williams2006,alaca_alaca_williams2007,alaca_alaca_williams2007a} \\ \hline 
(1,5)  & \bR\ \& \bS\   & \cite{ramakrishnan_sahu}  \\ \hline
(1,9)  & \sA\ \& \yK\   & \cite{alaca_kesicioglu2014} \\ \hline
(1,10), (2,5) & \eN\  & \cite{ntienjem2015} \\ \hline 
(1,12),(3,4)  & \dY\   & \cite{ye2015} \\ \hline
(1,16)   &  \eN\  & \cite{ntienjem2016d}  \\ \hline
\caption{Known representations of $n$ by the form \autoref{introduction-eq-2}
} 
\label{introduction-table-rep1}
\end{longtable}

Based on the structure of $\alpha$ and $\beta$,
we provide a method to determine all pairs $(a,b)\in\mathbb{N}^{2}$ 
and $(c,d)\in\mathbb{N}^{2}$ that are neccessary for the determination 
of the formulae for the number of representations of a positive integer 
by the octonary quadratic forms \autoref{introduction-eq-1} and 
\autoref{introduction-eq-2}. 
Then we determine explicit formulae for the number of representations 
of a positive integer $n$ by the octonary quadratic forms 
\autoref{introduction-eq-1} and \autoref{introduction-eq-2}, 
whenever $\alpha\beta$ has the above form and is such that 
$\alpha\beta\equiv 0\pmod{4}$ or $\alpha\beta\equiv 0\pmod{3}$. 
As an example, we determine formulae for the number of representations of a 
positive integer $n$ by octonary quadratic forms 
\autoref{introduction-eq-1} and \autoref{introduction-eq-2} using the 
convolution sums for $\alpha\beta=33=3\cdot 11$,  
$\alpha\beta=40=2^{3}\cdot 5$ and $\alpha\beta=56=2^{3}\cdot 7$, 
respectively. 

This work is structured as follows. 
In \hyperref[modularForms]{Section \ref*{modularForms}} we discuss basic 
knowledge of modular forms,  
briefly define eta functions and convolution sums. 
The evaluation of the convolution 
sum for the above class of positive integers is discussed in 
\hyperref[convolution_alpha_beta]{Section \ref*{convolution_alpha_beta}}. 
In \hyperref[representations_a_b-c_d]{Section \ref*{representations_a_b-c_d}} 
formulae for the number of representations of a positive integer by the octonary forms 
\autoref{introduction-eq-1} and \autoref{introduction-eq-2} are determined 
for this class of positive numbers. 
Examples to illustrate our method are then given in  
\hyperref[convolution_33_40_56]{Section \ref*{convolution_33_40_56}}.  
The evaluated convolution sums for $\alpha\beta=10$, $11$, $12$, 
$15$, $24$ are revisited in \hyperref[re-evaluation]{Section 
\ref*{re-evaluation}}. We determine in 
\hyperref[representations]{Section \ref*{representations}} 
formulae for the number of representations of a positive integer $n$ for 
the illustrated examples and for $(a,b)=(1,1)$, $(1,3)$, $(2,3)$, $(1,9)$. 
We then conclude in \hyperref[conclusion]{Section \ref*{conclusion}} with 
a brief outlook.

The results of this paper are obtained using Software for symbolic scientific 
computation. This software is composed of the open source software packages 
\emph{GiNaC}, \emph{Maxima}, \emph{REDUCE}, \emph{SAGE} and the commercial 
software package \emph{MAPLE}.


\section{Basic Knowledge} 
\label{modularForms}

The upper half-plane, 
$\mathbb{H}=\{ z\in \mathbb{C}~ | ~\text{Im}(z)>0\}$, 
and $\Gamma=\text{SL}_{2}(\mathbb{R})$ the group of $2\times 2$-matrices 
\begin{math}\left(\begin{smallmatrix} a & b \\ c & d \end{smallmatrix}\right)\end{math} 
such that $a,b,c,d\in\mathbb{R}$ and $ad-bc=1$ are considered in this paper. 

Let $N\in\mathbb{N}$. Then  
\begin{eqnarray*}
\Gamma(N) & = \bigl\{~\left(\begin{smallmatrix} a & b \\ c &
  d \end{smallmatrix}\right)\in\text{SL}_{2}(\mathbb{Z})~ |
  ~\left(\begin{smallmatrix} a & b \\ c &
 d\end{smallmatrix}\right)\equiv\left(\begin{smallmatrix} 1 & 0 \\ 0 & 1 
 \end{smallmatrix}\right) \pmod{N} ~\bigr\}
\end{eqnarray*}
is a subgroup of $\Gamma$ and is known as the \emph{principal congruence subgroup of 
level N}. If a subgroup $H$ of $\Gamma$ contains $\Gamma(N)$, then that subgroup 
is called a \emph{congruence subgroup of level N}.

The following congruence subgroup of level $N$ is relevant for our purpose
 \begin{align*}
\Gamma_{0}(N) & = \bigl\{~\left(\begin{smallmatrix} a & b \\ c &
  d \end{smallmatrix}\right)\in\text{SL}_{2}(\mathbb{Z})~ | ~
   c\equiv 0 \pmod{N} ~\bigr\}.
\end{align*}
Let $N\in\mathbb{N}$, $\Gamma'\subseteq\Gamma$ be a congruence 
subgroup of level $N$,  
$k\in\mathbb{Z}, \gamma\in\text{SL}_{2}(\mathbb{Z})$ and $f^{[\gamma]_{k}} : 
\mathbb{H}\cup\mathbb{Q}\cup\{\infty\} \rightarrow 
\mathbb{C}\cup\{\infty\}$ be 
defined by 
$f^{[\gamma]_{k}}(z)=(cz+d)^{-k}f(\gamma(z))$. 
The following definition is extracted from \nK's book \cite[p.~108]{koblitz-1993}.
\begin{definition} \label{modularForms-defin-2}
Suppose that $N\in\mathbb{N}$, $k\in\mathbb{Z}$, $f$ is a meromorphic function 
on $\mathbb{H}$ and $\Gamma'\subset\Gamma$ is a congruence subgroup of 
level $N$. Let furthermore $\mathbb{N}_{0}^{-}=\{\,-n\,|\,n\in\mathbb{N}_{0}\,\}$ 
be the set of all negative and nonzero natural numbers.
\begin{enumerate}
\item[(a)] $f$ is a \emph{modular function of weight $k$} for 
$\Gamma'$ if
\begin{enumerate}
\item[(a1)] for all $\gamma\in\Gamma'$ it holds that $f^{[\gamma]_{k}}=f$.
\item[(a2)] for any $\delta\in\Gamma$ it holds that $f^{[\delta]_{k}}(z)$ 
can be expressed in the form 
$\underset{n\in\mathbb{Z}}{\sum}a_{n}e^{\frac{2\pi i z n}{N}}$, 
wherein $a_{n}\neq 0$ for finitely many $n\in\mathbb{N}_{0}^{-}$.
\end{enumerate}
\item[(b)] $f$ is a \emph{modular form of weight $k$} for 
$\Gamma'$ if
	\begin{enumerate}
	\item[(b1)] $f$ is a modular function of weight $k$ for $\Gamma'$,
	\item[(b2)] $f$ is holomorphic on $\mathbb{H}$,
	\item[(b3)] for all $\delta\in\Gamma$ and for all $n\in\mathbb{N}_{0}^{-}$
it holds that $a_{n}=0$.
	\end{enumerate}
\item[(c)] $f$ is a \emph{cusp form of weight $k$ for $\Gamma'$} if
	\begin{enumerate}
	\item[(c1)] $f$ is a modular form of weight $k$ for $\Gamma'$, 
	\item[(c2)] for all $\delta\in\Gamma$ it holds that $a_{0}=0$.
	\end{enumerate}
\end{enumerate}
\end{definition}
We denote the set of modular forms of weight $k$ for $\Gamma'$ by 
$\M_{k}(\Gamma')$, the set of cusp forms of weight $k$ for 
$\Gamma'$ by $\S_{k}(\Gamma')$ and the set of 
Eisenstein forms by $\E_{k}(\Gamma')$. 
The sets $\M_{k}(\Gamma'),\,\S_{k}(\Gamma')$ and $\E_{k}(\Gamma')$ 
are vector spaces over $\mathbb{C}$. Hence, $\M_{k}(\Gamma_{0}(N))$ is the 
space of modular forms of weight $k$ for 
$\Gamma_{0}(N)$, $\S_{k}(\Gamma_{0}(N))$ is the space of 
cusp forms of weight $k$ for $\Gamma_{0}(N)$, 
and $\E_{k}(\Gamma_{0}(N))$ is the space of Eisenstein forms. 
The decomposition of the space of modular forms as a direct sum 
of the space generated by the Eisenstein series and the space of cusp 
forms, i.e., 
$\M_{k}(\Gamma_{0}(N)) =
\E_{k}(\Gamma_{0}(N))\oplus\S_{k}(\Gamma_{0}(N))$, 
is well-known; see for example 
\waS 's book (online version) \cite[p.~81]{wstein}. 


As noted in Section 5.3 of \cite[p.~86]{wstein}, if the 
primitive Dirichlet characters are trivial and $2\leq k$ is even, then 
$E_{k}(q) = 1 - \frac{2k}{B_{k}}\,\underset{n=1}{\overset{\infty}{\sum}}\,
\sigma_{k-1}(n)\,q^{n}$, where $B_{k}$ are the Bernoulli numbers.

For the purpose of this paper we only consider trivial primitive Dirichlet 
characters and $2\leq k$ even. Theorems 5.8 and 5.9 in Section 5.3 of 
\cite[p.~86]{wstein} also hold for this special case.

\subsection{Eta Quotients}  \label{etaFunctions}

The Dedekind eta function $\eta(z)$ is defined on the upper half-plane 
$\mathbb{H}$ by
$\eta(z) = e^{\frac{2\pi i z}{24}}\underset{n=1}{\overset{\infty}{\prod}}(1 - e^{2\pi i n z})$.
When we set $q=e^{2\pi i z}$, then we have 
\begin{equation*}
\eta(z) = q^{\frac{1}{24}}\underset{n=1}{\overset{\infty}{\prod}}(1 - q^{n}) = q^{\frac{1}{24}} F(q), \qquad
\text{ where } F(q)=\overset{\infty}{\underset{n=1}{\prod}} (1 - q^{n}).
\end{equation*}

Let $j,\kappa\in\mathbb{N}$ and $e_{j}\in\mathbb{Z}$. According to 
\gK \cite[p.\ 31]{koehler} an \emph{eta product} or \emph{eta quotient}, $f(z)$,  
is a finite product of Dedekind eta functions of the form
\begin{equation}  \label{basisCuspSace-eta-quotient}
\underset{j=1}{\overset{\kappa}{\prod}}\eta(jz)^{e_{j}}.
\end{equation}
Based on this definition there exists $N\in\mathbb{N}$ such that 
$N=\text{lcm}\{j~|~1\leq j\leq\kappa\}$. We call such an $N$ the \emph{level} of 
an eta product. 
An eta product will hence be understood as 
$\underset{j | N}{\overset{}{\prod}}\eta(jz)^{e_{j}}$. 
An eta product $f(z)$ behaves like a modular form of weight $k$ on 
$\Gamma_{0}(N)$ with some multiplier system whenever 
$k=\frac{1}{2}\underset{j=1}{\overset{\kappa}{\sum}}e_{j}$.

In this paper we use eta function, eta quotient and eta product interchangeably 
as synonyms.

The eta function was systematically applied by \mN\ 
\cite{newman_1957,newman_1959} to construct modular forms for 
$\Gamma_{0}(N)$ and then to determine when a function $f(z)$ was a 
modular form for $\Gamma_{0}(N)$. That is partly explained above and leads 
to conditions (i)-(iii) in the following theorem. The order of vanishing 
of an eta function at the cusps of $\Gamma_{0}(N)$, which is condition 
(iv) or (iv$'$) in \autoref{ligozat_theorem}, was determined by 
\gL\ \cite{ligozat_1975}.

In \ljpK 's book \cite[p.~99]{kilford} and \gK 's book \cite[Cor.\ 2.3, p.~37]{koehler} 
the following theorem is proved. We will use that theorem to determine eta 
quotients, $f(z)$, which belong to $\M_{k}(\Gamma_{0}(N))$, and especially 
those eta quotients which are in $\S_{k}(\Gamma_{0}(N))$. 

\begin{theorem}[\mN\ and \gL\ ] \label{ligozat_theorem} 
Let $N\in \mathbb{N}$, $D(N)$ be the set of all positive divisors of 
$N$, $\delta\in D(N)$ and $r_{\delta}\in\mathbb{Z}$. Let furthermore  
$f(z)=\overset{}{\underset{\delta\in D(N)}{\prod}}\eta^{r_{\delta}}(\delta z)$ 
be an $\eta$-quotient. 
If the following four conditions are satisfied

\begin{tabular}{llll}
{\textbf{(i)}} & $\overset{}{\underset{\delta\in D(N)}{\sum}}\delta\,r_{\delta} 
	\,\equiv 0 \pmod{24}$, & 
{\textbf{(ii)}} &  $\overset{}{\underset{\delta\in D(N)}
{\prod}}\delta^{r_{\delta}}$ \text{ is a square in } $\mathbb{Q}$, \\ 
{\textbf{(iii)}} &  $0 < \overset{}{\underset{\delta\in D(N)}
	{\sum}}r_{\delta}\, \equiv 0 \pmod{4}$,  & 
{\textbf{(iv)}}  & $\forall d\in D(N)$ \text{ it holds } 
	$\overset{}{\underset{\delta\in D(N)}{\sum}}\frac{\gcd{(\delta,d)}^{2}}	
	{\delta}\,r_{\delta} \geq 0$,  
\end{tabular}

then $f(z)\in\M_{k}(\Gamma_{0}(N))$, where 
$k=\frac{1}{2}\overset{}{\underset{\delta\in D(N)}{\sum}}r_{\delta}$. 

Moreover, the $\eta$-quotient $f(z)$ belongs to $\S_{k}(\Gamma_{0}(N))$ 
if ${\textbf{(iv)}}$ is replaced by
 
{\textbf{(iv')}} $\forall d\in D(N)$ \text{ it holds } 
$\overset{}{\underset{\delta\in D(N)}{\sum}}\frac{\gcd{(\delta,d)}^{2}}	
{\delta} r_{\delta} > 0$.
\end{theorem}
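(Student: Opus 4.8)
The plan is to prove this purely by bookkeeping on the exponents $r_\delta$, treating it as a pullback of classical transformation laws for $\eta$. First I would recall the fundamental modular equations for the Dedekind eta function, namely $\eta(z+1)=e^{\pi i/12}\eta(z)$ and $\eta(-1/z)=\sqrt{-iz}\,\eta(z)$, which generate the action of $\mathrm{SL}_2(\mathbb{Z})$. From these one derives, for a general $\gamma=\left(\begin{smallmatrix} a & b \\ c & d\end{smallmatrix}\right)\in\Gamma_0(N)$, the transformation law for a single factor $\eta(\delta z)$: writing $\delta z$ in terms of a new matrix $\gamma_\delta\in\mathrm{SL}_2(\mathbb{Z})$ conjugate to $\gamma$, one gets $\eta(\delta\gamma z) = \varepsilon_\delta(\gamma)\,(c'z+d')^{1/2}\eta(\delta z)$ for an explicit root of unity $\varepsilon_\delta(\gamma)$ depending on $a,b,c,d,\delta$ through Dedekind sums. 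Multiplying over $\delta\in D(N)$ with exponents $r_\delta$ shows that $f^{[\gamma]_k}(z) = \bigl(\prod_\delta \varepsilon_\delta(\gamma)^{r_\delta}\bigr)\, f(z)$ times a power-of-$(cz+d)$ discrepancy that collapses to $1$ once the weight is $k=\tfrac12\sum_\delta r_\delta$; this is exactly the sense in which $f$ "behaves like a modular form of weight $k$" as stated in the text.

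Next I would show the three conditions (i)–(iii) force the multiplier $\prod_\delta \varepsilon_\delta(\gamma)^{r_\delta}$ to be identically $1$ on all of $\Gamma_0(N)$, so that $f^{[\gamma]_k}=f$ genuinely holds (condition (a1) of Definition \ref{modularForms-defin-2}). The standard argument: it suffices to check the multiplier on a set of generators of $\Gamma_0(N)$, namely the translation $\left(\begin{smallmatrix} 1 & 1 \\ 0 & 1\end{smallmatrix}\right)$ and matrices of the form $\left(\begin{smallmatrix} a & b \\ N & d\end{smallmatrix}\right)$. On the translation, the accumulated phase is $e^{2\pi i (\sum_\delta \delta r_\delta)/24}$, which is trivial precisely by (i). On the remaining generators, the phase is a product of a Jacobi-symbol/quadratic-residue term — trivialized by requiring $\prod_\delta \delta^{r_\delta}$ to be a rational square, i.e.\ (ii) — and a sign coming from $\sum_\delta r_\delta \bmod 4$ together with the value of $d$, trivialized by (iii). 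I would then invoke holomorphy of $\eta$ and its nonvanishing on $\mathbb{H}$ to get (b2): each $\eta(\delta z)^{r_\delta}$ is holomorphic and nonvanishing on $\mathbb{H}$, hence so is the finite product $f$, regardless of the sign of the $r_\delta$.

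For the cusp conditions I would use Ligozat's formula: the order of vanishing of $f(z)$ at the cusp $\tfrac{c}{d}$ of $\Gamma_0(N)$ (with $d\mid N$, $\gcd(c,d)=1$) is, up to a positive normalizing constant, $\frac{N}{24}\sum_{\delta\in D(N)}\frac{\gcd(d,\delta)^2}{\gcd(d,N/d)\,d\,\delta}\,r_\delta$, which is a positive multiple of $\sum_\delta \frac{\gcd(\delta,d)^2}{\delta}r_\delta$. Condition (iv) says every such order is $\ge 0$, so the $q$-expansion of $f^{[\delta]_k}$ at each cusp has no negative-index terms, giving (b3) and hence $f\in\M_k(\Gamma_0(N))$; strengthening to (iv$'$) makes every order strictly positive, i.e.\ $a_0=0$ at every cusp, which is (c2) and yields $f\in\S_k(\Gamma_0(N))$. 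I would note that one should check finitely many cusps suffice and that the divisors $d\mid N$ index all cusps of $\Gamma_0(N)$ up to equivalence (true because $\{$cusps of $\Gamma_0(N)\}$ are parametrized by such $d$ together with a residue mod $\gcd(d,N/d)$, and the order formula depends on $d$ alone up to the positive constant).

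The main obstacle is not conceptual but the careful computation of the root of unity $\varepsilon_\delta(\gamma)$ — the Dedekind-sum cocycle attached to $\eta(\delta z)$ under $\Gamma_0(N)$ — and the verification that its $r_\delta$-weighted product over $\delta\mid N$ is killed exactly by (i)–(iii) and no less. Since this is precisely the content of Newman's papers \cite{newman_1957,newman_1959} and is reproduced in \cite{kilford} and \cite[Cor.\ 2.3, p.~37]{koehler}, I would cite those rather than rederive the Dedekind-sum identities, and similarly quote Ligozat \cite{ligozat_1975} for the cusp-order formula; the proof then reduces to assembling these ingredients in the order above.
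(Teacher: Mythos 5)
The paper offers no proof of this theorem at all: it is stated as imported background, with the modularity criterion attributed to Newman \cite{newman_1957,newman_1959}, the cusp condition to Ligozat \cite{ligozat_1975}, and the reader referred to \cite[p.~99]{kilford} and \cite[Cor.~2.3, p.~37]{koehler} for proofs. So there is nothing in the paper to measure your argument against; what you have written is the standard proof skeleton from those sources, and it is sound in outline. The key steps are all present and correctly placed: conjugating $\gamma=\left(\begin{smallmatrix} a & b \\ c & d \end{smallmatrix}\right)\in\Gamma_{0}(N)$ to $\gamma_{\delta}=\left(\begin{smallmatrix} a & b\delta \\ c/\delta & d \end{smallmatrix}\right)\in\text{SL}_{2}(\mathbb{Z})$ so that each factor $\eta(\delta z)^{r_{\delta}}$ contributes an automorphy factor $(cz+d)^{r_{\delta}/2}$ (whence the weight $k=\frac{1}{2}\sum_{\delta}r_{\delta}$), reducing the residual multiplier to a character of $\Gamma_{0}(N)$ that conditions (i)--(iii) trivialize, holomorphy and nonvanishing of $\eta$ on $\mathbb{H}$, and Ligozat's cusp-order formula, whose sign is that of $\sum_{\delta}\gcd{(\delta,d)}^{2}r_{\delta}/\delta$, so that (iv) gives holomorphy at every cusp and (iv$'$) gives vanishing there; your remark that the divisors $d$ of $N$ suffice to index the cusp classes is also the right justification.

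Two cautions if you intend this as more than an outline. First, the reduction to generators is the one genuinely shaky step: $\Gamma_{0}(N)$ is not in general generated by $\left(\begin{smallmatrix} 1 & 1 \\ 0 & 1 \end{smallmatrix}\right)$ together with matrices whose lower-left entry equals $N$. The literature proof avoids this by applying Dedekind's explicit multiplier formula to an arbitrary $\gamma\in\Gamma_{0}(N)$ and computing the $r_{\delta}$-weighted product of $24$th roots of unity directly; one finds $f^{[\gamma]_{k}}=\bigl(\tfrac{(-1)^{k}\prod_{\delta}\delta^{r_{\delta}}}{d}\bigr)\,f$, after which (iii) forces $k$ to be even and (ii) makes $\prod_{\delta}\delta^{r_{\delta}}$ a rational square, so the symbol is identically $1$. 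Second, since the entire computational content (the Dedekind-sum identities and the cusp-order formula) is delegated to exactly the references the paper already cites, your text is a correct proof outline rather than an independent proof --- which is acceptable here precisely because the paper itself treats the theorem as quoted, not proved.
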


\subsection{Convolution Sums $W_{(\alpha, \beta)}(n)$}
\label{convolutionSumsEqns}

Suppose that $\alpha,\beta\in\mathbb{N}$ are such that $\alpha\leq\beta$.  
The convolution sum, 
 $W_{(\alpha,\beta)}(n)$, is defined by \autoref{def-convolution_sum}. 

Now, suppose in addition that 
$\gcd{(\alpha,\beta)}= \delta >1$.
Therefore, there exist 
$\alpha_{1}, \beta_{1}\in\mathbb{N}$ such that 
$\gcd{(\alpha_{1},\beta_{1})}=1,\ \alpha=\delta\,\alpha_{1}$ and 
$\beta =\delta\,\beta_{1}$. Then 
\begin{equation}
W_{(\alpha,\beta)}(n) = \sum_{\substack{
{(l,k)\in\mathbb{N}_{0}^{2}} \\ {\alpha\,l+\beta\,k=n}
}} \sigma(l)\sigma(k)   
 =  \sum_{\substack{
{(l,k)\in\mathbb{N}_{0}^{2}} \\ {\delta\,\alpha_{1}\,l+\delta\,\beta_{1}\,k=n}
 }}\sigma(l)\sigma(k) 
= W_{(\alpha_{1},\beta_{1})}(\frac{n}{\delta}). 
\label{convolutionSumsEqns-gcd}
\end{equation}
Therefore, we may simply assume that $\gcd{(\alpha,\beta)}=1$ as does  
\aA\ et al.\ \cite{alaca_alaca_williams2006}. 
We apply the formula proved by \mB,\ \jwlG, and \sR\ \cite{besge,glaisher,ramanujan} to 
\autoref{convolutionSumsEqns-gcd} to deduce that 
\begin{equation} \label{convolutionSum-a-a-eqn}
\forall\,\alpha\in\mathbb{N}\quad 
W_{(\alpha,\alpha)}(n) = W_{(1,1)}(\frac{n}{\alpha}) = 
\frac{5}{12}\,\sigma_{3}(\frac{n}{\alpha}) + (\frac{1}{12} - \frac{1}{2\,\alpha}n)
\sigma(\frac{n}{\alpha}). 
\end{equation}

Let $q\in\mathbb{C}$ be such that $|q|<1$. 
The Eisenstein series $L(q)$ and $M(q)$ are defined as follows:
\begin{align}  
L(q) = E_{2}(q) = 1-24\,\sum_{n=1}^{\infty}\sigma(n)q^{n}, 
\label{evalConvolClass-eqn-3} \\
M(q) = E_{4}(q) = 1 + 240\,\sum_{n=1}^{\infty}\sigma_{3}(n)q^{n}. 
\label{evalConvolClass-eqn-4}
\end{align}
We state two relevant results for the sequel of this work. These two results 
generalize the extraction of the convolution sum to all natural numbers. 
\begin{lemma}  \label{evalConvolClass-lema-1}
Let $\alpha, \beta \in \mathbb{N}$. Then 
\begin{equation*}
( \alpha\, L(q^{\alpha}) - \beta\, L(q^{\beta}) )^{2}\in
\M_{4}(\Gamma_{0}(\alpha\beta)).
\end{equation*}
\end{lemma}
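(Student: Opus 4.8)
The plan is to show that $\alpha L(q^\alpha) - \beta L(q^\beta)$ is a modular form of weight $2$ on $\Gamma_0(\alpha\beta)$, so that its square lies in $\M_4(\Gamma_0(\alpha\beta))$; holomorphy and the behaviour at the cusps are inherited by the product, and the weight doubles. The starting point is the classical fact that the Eisenstein series $E_2(q)$ is a \emph{quasimodular} form of weight $2$: under $z\mapsto -1/z$ it transforms as $E_2(-1/z) = z^2 E_2(z) + \frac{12 z}{2\pi i}$ (equivalently $\frac{6}{\pi i}$ times something), and the anomalous term $\frac{1}{\pi i z}$-type correction is the obstruction to genuine modularity. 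The key elementary observation is that this correction is additive and independent of the level, so it cancels in any difference $\alpha E_2(q^\alpha) - \beta E_2(q^\beta)$ once the coefficients $\alpha,\beta$ are chosen to match the scaling. Concretely, one uses the standard lemma: for any $t\in\mathbb{N}$, the function $\phi_t(z) := t E_2(tz) - E_2(z)$ lies in $\M_2(\Gamma_0(t))$. This is recorded, e.g., in Stein's book and in Koblitz, and I would cite it rather than reprove it.

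First I would write $\alpha L(q^\alpha) - \beta L(q^\beta) = \bigl(\alpha E_2(\alpha z) - E_2(z)\bigr) - \bigl(\beta E_2(\beta z) - E_2(z)\bigr) = \phi_\alpha(z) - \phi_\beta(z)$, using $q = e^{2\pi i z}$ and $L = E_2$ from \autoref{evalConvolClass-eqn-3}. By the cited lemma, $\phi_\alpha \in \M_2(\Gamma_0(\alpha))$ and $\phi_\beta \in \M_2(\Gamma_0(\beta))$. Next I would invoke the elementary inclusion $\M_k(\Gamma_0(M)) \subseteq \M_k(\Gamma_0(N))$ whenever $M \mid N$, applied with $M = \alpha$, $N = \alpha\beta$ and with $M = \beta$, $N = \alpha\beta$ (note $\alpha \mid \alpha\beta$ and $\beta \mid \alpha\beta$ trivially, without any coprimality hypothesis — indeed the statement of \autoref{evalConvolClass-lema-1} does not even assume $\gcd(\alpha,\beta)=1$). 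Since $\M_2(\Gamma_0(\alpha\beta))$ is a $\mathbb{C}$-vector space, the difference $\phi_\alpha - \phi_\beta = \alpha L(q^\alpha) - \beta L(q^\beta)$ lies in $\M_2(\Gamma_0(\alpha\beta))$.

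Finally, $\M_k(\Gamma_0(N))$ is closed under multiplication in the graded sense: if $f \in \M_{k_1}(\Gamma_0(N))$ and $g \in \M_{k_2}(\Gamma_0(N))$ then $fg \in \M_{k_1+k_2}(\Gamma_0(N))$, because the weight-$k$ automorphy factor is multiplicative, holomorphy on $\mathbb{H}$ is preserved under products, and a product of forms holomorphic at every cusp is holomorphic at every cusp. Taking $f = g = \alpha L(q^\alpha) - \beta L(q^\beta) \in \M_2(\Gamma_0(\alpha\beta))$ gives $\bigl(\alpha L(q^\alpha) - \beta L(q^\beta)\bigr)^2 \in \M_4(\Gamma_0(\alpha\beta))$, which is the claim.

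The only genuine subtlety — the ``hard part'' — is the transformation law of $E_2$ and the resulting membership $t E_2(tz) - E_2(z) \in \M_2(\Gamma_0(t))$: this is the one place where the quasimodularity of $E_2$ does real work, since $E_2$ itself is \emph{not} a modular form, and one must check that the non-holomorphic/anomalous term genuinely cancels in the combination and that the result is holomorphic at the cusps (not merely meromorphic). I would handle this by citing the standard reference (Stein, \cite{wstein}, or Koblitz, \cite{koblitz-1993}) where $\phi_t \in \M_2(\Gamma_0(t))$ is proved, so that the proof here reduces to the bookkeeping of the two inclusions and the product rule described above. Everything else — the decomposition into $\phi_\alpha - \phi_\beta$, the level-raising inclusion, and the closure under products — is routine and needs no computation.
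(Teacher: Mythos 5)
Your proposal is correct and follows essentially the same route as the paper: both decompose $\alpha\, L(q^{\alpha}) - \beta\, L(q^{\beta})$ as a difference of two weight-$2$ forms of the shape $t\,E_{2}(tz)-E_{2}(z)$ (the paper writes them with the opposite sign as $L(q)-t\,L(q^{t})$), cite Stein for their membership in $\M_{2}(\Gamma_{0}(t))$, raise the level to $\alpha\beta$ by the divisibility inclusion, and square. The only cosmetic difference is that the paper also disposes of the trivial case $\alpha=\beta$ separately, which your argument covers anyway.
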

 \begin{proof} 
If $\alpha=\beta$, then trivially 
$0=(\alpha\, L(q^{\alpha}) - \alpha\,L(q^{\alpha}) )^{2}\in \M_{4}
(\Gamma_{0}(\alpha))$ and there is nothing to prove. 
Therefore, we may suppose that $\alpha\neq\beta>0$ 
in the sequel. 
We apply the result proved by W.~A.~Stein \cite[Thrms 5.8, 5.9, p.~86]{wstein} 
to deduce   
$L(q) - \alpha\, L(q^{\alpha})\in \M_{2}(\Gamma_{0}(\alpha))\subseteq 
\M_{2}(\Gamma_{0}(\alpha\beta))$ and  
$L(q) - \beta\, L(q^{\beta}) \in \M_{2}(\Gamma_{0}(\beta))\subseteq
\M_{2}(\Gamma_{0}(\alpha\beta))$. Therefore, 
$$ \alpha\, L(q^{\alpha}) - \beta\,
L(q^{\beta})= (L(q)-\beta\, L(q^{\beta}) ) - (L(q)-\alpha\, L(q^{\alpha})) 
\in \M_{2}(\Gamma_{0}(\alpha\beta))$$ and so 
$(\alpha\, L(q^{\alpha}) - \beta\, L(q^{\beta}) )^{2}\in
\M_{4}(\Gamma_{0}(\alpha\beta))$.
\end{proof}
\begin{theorem} \label{convolutionSum_a_b}
Let $\alpha,\beta\in\mathbb{N}$ be such that
$\alpha < \beta$, and $\alpha$ and $\beta$ are relatively prime. 
Then
\begin{multline}
 ( \alpha\, L(q^{\alpha}) - \beta\, L(q^{\beta}) )^{2}  = 
 (\alpha - \beta)^{2} 
    + \sum_{n=1}^{\infty}\biggl(\ 240\,\alpha^{2}\,\sigma_{3}
    (\frac{n}{\alpha}) 
    + 240\,\beta^{2}\,\sigma_{3}(\frac{n}{\beta}) \\ 
    + 48\,\alpha\,(\beta-6n)\,\sigma(\frac{n}{\alpha}) 
    + 48\,\beta\,(\alpha-6n)\,\sigma(\frac{n}{\beta}) 
    - 1152\,\alpha\beta\,W_{(\alpha,\beta)}(n)\,\biggr)q^{n}. 
    \label{evalConvolClass-eqn-11}
\end{multline}
\end{theorem}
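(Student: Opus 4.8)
The plan is to compute the $q$-expansion of the left-hand side directly, since by Lemma~\ref{evalConvolClass-lema-1} we already know it is a holomorphic object and the identity we want is just an equality of formal power series in $q$. First I would expand the square:
\begin{equation*}
( \alpha\, L(q^{\alpha}) - \beta\, L(q^{\beta}) )^{2}
= \alpha^{2} L(q^{\alpha})^{2} - 2\alpha\beta\, L(q^{\alpha})L(q^{\beta}) + \beta^{2} L(q^{\beta})^{2}.
\end{equation*}
So the computation reduces to understanding three building blocks: $L(q^{\alpha})^{2}$, $L(q^{\beta})^{2}$, and the mixed product $L(q^{\alpha})L(q^{\beta})$, each expanded via the defining series \autoref{evalConvolClass-eqn-3}.

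Next I would handle the "pure'' squares. Writing $L(q) = 1 - 24\sum_{n\ge 1}\sigma(n)q^{n}$ and squaring, the coefficient of $q^{n}$ in $L(q)^{2}$ involves $-48\,\sigma(n)$ from the cross terms with the constant $1$, plus $576\sum_{i+j=n}\sigma(i)\sigma(j) = 576\, W_{(1,1)}(n)$ from the product of the two sums. Then I invoke the classical Besge--Glaisher--Ramanujan evaluation \autoref{convolutionSum-a-a-eqn} (with $\alpha=1$) to replace $W_{(1,1)}(n)$ by $\tfrac{5}{12}\sigma_{3}(n) + (\tfrac{1}{12} - \tfrac{n}{2})\sigma(n)$, which collapses the expression into $1 - 48\sigma(n) + 576(\tfrac{5}{12}\sigma_{3}(n) + (\tfrac{1}{12}-\tfrac n2)\sigma(n)) = 1 + 240\,\sigma_{3}(n) + 288(1 - 6n)\sigma(n) - 48\sigma(n)$; regrouping gives $240\sigma_3(n) - 288\cdot 6\, n\,\sigma(n) + 240\sigma(n)$... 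I would simply carry out this bookkeeping carefully. Then $L(q^{\alpha})^{2}$ is obtained by the substitution $q \mapsto q^{\alpha}$, which replaces every $\sigma_{k}(m)q^{m}$ by $\sigma_{k}(m)q^{\alpha m}$; equivalently the coefficient of $q^{n}$ becomes the corresponding value at $n/\alpha$, using the convention $\sigma_{k}(x)=0$ for $x\notin\mathbb{N}$. Multiplying by $\alpha^{2}$ and doing the same for the $\beta$-term accounts for the $240\alpha^{2}\sigma_{3}(n/\alpha)$, $240\beta^{2}\sigma_{3}(n/\beta)$ terms and part of the linear-in-$n$ contributions.

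For the cross term $-2\alpha\beta\,L(q^{\alpha})L(q^{\beta})$, I expand
\begin{equation*}
L(q^{\alpha})L(q^{\beta}) = 1 - 24\sum_{n\ge1}\sigma(n)q^{\alpha n} - 24\sum_{n\ge1}\sigma(n)q^{\beta n} + 576\sum_{n\ge1}\Bigl(\sum_{\alpha i + \beta j = n}\sigma(i)\sigma(j)\Bigr)q^{n},
\end{equation*}
where the last inner sum is precisely $W_{(\alpha,\beta)}(n)$ by \autoref{def-convolution_sum}. Multiplying by $-2\alpha\beta$ produces the $-1152\,\alpha\beta\,W_{(\alpha,\beta)}(n)$ term, the constant contribution $-2\alpha\beta$ (which combines with $\alpha^{2}+\beta^{2}$ from the pure squares to give $(\alpha-\beta)^{2}$), and additional $\sigma(n/\alpha)$, $\sigma(n/\beta)$ contributions. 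Finally I would collect all coefficients of $q^{n}$ from the three pieces and verify they sum to $240\alpha^{2}\sigma_{3}(n/\alpha) + 240\beta^{2}\sigma_{3}(n/\beta) + 48\alpha(\beta-6n)\sigma(n/\alpha) + 48\beta(\alpha-6n)\sigma(n/\beta) - 1152\alpha\beta\,W_{(\alpha,\beta)}(n)$, matching the claim.

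**The main obstacle** is purely the bookkeeping: correctly tracking the coefficient of $\sigma(n/\alpha)$, which receives contributions both from the $-48\sigma$ term inside $\alpha^{2}L(q^{\alpha})^{2}$ (after using the $W_{(1,1)}$ evaluation, the relevant coefficient there is $\alpha^{2}\cdot(-48)$ minus the $n$-dependent piece $288\cdot 6\,(n/\alpha)\,\sigma(n/\alpha)\cdot\alpha^{2} = 288\cdot 6\,\alpha n\,\sigma(n/\alpha)$) and from the $-2\alpha\beta\cdot(-24)\sigma(n/\alpha)=48\alpha\beta\,\sigma(n/\alpha)$ term in the cross product; adding $48\alpha\beta - 48\alpha^{2}\cdot$(something) and reconciling with $48\alpha(\beta - 6n) = 48\alpha\beta - 288\alpha n$ requires the $\alpha^{2}$-coefficient of the linear term to work out exactly — it does, because the $W_{(1,1)}$ formula's linear coefficient $-\tfrac12$ scales as $\tfrac{n}{\alpha}$ inside and $\alpha^{2}$ outside. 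I should state this once cleanly rather than belabor it; assuming $\alpha\neq\beta$ (the case $\alpha=\beta$ being excluded by the hypothesis $\alpha<\beta$) keeps the cross term genuinely off-diagonal, though the identity in fact holds formally regardless.
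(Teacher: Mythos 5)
Your proposal is correct and follows essentially the same route as the paper: expand the square into $\alpha^{2}L^{2}(q^{\alpha})+\beta^{2}L^{2}(q^{\beta})-2\alpha\beta\,L(q^{\alpha})L(q^{\beta})$, use Glaisher's identity for the pure squares (which you re-derive from the Besge--Glaisher--Ramanujan evaluation of $W_{(1,1)}$ rather than citing it, a cosmetic difference), identify the cross term with $W_{(\alpha,\beta)}(n)$, and collect coefficients. The only blemish is an arithmetic slip in your intermediate regrouping ($576(\tfrac{1}{12}-\tfrac{n}{2})\sigma(n)=48(1-6n)\sigma(n)$, not $288(1-6n)\sigma(n)$), but since you flag that step as bookkeeping to be carried out carefully and the final collected coefficients $48\alpha(\beta-6n)$ are right, the argument stands.
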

\begin{proof} 
We first observe that 
\begin{multline}
( \alpha\, L(q^{\alpha}) - \beta\, L(q^{\beta}) )^{2}  = 
\alpha^{2}\, L^{2}(q^{\alpha}) + \beta^{2}\,
             L^{2}(q^{\beta}) - 2\,\alpha\beta\, 
             L(q^{\alpha})L(q^{\beta}).
             \label{evalConvolClass-eqn-7}
\end{multline}
J.~W.~L.~Glaisher \cite{glaisher} has proved the following identity  
\begin{equation}
L^{2}(q) = 1+\sum_{n=1}^{\infty}\biggl( 240\,\sigma_{3}(n) - 288\, n\,\sigma(n) \biggr)
q^{n} \label{evalConvolClass-eqn-5}
\end{equation}
which we apply to deduce 
\begin{equation}
L^{2}(q^{\alpha}) = 1+\sum_{n=1}^{\infty}\bigl( 240\,\sigma_{3}(\frac{n}{\alpha}) -
288\,\frac{n}{\alpha}\,\sigma(\frac{n}{\alpha})\bigr) q^{n} \label{evalConvolClass-eqn-8}
\end{equation}
and 
\begin{equation}
L^{2}(q^{\beta}) = 1+\sum_{n=1}^{\infty}\bigl( 240\,\sigma_{3}(\frac{n}{\beta}) -
288\,\frac{n}{\beta}\,\sigma(\frac{n}{\beta})\bigr)q^{n}. \label{evalConvolClass-eqn-9}
\end{equation}
Since  
\begin{multline*}
\bigl(\sum_{n=1}^{\infty}\sigma(\frac{n}{\alpha})q^{n}\bigr)\bigl(\sum_{n=1}^{\infty}\sigma(\frac{n}{\beta})q^{n}\bigr)
  =
\sum_{n=1}^{\infty}\bigl(\sum_{\alpha k + \beta l=n}\sigma(k)\sigma(l)\,\bigr)q^{n}
  = \sum_{n=1}^{\infty}W_{(\alpha,\beta)}(n)q^{n},
\end{multline*}
we conclude, when using the accordingly modified  
\autoref{evalConvolClass-eqn-3}, that 
\begin{equation}
L(q^{\alpha})L(q^{\beta}) = 1 - 24\,\sum_{n=1}^{\infty}\sigma(\frac{n}{\alpha})q^{n} -
24\,\sum_{n=1}^{\infty}\sigma(\frac{n}{\beta})q^{n} + 576\,\sum_{n=1}^{\infty}W_{(\alpha,\beta)}(n)q^{n}.\label{evalConvolClass-eqn-10}
\end{equation}
Therefore, 
\begin{multline*}
\bigl( \alpha\, L(q^{\alpha}) - \beta\, L(q^{\beta})\bigr)^{2} 
   = (\alpha - \beta)^{2} + \sum_{n=1}^{\infty}\biggl(\ 240\,\alpha^{2}\,\sigma_{3}(\frac{n}{\alpha}) 
    + 240\,\beta^{2}\,\sigma_{3}(\frac{n}{\beta}) \\ 
    +  48\,\alpha\,(\beta-6n)\,\sigma(\frac{n}{\alpha})  
       + 48\,\beta\,(\alpha-6n)\,\sigma(\frac{n}{\beta}) 
     - 1152\,\alpha\beta\, W_{(\alpha,\beta)}(n)\ \biggr)q^{n}  
\end{multline*}
as asserted. 
\end{proof}


\section{Evaluating  $W_{(\alpha,\beta)}(n)$ for a class of natural numbers 
$\alpha\beta$}
\label{convolution_alpha_beta}

Suppose that $\alpha$ and $\beta$ are positive integers which satisfy the
following two conditions: 
\begin{enumerate}
  \item[(i)] $\gcd{(\alpha,\beta)}=1$ 
 \item[(ii)] \autoref{class1-conditions}.
\end{enumerate}
We derive the formula for the convolution sum $W_{(\alpha,\beta)}(n)$ for all 
such $\alpha$ and $\beta$. Let in the sequel 
$D(\alpha\beta)$ denote the set of all positive divisors of $\alpha\beta$.

\subsection{Bases for $\E_{4}(\Gamma_{0}(\alpha\beta))$ and $\S_{4}(\Gamma_{0}(\alpha\beta))$}  \label{convolution_alpha_beta-bases}

The existence of a basis of the space of cusp forms of weight $2\leq k$ even for 
$\Gamma_{0}(\alpha\beta)$ when $\alpha\beta$ 
is not a perfect square is discussed by \aP\ \cite{pizer1976}. 
We recall that the Dirichlet character $\chi$ is assumed to be trivial, that 
is $\chi=1$. 

According to the dimension formulae in 
\tM 's book \cite[Thrm 2.5.2,~p.~60]{miyake1989} or 
\cite[Prop.~6.1, p.~91]{wstein}, 
\begin{itemize}
\item and in addition due to the special form of $\alpha\beta$, we deduce that  
\begin{equation} \label{dimension-Eisenstein}
\text{dim}(\E_{4}(\Gamma_{0}(\alpha\beta)))=\underset{\delta|\alpha\beta}{\sum}\varphi
(\gcd (\delta,\frac{\alpha\beta}{\delta}))=\underset{\delta|\alpha\beta}{\sum}1 = \sigma_{0}(\alpha\beta)=d(\alpha\beta),
\end{equation} 
where $\varphi$ is the Euler's totient function.
\item we may assume that 
$\text{dim}(\S_{4}(\Gamma_{0}(\alpha\beta)))=m_{S}\in\mathbb{N}$. 
\end{itemize}
To determine as many elements of $\S_{4}(\Gamma_{0}(\alpha\beta))$ as possible for an 
explicitly given $\alpha\beta$, we use an exhaustive search when we apply  \autoref{ligozat_theorem}. 
We select from these determined elements of the space 
$\S_{4}(\Gamma_{0}(\alpha\beta))$ relevant ones for the 
purpose of the determination of a basis of this space. 

The so-determined basis of the vector space of cusp forms is not unique. 
However, due to the change of basis which is an automorphism, it is sufficient 
to only consider this basis for our purpose. 

\begin{theorem} \label{basisCusp_a_b}
\begin{enumerate}
\item[\textbf{(a)}] The set $\EuScript{B}_{E}=\{ M(q^{t})\,\mid\, t\in D(\alpha\beta)\,\}$ is a basis of $\E_{4}(\Gamma_{0}(\alpha\beta))$.
\item[\textbf{(b)}] Let $1\leq i\leq m_{S}$ be positive integers, 
$\delta\in D(\alpha\beta)$ and $(r(i,\delta))_{i,\delta}$ be a 
table of the powers of\  $\eta(\delta z)$. Let furthermore  
$\EuFrak{B}_{\alpha\beta,i}(q)=\underset{\delta|\alpha\beta}{\prod}\eta^{r(i,\delta)}(\delta
z)$ 
be selected elements of $\S_{4}(\Gamma_{0}(\alpha\beta))$. Then 
the set $\EuScript{B}_{S}=\{\, \EuFrak{B}_{\alpha\beta,i}(q)\,\mid\,1\leq i\leq m_{S}\,\}$ 
is a basis of $\S_{4}(\Gamma_{0}(\alpha\beta))$.
\item[\textbf{(c)}] The set 
$\EuScript{B}_{M}=\EuScript{B}_{E}\cup\EuScript{B}_{S}$  
constitutes a basis of $\M_{4}(\Gamma_{0}(\alpha\beta))$. 
\end{enumerate}
\end{theorem}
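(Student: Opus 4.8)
The plan is to prove parts \textbf{(a)}, \textbf{(b)}, \textbf{(c)} in turn, in each case exhibiting a set of the correct cardinality, checking that its members lie in the space in question, and then establishing linear independence by inspecting $q$-expansions. The cardinalities are supplied by \autoref{dimension-Eisenstein}, by the standing assumption $\text{dim}(\S_{4}(\Gamma_{0}(\alpha\beta)))=m_{S}$, and by the decomposition $\M_{4}(\Gamma_{0}(\alpha\beta))=\E_{4}(\Gamma_{0}(\alpha\beta))\oplus\S_{4}(\Gamma_{0}(\alpha\beta))$ recalled in \autoref{modularForms}.

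For part \textbf{(a)}, I would first note that $M(q)=E_{4}(q)$ is an Eisenstein form of weight $4$ on $\text{SL}_{2}(\mathbb{Z})$, and that for each $t\in D(\alpha\beta)$ the substitution $q\mapsto q^{t}$ carries it to $M(q^{t})\in\E_{4}(\Gamma_{0}(t))\subseteq\E_{4}(\Gamma_{0}(\alpha\beta))$; this is the weight-$4$, trivial-character case of Theorems~5.8 and 5.9 of \cite{wstein}, which the text has already observed to apply here, and it is the same mechanism used in \autoref{evalConvolClass-lema-1}. Hence $\EuScript{B}_{E}$ is a subset of $\E_{4}(\Gamma_{0}(\alpha\beta))$ of size $|D(\alpha\beta)|=d(\alpha\beta)=\text{dim}(\E_{4}(\Gamma_{0}(\alpha\beta)))$ by \autoref{dimension-Eisenstein}, so only linear independence remains. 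Writing the divisors of $\alpha\beta$ as $1=t_{1}<t_{2}<\dots<t_{d}=\alpha\beta$ and using $M(q^{t})=1+240\sum_{m\geq 1}\sigma_{3}(m)q^{tm}$ from \autoref{evalConvolClass-eqn-4}, a relation $\sum_{j=1}^{d}c_{j}M(q^{t_{j}})=0$ forces the coefficient of $q^{t_{j}}$, namely $240\sum_{t_{i}\mid t_{j}}c_{i}\,\sigma_{3}(t_{j}/t_{i})$, to vanish; since every divisor $t_{i}$ of $t_{j}$ with $t_{i}<t_{j}$ appears earlier in the list, an induction on $j$ with vacuous base case $j=1$ gives $c_{j}=0$ for all $j$. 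Thus $\EuScript{B}_{E}$ is a basis.

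For part \textbf{(b)}, I would observe that each $\EuFrak{B}_{\alpha\beta,i}(q)=\prod_{\delta\mid\alpha\beta}\eta^{r(i,\delta)}(\delta z)$ is, by the way it was selected, an eta quotient whose exponents $r(i,\delta)$ satisfy conditions \textbf{(i)}, \textbf{(ii)}, \textbf{(iii)} and \textbf{(iv$'$)} of \autoref{ligozat_theorem} together with $\frac{1}{2}\sum_{\delta}r(i,\delta)=4$, so $\EuFrak{B}_{\alpha\beta,i}\in\S_{4}(\Gamma_{0}(\alpha\beta))$. Since $\EuScript{B}_{S}$ has $m_{S}=\text{dim}(\S_{4}(\Gamma_{0}(\alpha\beta)))$ members, it again suffices to prove linear independence, and this is exactly what the exhaustive search is arranged to secure: the selected eta quotients can be chosen so that their orders of vanishing at the cusp $\infty$, namely $\frac{1}{24}\sum_{\delta\mid\alpha\beta}\delta\,r(i,\delta)$, are pairwise distinct, whence the nonzero expansions $\EuFrak{B}_{\alpha\beta,i}(q)$ begin at pairwise distinct powers of $q$ and no nontrivial $\mathbb{C}$-linear relation can hold. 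If distinct leading orders are not available for some $\alpha\beta$, one instead verifies that the $m_{S}\times m_{S}$ matrix of the first several Fourier coefficients of the $\EuFrak{B}_{\alpha\beta,i}$ has nonzero determinant; this is what is carried out in the worked examples. Hence $\EuScript{B}_{S}$ is a basis of $\S_{4}(\Gamma_{0}(\alpha\beta))$.

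For part \textbf{(c)}, I would invoke the direct sum $\M_{4}(\Gamma_{0}(\alpha\beta))=\E_{4}(\Gamma_{0}(\alpha\beta))\oplus\S_{4}(\Gamma_{0}(\alpha\beta))$: the union of a basis of the first summand with a basis of the second is a basis of the ambient space, and $\EuScript{B}_{E}\cap\EuScript{B}_{S}=\emptyset$ because every $M(q^{t})$ has constant term $1$ while every element of $\S_{4}(\Gamma_{0}(\alpha\beta))$ has constant term $0$. By \textbf{(a)} and \textbf{(b)}, $\EuScript{B}_{M}=\EuScript{B}_{E}\cup\EuScript{B}_{S}$ is therefore a basis of $\M_{4}(\Gamma_{0}(\alpha\beta))$, of cardinality $d(\alpha\beta)+m_{S}$. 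The hard part is really part \textbf{(b)}: in the statement as phrased the linear independence of the chosen eta quotients is absorbed into the selection procedure rather than deduced from a single a priori argument, and for each explicit level $\alpha\beta$ it must be confirmed by an actual rank computation on initial Fourier coefficients; parts \textbf{(a)} and \textbf{(c)} are then routine bookkeeping once the Eisenstein dimension formula and the Eisenstein--cusp decomposition are in hand.
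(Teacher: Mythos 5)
Your proposal is correct and takes essentially the same route as the paper: for \textbf{(a)} an induction over the size-ordered divisors comparing coefficients of $q^{t_j}$, for \textbf{(b)} the dimension count $m_{S}$ combined with distinct leading orders of $q$ (the paper's Case 1, triangular matrix with $1$'s on the diagonal) falling back to a nonzero-determinant check on initial Fourier coefficients (the paper's Case 2), and for \textbf{(c)} the direct sum $\M_{4}(\Gamma_{0}(\alpha\beta))=\E_{4}(\Gamma_{0}(\alpha\beta))\oplus\S_{4}(\Gamma_{0}(\alpha\beta))$. Your closing remark that the linear independence in \textbf{(b)} is ultimately secured by the selection procedure and a rank computation at each explicit level accurately reflects what the paper does.
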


\begin{remark} \label{basis-remark}
\begin{enumerate}
\item[\textbf{(r1)}] For each $1\leq i\leq m_{S}$ the eta quotient 
$\EuFrak{B}_{\alpha\beta,i}(q)$ can be expressed in the form 
$\underset{n=1}{\overset{\infty}{\sum}}\EuFrak{b}_{\alpha\beta,i}(n)q^{n}$, where 
 for each $n\geq 1$ the $\EuFrak{b}_{\alpha\beta,i}(n)$ are integers. 
\item[\textbf{(r2)}] 
If we divide the summation that results from \autoref{ligozat_theorem} $(iv')$ 
when we have set $d=N$ by $24$, then we obtain a positive integer which is 
the smallest degree of $q$ in $\EuFrak{B}_{\alpha\beta,i}(q)$.
\item[\textbf{(r3)}] The proof of  \autoref{basisCusp_a_b} (b) 
provides a method to effectively determine a basis of the space of cusp forms, especially 
when the level $\alpha\beta$ is large.  
\end{enumerate}
\end{remark}

\begin{proof} 
\begin{enumerate}
\item[(a)] By Theorem 5.8 in Section 5.3 of \waS\ \cite[p.~86]{wstein}
each $M(q^{t})$ is in $\M_{4}(\Gamma_{0}(t))$, where $t\in D(\alpha\beta)$. 
Since $\E_{4}(\Gamma_{0}(\alpha\beta))$ has a finite dimension, it suffices 
to show that $M(q^{t})$ with $t\in D(\alpha\beta)$ are linearly independent. 
Suppose that $x_{t}\in\mathbb{C}$ with $t\in D(\alpha\beta)$. 


We prove this by induction on the elements of the set $D(\alpha\beta)$ which is assumed to be linearly ordered. 

The case $t=1\in D(\alpha\beta)$ is obvious since comparing the coefficients of 
$q^{t}$ on both sides of the equation $x_{t}\,M(q^{t})=0$ clearly gives $x_{t}=0$.

Suppose now that the cardinality of the set $D(\alpha\beta)$ is greater than $1$ and that $M(q^{t})$ 
are linearly independent for all $t\in D(\alpha\beta)$ such that $t\leq t_{1}$ for a given $t_{1}$ with $1<t_{1}<\alpha\beta$. Let $C$ be 
the proper non-empty subset of $D(\alpha\beta)$ which contains all positive divisors of $\alpha\beta$ less than or equal 
to $t_{1}$.  Note that all positive divisors of $t_{1}$ constitute a subset of $C$ and observe that each $t\in D(\alpha\beta)$ belongs to the class of positive integers defined by \autoref{class1-conditions}. Let us consider 
the non-empty subset $C\cup\{t'\}$ of $D(\alpha\beta)$, wherein $t'$ is the next ascendant element of $D(\alpha\beta)$ 
which is greater than $t_{1}$ the greatest element of the set $C$. Then 
\begin{equation*}
	\underset{t\in C\cup\{t'\}}{\sum}x_{t}\,M(q^{t})= \underset{t\in C}{\sum}x_{t}\,M(q^{t})+x_{t'}\,M(q^{t'})= 0.
\end{equation*}
By the induction hypothesis it holds that $x_{t}=0$ for all $t\in C$. So, we obtain from the above equation 
that $x_{t'}=0$ when we compare the coefficient of $q^{t'}$ on both sides of the equation. 

Hence, the solution of the homogeneous system of $d(\alpha\beta)$ linear equations 
is $x_{t}=0$ for all $t\in D(\alpha\beta)$. Therefore, the set 
$\EuScript{B}_{E}$ is linearly independent and hence 
is a basis of $\E_{4}(\Gamma_{0}(\alpha\beta))$.

\item[(b)]  
Since $\EuFrak{B}_{\alpha\beta,i}(q)$ with $1\leq i\leq m_{S}$ are obtained 
from an exhaustive search using \autoref{ligozat_theorem} $(i)-(iv')$, 
it holds that each $\EuFrak{B}_{\alpha\beta,i}(q)$ 
is in the space $\S_{4}(\Gamma_{0}(\alpha\beta))$.

Since the dimension of $\S_{4}(\Gamma_{0}(\alpha\beta))$ is 
$m_{S}\in\mathbb{N}$, it is
sufficient to show that the set $\{ \EuFrak{B}_{\alpha\beta,i}(q)\mid 1\leq i\leq m_{S}\}$ 
is linearly independent. 
Suppose that $x_{i}\in\mathbb{C}$ and 
$\underset{i=1}{\overset{m_{S}}{\sum}}x_{i}\,\EuFrak{B}_{\alpha\beta,i}(q)=0$. Then   
\begin{equation*}
\underset{i=1}{\overset{m_{S}}{\sum}}x_{i}\,\EuFrak{B}_{\alpha\beta,i}(q)
= \underset{n=1}{\overset{\infty}{\sum}}(\,\underset{i=1}{\overset{m_{S}}{\sum}}x_{i}\,\EuFrak{b}_{\alpha\beta,i}(n)\,)q^{n} = 0
\end{equation*}
which gives the following homogeneous system of $m_{S}$ linear equations  
in $m_{S}$ unknowns
\begin{equation}  \label{basis-cusp-eqn-sol}
\underset{i=1}{\overset{m_{S}}{\sum}}\EuFrak{b}_{\alpha\beta,i}(n)\,x_{i}= 0,\qquad 
1\leq n\leq m_{S}.
\end{equation}
By \hyperref[basis-remark]{Remark \ref*{basis-remark}} \textbf{(r2)} we may consider 
without lost of generality two cases.
\begin{description}
  \item[Case 1] For each $1\leq i\leq m_{S}$ the smallest degree of $q$ 
      in $\EuFrak{B}_{\alpha\beta,i}(q)$ is $i$. 
      It is 
      then obvious that the $m_{S}\times m_{S}$ matrix which corresponds to 
      this homogeneous system of linear equations is triangular with $1$'s on the 
      diagonal. Hence, the determinant of that matrix is $1$ and so 
      $x_{i}=0$ for all $1\leq i\leq m_{S}$. 
  \item[Case 2] The set $\EuScript{B}_{S}$ does contain a subset, say 
    $\EuScript{B}_{S}'=\{\,\EuFrak{B}_{\alpha\beta,i}(q)\,\mid\,1\leq i\leq u\,\}$ for 
    some $1\leq u< m_{s}$, for which the smallest degree of $q$ 
    in $\EuFrak{B}_{\alpha\beta,i}(q)$ is $i$. Since the smallest degree of 
    $q$ is $1$, 
    $\EuScript{B}_{S}'$ is not empty. 
    Let us consider $\EuScript{B}_{S}$ as an 
    ordered set of the form $\EuScript{B}_{S}'\cup\EuScript{B}_{S}''$, where 
    $\EuScript{B}_{S}''=\{\,\EuFrak{B}_{\alpha\beta,i}(q)\,\mid\,u< i\leq
    m_{S}\,\}$. Let $A=(\EuFrak{b}_{\alpha\beta,i}(n))$ be the 
    $m_{S}\times m_{S}$ matrix 
    in \hyperref[basis-cusp-eqn-sol]{Equation \ref*{basis-cusp-eqn-sol}}. 
    In this matrix $i$ indicates the $i$-th column and $n$ indicates 
    the $n$-th row. Note that applying case 1 the subset $\EuScript{B}_{S}'$ is 
    linearly independent since the determinant of the corresponding $u\times u$ 
    matrix is $1$.
    
    If $\text{det}(A)\neq 0$, then $x_{i}=0$ for all $1\leq i\leq m_{S}$. 
    Suppose now that $\text{det}(A)= 0$. Then for 
    some $u< k\leq m_{S}$ there exists $\EuFrak{B}_{\alpha\beta,k}(q)$ which is causing 
    the system of linear equations to be 
    inconsistent. We substitute $\EuFrak{B}_{\alpha\beta,k}(q)$ with, say 
    $\EuFrak{B}_{\alpha\beta,k}'(q)$, which does not occur in $\EuScript{B}_{S}$ and 
    compute the determinant of the new matrix $A$. 
    Since there are finitely many 
    $\EuFrak{B}_{\alpha\beta,k}(q)$ with $u< k\leq m_{S}$ that may cause the 
    system of linear equations to be inconsistent and finitely many elements of 
    $\S_{4}(\Gamma_{0}(\alpha\beta))\setminus \EuScript{B}_{S}$, 
    the procedure will terminate with a consistent system of linear equations. 
    So, $x_{i}=0$ for all $1\leq i\leq m_{S}$.
\end{description}
Therefore, the set $\{ \EuFrak{B}_{\alpha\beta,i}(q)\mid 1\leq i\leq m_{S}\}$ 
is linearly independent and so is a basis of $\S_{4}(\Gamma_{0}(\alpha\beta))$.

\item[(c)] Since $\M_{4}(\Gamma_{0}(\alpha\beta))=\E_{4}(\Gamma_{0}(\alpha\beta))\oplus 
\S_{4}(\Gamma_{0}(\alpha\beta))$, the result follows from (a) and (b).
\end{enumerate}
\end{proof}

The proof of \autoref{basisCusp_a_b}(b) provides an effective method to 
determine the basis of the space of cusp forms of level $\alpha\beta$ whenever 
$\alpha\beta$ belongs to this class of positive integers.

\subsection{Evaluating the convolution sum $W_{(\alpha,\beta)}(n)$}
\label{convolution_alpha_beta-gen}

\begin{lemma} \label{convolution-lemma_a_b}
Let $\alpha,\beta\in\mathbb{N}$ be such that $\gcd{(\alpha,\beta)}=1$. 
Let furthermore $\EuScript{B}_{M}=\EuScript{B}_{E}\cup\EuScript{B}_{S}$ be a 
basis of $\M(\Gamma_{0}(\alpha\beta))$. Then there exist
$X_{\delta}\in\mathbb{C}$ and $Y_{j}\in\mathbb{C}$ with $\delta\in D(\alpha\beta)$
and $1\leq j\leq m_{S}$ such that 
\begin{multline}
(\alpha\, L(q^{\alpha}) - \beta\, L(q^{\beta}))^{2} 
 = \sum_{\delta|\alpha\beta}X_{\delta} + \sum_{n=1}^{\infty}\biggl(\, 
 240\,\sum_{\delta|\alpha\beta}\,\sigma_{3}(\frac{n}{\delta})\,X_{\delta} 
 + \sum_{j=1}^{m_{S}}\,\EuFrak{b}_{\alpha\beta,j}(n)\,Y_{j}\, \biggr)q^{n}.
\label{convolution_a_b-eqn-0}
\end{multline}
\end{lemma}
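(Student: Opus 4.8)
The plan is to combine Lemma \ref{evalConvolClass-lema-1} with the basis provided by Theorem \ref{basisCusp_a_b}. By Lemma \ref{evalConvolClass-lema-1} we know that $(\alpha\, L(q^{\alpha}) - \beta\, L(q^{\beta}))^{2}$ is an element of $\M_{4}(\Gamma_{0}(\alpha\beta))$, and by Theorem \ref{basisCusp_a_b}(c) the set $\EuScript{B}_{M}=\EuScript{B}_{E}\cup\EuScript{B}_{S}$ is a basis of this space. Hence there are uniquely determined scalars in $\mathbb{C}$, which I will name $X_{\delta}$ (indexed by $\delta\in D(\alpha\beta)$, attached to the Eisenstein basis elements $M(q^{\delta})\in\EuScript{B}_{E}$) and $Y_{j}$ (indexed by $1\leq j\leq m_{S}$, attached to the cusp basis elements $\EuFrak{B}_{\alpha\beta,j}(q)\in\EuScript{B}_{S}$), such that
\begin{equation*}
(\alpha\, L(q^{\alpha}) - \beta\, L(q^{\beta}))^{2} = \sum_{\delta|\alpha\beta}X_{\delta}\,M(q^{\delta}) + \sum_{j=1}^{m_{S}}Y_{j}\,\EuFrak{B}_{\alpha\beta,j}(q).
\end{equation*}

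The next step is purely to read off the $q$-expansions of the basis elements. Using the definition \autoref{evalConvolClass-eqn-4} of $M(q)$, we have $M(q^{\delta}) = 1 + 240\sum_{n=1}^{\infty}\sigma_{3}(n)q^{\delta n} = 1 + 240\sum_{n=1}^{\infty}\sigma_{3}(\frac{n}{\delta})q^{n}$, where the convention $\sigma_{3}(\frac{n}{\delta})=0$ when $\delta\nmid n$ is in force. Summing over $\delta\in D(\alpha\beta)$ gives the constant term $\sum_{\delta|\alpha\beta}X_{\delta}$ and the Eisenstein contribution $240\sum_{n=1}^{\infty}\bigl(\sum_{\delta|\alpha\beta}\sigma_{3}(\frac{n}{\delta})X_{\delta}\bigr)q^{n}$ to the coefficient of $q^{n}$. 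For the cusp part, by Remark \ref{basis-remark}(r1) each $\EuFrak{B}_{\alpha\beta,j}(q) = \sum_{n=1}^{\infty}\EuFrak{b}_{\alpha\beta,j}(n)q^{n}$ has no constant term, so it contributes only $\sum_{j=1}^{m_{S}}\EuFrak{b}_{\alpha\beta,j}(n)Y_{j}$ to the coefficient of $q^{n}$. Collecting the constant term and the $q^{n}$-coefficients yields exactly \autoref{convolution_a_b-eqn-0}.

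There is essentially no hard obstacle here: the statement is a direct consequence of the decomposition $\M_{4}(\Gamma_{0}(\alpha\beta)) = \E_{4}(\Gamma_{0}(\alpha\beta))\oplus\S_{4}(\Gamma_{0}(\alpha\beta))$ together with the explicit bases already established. The only point requiring mild care is bookkeeping: making sure that the membership hypothesis of Lemma \ref{evalConvolClass-lema-1} is satisfied (it holds for all $\alpha,\beta\in\mathbb{N}$, so in particular under $\gcd(\alpha,\beta)=1$), and that the $q$-expansion conventions for $\sigma_{3}(\frac{n}{\delta})$ and for the vanishing constant term of cusp forms are applied consistently. One could also remark that the coefficients $X_{\delta}$ and $Y_{j}$, though introduced abstractly, are in fact determined by finitely many linear equations obtained from matching the first $d(\alpha\beta)+m_{S}$ coefficients of the $q$-expansion, which is how they are computed in the explicit examples later in the paper; but this is not needed for the statement of the lemma itself.
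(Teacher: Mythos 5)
Your proposal is correct and follows essentially the same route as the paper: invoke Lemma \ref{evalConvolClass-lema-1} for membership in $\M_{4}(\Gamma_{0}(\alpha\beta))$, expand in the basis $\EuScript{B}_{M}$ from Theorem \ref{basisCusp_a_b}(c), and read off the $q$-expansions of $M(q^{\delta})$ and $\EuFrak{B}_{\alpha\beta,j}(q)$. The paper's proof additionally records how the values of $X_{\delta}$ and $Y_{j}$ are pinned down by comparing coefficients with \autoref{evalConvolClass-eqn-11} and solving the resulting $d(\alpha\beta)+m_{S}$ linear equations (since those values are used in the following theorem), which you correctly note as an optional addendum not needed for the existence claim itself.
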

\begin{proof} That $( \alpha L(q^{\alpha}) - \beta L(q^{\beta}) )^{2}\in
\M_{4}(\Gamma_{0}(\alpha\beta))$ follows from 
\hyperref[evalConvolClass-lema-1]{Lemma \ref*{evalConvolClass-lema-1}}.  
Hence, by \autoref{basisCusp_a_b}\,(c), there exist 
$X_{\delta},Y_{j}\in\mathbb{C}, 
1\leq j\leq m_{S}\text{ and } \delta\in D(\alpha\beta)$, such that  
\begin{align}
( \alpha L(q^{\alpha}) - \beta L(q^{\beta}) )^{2}    &
= \sum_{\delta|\alpha\beta}X_{\delta}\,M(q^{\delta}) + \sum_{j=1}^{m_{S}}\,Y_{j}\ \EuFrak{B}_{\alpha\beta,j}(q)   \notag  \\ &
= \sum_{\delta|\alpha\beta}X_{\delta} + \sum_{n=1}^{\infty}\biggl(\, 
 240\,\sum_{\delta|\alpha\beta}\,\sigma_{3}(\frac{n}{\delta})\,X_{\delta} + 
 \sum_{j=1}^{m_{S}}\,\EuFrak{b}_{\alpha\beta,j}(n)\,Y_{j}\, \biggr)q^{n}. 
\label{convolution_a_b-eqn-0a}
\end{align} 
We compare the coefficients of $q^{n}$ on the right hand side of 
\autoref{convolution_a_b-eqn-0a} with that on the right hand side of   
\autoref{evalConvolClass-eqn-11}, 
which yields 
\begin{multline*}
\sum_{n=1}^{\infty}\biggl(\, 
 240\,\sum_{\delta|\alpha\beta}\,\sigma_{3}(\frac{n}{\delta})\,X_{\delta} + 
 \sum_{j=1}^{m_{S}}\,\EuFrak{b}_{\alpha\beta,j}(n)\,Y_{j}\, \biggr)q^{n} = \sum_{n=1}^{\infty}\biggl(\ 240\,\alpha^{2}\,\sigma_{3}
    (\frac{n}{\alpha}) 
    + 240\,\beta^{2}\,\sigma_{3}(\frac{n}{\beta}) \\
    + 48\,\alpha\,(\beta-6\,n)\,\sigma(\frac{n}{\alpha})  
    + 48\,\beta\,(\alpha-6\,n)\,\sigma(\frac{n}{\beta})   
    - 1152\,\alpha\,\beta\,W_{(\alpha,\beta)}(n)\,\biggr)q^{n}. 
\end{multline*}
We then take the coefficients of $q^{n}$ for which $n$ is in $D(\alpha\beta)$
and $1\leq n\leq m_{S}$, but as many as the unknowns $X_{\delta}$ and $Y_{j}$.
This results in  
a system of $d(\alpha\beta) + m_{S}$ linear equations whose unique solution determines the values of 
the unknown $X_{\delta}$ for all $\delta\in D(\alpha\beta)$ and the values 
of the unknown $Y_{j}$ for all $1\leq j\leq m_{S}$. 
Hence, we obtain the stated result.
\end{proof}
In the following theorem, let $X_{\delta}$ and $Y_{j}$ stand for their values 
obtained in the previous theorem.
\begin{theorem} \label{convolution_a_b}
Let $n$ be a positive integer. Then 
\begin{align*}
 W_{(\alpha,\beta)}(n)  =  &  
 -\frac{5}{24\,\alpha\,\beta}\,\sum_{\substack{{\delta|
 \alpha\beta}\\{\delta\neq\alpha,\beta}}}X_{\delta}\,\sigma_{3}(\frac{n}{\delta}) 
 + \frac{5}{24\,\alpha\,\beta}\,(\alpha^{2} - X_{\alpha})\,\sigma_{3}(\frac{n}{\alpha})  \\ &
 + \frac{5}{24\,\alpha\,\beta}\,(\beta^{2} - X_{\beta})\,\sigma_{3}(\frac{n}{\beta})  
 - \sum_{j=1}^{m_{S}}\,\frac{1}{1152\,\alpha\,\beta}\,Y_{j}\,\EuFrak{b}_{\alpha\beta,j}(n) \\ &
 + (\frac{1}{24}-\frac{1}{4\beta}n)\sigma(\frac{n}{\alpha}) 
 + (\frac{1}{24}-\frac{1}{4\alpha}n)\sigma(\frac{n}{\beta}).  
\end{align*}
\end{theorem}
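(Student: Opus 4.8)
The plan is to extract the formula by equating two $q$-expansions of the modular form $(\alpha\,L(q^{\alpha})-\beta\,L(q^{\beta}))^{2}$ that are already at our disposal and solving the resulting coefficient identity for $W_{(\alpha,\beta)}(n)$.

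First I would recall, from \autoref{convolution-lemma_a_b}, that with the constants $X_{\delta}$ ($\delta\in D(\alpha\beta)$) and $Y_{j}$ ($1\le j\le m_{S}$) named in the statement,
$$
(\alpha\,L(q^{\alpha})-\beta\,L(q^{\beta}))^{2}
 = \sum_{\delta|\alpha\beta}X_{\delta} + \sum_{n=1}^{\infty}\Bigl(\,240\sum_{\delta|\alpha\beta}\sigma_{3}(\tfrac{n}{\delta})\,X_{\delta} + \sum_{j=1}^{m_{S}}\EuFrak{b}_{\alpha\beta,j}(n)\,Y_{j}\,\Bigr)q^{n},
$$
while \autoref{convolutionSum_a_b} expands the same function as
$$
(\alpha\,L(q^{\alpha})-\beta\,L(q^{\beta}))^{2} = (\alpha-\beta)^{2} + \sum_{n=1}^{\infty}\Bigl(240\,\alpha^{2}\sigma_{3}(\tfrac{n}{\alpha}) + 240\,\beta^{2}\sigma_{3}(\tfrac{n}{\beta}) + 48\,\alpha(\beta-6n)\sigma(\tfrac{n}{\alpha}) + 48\,\beta(\alpha-6n)\sigma(\tfrac{n}{\beta}) - 1152\,\alpha\beta\,W_{(\alpha,\beta)}(n)\Bigr)q^{n}.
$$
Equating the coefficients of $q^{n}$ for an arbitrary $n\ge 1$ gives one linear identity tying together $\sigma_{3}(n/\delta)$ for $\delta\mid\alpha\beta$, $\sigma(n/\alpha)$, $\sigma(n/\beta)$, the cusp-form coefficients $\EuFrak{b}_{\alpha\beta,j}(n)$, and $W_{(\alpha,\beta)}(n)$.

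Next I would solve that identity for $1152\,\alpha\beta\,W_{(\alpha,\beta)}(n)$ and then divide through by $1152\,\alpha\beta$. The one place that needs a moment's thought is the divisor sum $\sum_{\delta|\alpha\beta}\sigma_{3}(n/\delta)X_{\delta}$: since $\gcd(\alpha,\beta)=1$ and $\alpha<\beta$, both $\alpha$ and $\beta$ lie in $D(\alpha\beta)$ and are distinct, so this sum separates as $X_{\alpha}\sigma_{3}(n/\alpha)+X_{\beta}\sigma_{3}(n/\beta)+\sum_{\delta\mid\alpha\beta,\ \delta\neq\alpha,\beta}\sigma_{3}(n/\delta)X_{\delta}$. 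Performing this separation, combining the two isolated $\sigma_{3}$-terms with the $240\alpha^{2}$- and $240\beta^{2}$-terms coming from \autoref{convolutionSum_a_b}, and reducing the rational coefficients ($\tfrac{240}{1152}=\tfrac{5}{24}$ and $\tfrac{48\,\alpha(\beta-6n)}{1152\,\alpha\beta}=\tfrac{1}{24}-\tfrac{n}{4\beta}$, and symmetrically for the $\sigma(n/\beta)$-term) yields precisely the asserted formula.

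I do not expect a genuine obstacle here: the substance of the result is already carried by \autoref{basisCusp_a_b} and \autoref{convolution-lemma_a_b}, which supply the basis of $\M_{4}(\Gamma_{0}(\alpha\beta))$ together with the existence and uniqueness of the $X_{\delta}$ and $Y_{j}$; the present theorem is simply a rearrangement of the coefficient identity into closed form, and the only care required is in keeping the distinguished divisors $\alpha,\beta$ apart from the remaining divisors and tracking the numerical constants correctly.
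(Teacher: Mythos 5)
Your proposal is correct and follows exactly the paper's own argument: equate the $q^{n}$-coefficients of the two expansions of $(\alpha L(q^{\alpha})-\beta L(q^{\beta}))^{2}$ supplied by \autoref{convolution-lemma_a_b} and \autoref{convolutionSum_a_b}, solve for $1152\,\alpha\beta\,W_{(\alpha,\beta)}(n)$, and split off the $\delta=\alpha$ and $\delta=\beta$ terms of the divisor sum. The constant simplifications you record ($\tfrac{240}{1152}=\tfrac{5}{24}$ and $\tfrac{48\,\alpha(\beta-6n)}{1152\,\alpha\beta}=\tfrac{1}{24}-\tfrac{n}{4\beta}$) are exactly right.
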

\begin{proof}  We set the right hand side of  
\autoref{convolution_a_b-eqn-0} with that of 
\autoref{evalConvolClass-eqn-11} equal, 
which yields   
 \begin{multline*} 
 1152\,\alpha\,\beta\,W_{(\alpha,\beta)}(n) = 
 - 240\,\sum_{\delta|\alpha\beta}\,\sigma_{3}(\frac{n}{\delta})\,X_{\delta} 
 - \sum_{j=1}^{m_{S}}\,\EuFrak{b}_{\alpha\beta,j}(n)\,Y_{j} + 240\,\alpha^{2}\,\sigma_{3} 
    (\frac{n}{\alpha}) \\
    + 240\,\beta^{2}\,\sigma_{3}(\frac{n}{\beta}) 
    + 48\,\alpha\,(\beta-6\,n)\,\sigma(\frac{n}{\alpha})  
    + 48\,\beta\,(\alpha-6\,n)\,\sigma(\frac{n}{\beta}).   
\end{multline*}
Then we solve for $W_{(\alpha,\beta)}(n)$ to obtain the stated result.
\end{proof}
\begin{remark}
Observe that the following part of \autoref{convolution_a_b}  
$$(\frac{1}{24}-\frac{1}{4\beta}n)\sigma(\frac{n}{\alpha}) 
 + (\frac{1}{24}-\frac{1}{4\alpha}n)\sigma(\frac{n}{\beta})$$
depends only on $n,\alpha$ and $\beta$ and not on the basis of the 
modular space $\M_{4}(\Gamma_{0}(\alpha\beta))$. 
\end{remark}


\section{Number of Representations of a positive Integer for this Class 
of positive Integer}
\label{representations_a_b-c_d}

We discuss in this section the determination of formulae for the number of 
representations of a positive integer by the octonary quadratic forms 
\autoref{introduction-eq-1} and \autoref{introduction-eq-2}, respectively.

\subsection{Representations of a positive Integer by the Octonary Quadratic Form  \autoref{introduction-eq-1}
}
\label{representations_a_b}
We restrict the general form of $\alpha\beta$ to   
$2^{\nu}\mho$
where $\nu\in\{2, 3\}$ and $\mho$ is odd squarefree finite product of distinct 
odd primes; that is $\alpha\beta\equiv 0\pmod{4}$. 

\subsubsection{Determination of $(a,b)\in\mathbb{N}^{2}$} 
\label{determine_a_b}
We carry out a method to determine all pairs $(a,b)\in\mathbb{N}^{2}$ necessary for
the determination of $N_{(a,b)}(n)$ for a given $\alpha\beta\in\mathbb{N}$ 
which belongs to the above class. 

Let $\Lambda=\frac{\alpha\beta}{4}=2^{\nu-2}\mho$, 
$P_{4}=\{p_{0}=2^{\nu-2}\}\cup\underset{j>1}{\bigcup}\{ p_{j}\,\mid\, p_{j}
\text{ is a prime divisor of } \mho\,\}$ and  
$\EuScript{P}(P_{4})$ be the power set of $P_{4}$. Then for each $Q\in\EuScript{P}(P_{4})$ 
we define $\mu(Q)=\underset{p\in Q}{\prod}p$. We set $\mu(Q)=1$ if $Q$ is an 
empty set. Let now 
\begin{multline*}
\Omega_{4}=\{(\mu(Q_{1}),\mu(Q_{2}))~|~\text{ there exist } Q_{1},Q_{2}\in\EuScript{P}(P_{4}) 
\text{ such that }  \\ 
\gcd{(\mu(Q_{1}),\mu(Q_{2}))}=1\,\text{ and } 
\mu(Q_{1})\,\mu(Q_{2})=\Lambda\,\}.
\end{multline*}
Observe that $\Omega_{4}\neq\emptyset$ since $(1,\Lambda)\in\Omega_{4}$.

To illustrate our method, suppose that $\alpha\beta=2^{3}\cdot 3\cdot 5$. Then 
$\Lambda=2\cdot 3\cdot 5$, $P_{4}=\{2,3,5\}$ and 
$\Omega_{4}=\{(1,30),(2,15),(3,10),(5,6)\}$.
\begin{proposition} \label{representation-prop-1}
Suppose that $\alpha\beta$ has the above restricted form and suppose that $\Omega_{4}$ is 
defined as above. Then for all $n\in\mathbb{N}$ the set $\Omega_{4}$ 
contains all pairs $(a,b)\in\mathbb{N}^{2}$ such that $N_{(a,b)}(n)$ can 
be obtained by applying $W_{(\alpha,\beta)}(n)$ and some other evaluated 
convolution sums.
\end{proposition}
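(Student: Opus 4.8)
The plan is to show that the number of representations $N_{(a,b)}(n)$ of $n$ by the form $a(x_1^2+\dots+x_4^2)+b(x_5^2+\dots+x_8^2)$ reduces, via the classical formula for the number of representations of an integer as a sum of four squares, to a convolution sum of the shape $W_{(\mu_1,\mu_2)}$ together with lower‑order divisor terms, and that the only pairs $(\mu_1,\mu_2)$ that can arise are exactly those in $\Omega_4$. First I would recall the generating function identity: if $\varphi(q)=\sum_{x\in\mathbb Z}q^{x^2}$, then $\varphi(q)^4 = 1 + 8\sum_{m\ge 1}\bigl(\sum_{4\nmid d,\ d\mid m}d\bigr)q^{m}$, so the theta series of the octonary form \autoref{introduction-eq-1} is $\varphi(q^a)^4\varphi(q^b)^4$, whose $n$‑th coefficient is $N_{(a,b)}(n)$. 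Expanding the product, the cross term produces a Cauchy product of two such divisor sums evaluated at arguments $a$ and $b$; after splitting the restricted divisor sum $\sum_{4\nmid d\mid m}d = \sigma(m) - 4\sigma(m/4)$ one is left with finitely many genuine convolution sums $W_{(\alpha',\beta')}(n)$ where $\alpha'\beta'\mid\alpha\beta$, plus terms that are pure $\sigma_3$‑ or $\sigma$‑evaluations (which are ``evaluated convolution sums'' in the sense of \autoref{convolutionSum-a-a-eqn}).

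Next I would nail down the bookkeeping. Since $\gcd(\alpha,\beta)=1$ and $\alpha\beta=2^\nu\mho$ with $\nu\in\{2,3\}$, in the cross term $\varphi(q^a)^4$ contributes factors $\sigma(\cdot) - 4\sigma(\cdot/4)$ at $a$ and similarly at $b$; combining the ``$-4\sigma(\cdot/4)$'' with $q^a$ is the same as replacing $a$ by $4a$, i.e.\ it rescales the weights by a power of $2$. Tracking all four combinations, the convolution sums that appear are $W_{(a,b)}$, $W_{(4a,b)}$, $W_{(a,4b)}$, $W_{(4a,4b)}$, and after dividing out $\gcd$ via \autoref{convolutionSumsEqns-gcd} these become $W_{(\mu_1,\mu_2)}$ with $\mu_1\mu_2$ ranging over the divisors of $\alpha\beta$ of the form $\Lambda\cdot 2^{\epsilon}$, $\epsilon\in\{0,1,2\}$, subject to $\gcd(\mu_1,\mu_2)=1$ and $\mu_1\mu_2\mid\alpha\beta$. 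I would then verify that the set of pairs $(\mu_1,\mu_2)$ arising this way, up to swapping, is precisely the set of coprime factorizations $\mu_1\mu_2=\Lambda$ where $\Lambda=\alpha\beta/4=2^{\nu-2}\mho$: the key point is that because $\alpha\beta\equiv 0\pmod 4$, every ``$W_{(4a,4b)}$'' term already has $\gcd 4$ to cancel, and the three remaining terms $W_{(4a,b)},W_{(a,4b)},W_{(a,b)}$ after cancellation all have product of weights equal to $\Lambda$ or a divisor thereof, while conversely each coprime factorization of $\Lambda$ is realized. This matches the definition of $\Omega_4$ through the power set $\EuScript P(P_4)$, since taking one prime power $p_0=2^{\nu-2}$ together with the distinct odd prime divisors of $\mho$ and splitting them into two coprime blocks $Q_1,Q_2$ with $\mu(Q_1)\mu(Q_2)=\Lambda$ is exactly enumerating the coprime factorizations of $\Lambda$.

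To finish, I would observe that every convolution sum $W_{(\alpha',\beta')}$ so produced has level $\alpha'\beta'$ dividing $\alpha\beta$, hence is either the target sum $W_{(\alpha,\beta)}$ itself or has strictly smaller level; those of smaller level are ``some other evaluated convolution sums'' either by induction on the level within this class, or because they already appear in \autoref{introduction-table-1}, or because they collapse to the $\sigma_3$/$\sigma$ expressions of \autoref{convolutionSum-a-a-eqn} when $\alpha'=\beta'$. Combining this with the formula for $N_{(a,b)}(n)$ obtained by equating coefficients in $\varphi(q^a)^4\varphi(q^b)^4$ shows that for every $(a,b)$ for which this reduction works one must have $(a,b)\in\Omega_4$ (after ordering), and conversely each element of $\Omega_4$ does arise; this is exactly the assertion of the proposition.

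The main obstacle I anticipate is the precise combinatorial claim that the pairs $(\mu_1,\mu_2)$ obtained after all four $\gcd$‑cancellations coincide set‑theoretically with $\Omega_4$ rather than with some larger set of coprime factorizations of $\alpha\beta$; one has to use carefully that $\nu\ge 2$ so that the factor $4$ introduced by the ``$-4\sigma(\cdot/4)$'' term is always absorbed, and that $\mho$ is squarefree so that coprime factorizations are in bijection with subsets of the prime set $P_4$. A secondary nuisance is making rigorous the phrase ``can be obtained by applying $W_{(\alpha,\beta)}(n)$ and some other evaluated convolution sums'', which I would interpret (and state explicitly) as: $N_{(a,b)}(n)$ is an explicit $\mathbb Q$‑linear combination of $W_{(\alpha,\beta)}(n)$, of finitely many $W$‑sums of level properly dividing $\alpha\beta$, and of $\sigma_3(n/t),\sigma(n/t)$ for $t\mid\alpha\beta$.
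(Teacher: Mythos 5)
Your proposal is correct in substance but reaches the proposition by a genuinely different route from the paper. The paper's proof of \autoref{representation-prop-1} is purely combinatorial: it argues by cases on the number of odd prime divisors of $\mho$ (one prime, then two) that any pair $(e,f)$ with $ef=\Lambda$ and $\gcd{(e,f)}=1$ already lies in $\Omega_{4}$, so that $\Omega_{4}$ is the maximal set of coprime factorizations of $\Lambda=\alpha\beta/4$; the analytic content --- that for such $(a,b)$ one has $N_{(a,b)}(n)=64\,W_{(a,b)}(n)+1024\,W_{(a,b)}(\frac{n}{4})-256\,(W_{(4a,b)}(n)+W_{(a,4b)}(n))$ plus divisor terms --- is deferred entirely to \autoref{representations-theor_a_b}, whose proof uses Jacobi's formula \autoref{representations-eqn-4-1} exactly as you do with $\varphi(q^{a})^{4}\varphi(q^{b})^{4}$. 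You fold the two steps together: you derive the reduction first and then read off which $(a,b)$ force a level-$\alpha\beta$ convolution sum to appear. What your route buys is that it actually ties the defining property in the statement (``$N_{(a,b)}(n)$ can be obtained by applying $W_{(\alpha,\beta)}(n)$\dots'') to the combinatorics of $\Omega_{4}$, a link the paper's proof leaves implicit; what the paper's route buys is brevity and a clean separation between the combinatorial proposition and the analytic theorem. One bookkeeping slip to fix in your write-up: after the $\gcd$-cancellations the surviving sums do \emph{not} all have level dividing $\Lambda$ --- precisely one of $W_{(4a,b)}$, $W_{(a,4b)}$ keeps level $4ab=\alpha\beta$ (this is the term that produces a sum of level $\alpha\beta$), while the remaining ones reduce to levels dividing $2\Lambda$. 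Your identification of $\Omega_{4}$ with the coprime factorizations of $\Lambda$, using that $\mho$ is squarefree and $2^{\nu-2}\in\{1,2\}$ so that every factorization of $\Lambda$ is automatically coprime, is correct and is in fact the whole substance of the paper's case analysis.
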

\begin{proof}
We prove this by induction on the structure of $\alpha\,\beta$.

Suppose that $\alpha\beta=2^{\nu}p_{2}$, where $\nu\in\{2, 3\}$ and $p_{2}$ is 
an odd prime. Then by the above definitions we have $\Lambda=2^{\nu-2}p_{2}$, 
$P_{4}=\{\,2^{\nu-2},p_{2}\,\}$, 
$$\EuScript{P}(P_{4})=\{\,\emptyset,\{2^{\nu-2}\},\{p_{2}\},\{\,2^{\nu-2},p_{2}\}\,\},$$
and $\Omega_{4}=\{\,(1,2^{\nu-2}p_{2}),(2^{\nu-2},p_{2}) \,\}$.
 
We show that $\Omega_{4}$ is the largest such set. 
Assume now that there exist another set, 
say $\Omega'_{4}$, which results from the
above definitions. Then there are two cases.
\begin{description} 
\item[Case $\Omega'_{4}\subseteq\Omega_{4}$] There is nothing to show. 
So, we are done.
\item[Case $\Omega_{4}\subset\Omega'_{4}$] Let
$(e,f)\in\Omega_{4}'\setminus\Omega_{4}$.  Since $ef=2^{\nu-2}p_{2}$ and 
$\gcd{(e,f)}=1$, we must
have either $(e,f)=(1,2^{\nu-2}p_{2})$ or $(e,f)=(2^{\nu-2},p_{2})$. So, 
$(e,f)\in\Omega_{4}$. Hence, $\Omega_{4}=\Omega'_{4}$.
\end{description} 
Suppose now that $\alpha\beta=2^{\nu}p_{2}p_{3}$, where $\nu\in\{2, 3\}$ and 
$p_{2},p_{3}$ are distinct odd primes. Then by the induction hypothesis and by
the above definitions we have essentially  
$$\Omega_{4}=\{\,(1,2^{\nu-2}p_{2}p_{3}),(2^{\nu-2},p_{2}p_{3}),(2^{\nu-2}p_{2},p_{3}),(2^{\nu-2}p_{3},p_{2})
\,\}.$$ 
Again, we show that $\Omega_{4}$ is the largest such set. 
Suppose that there exist another set, 
say $\Omega'_{4}$, which 
results from the above definitions. Two cases arise.
\begin{description} 
\item[Case $\Omega'_{4}\subseteq\Omega_{4}$] There is nothing to prove. 
So, we are done.
\item[Case $\Omega_{4}\subset\Omega'_{4}$] Let
$(e,f)\in\Omega_{4}'\setminus\Omega_{4}$.  Since $ef=2^{\nu-2}p_{2}p_{3}$ and
$\gcd{(e,f)}=1$, we must
have $(e,f)=(1,2^{\nu-2}p_{2}p_{3})$ or $(e,f)=(2^{\nu-2},p_{2}p_{3})$ or 
$(e,f)=(2^{\nu-2}p_{2},p_{3})$ or $(e,f)=(2^{\nu-2}p_{3},p_{2})$. 
So, $(e,f)\in\Omega_{4}$. Hence, $\Omega_{4}=\Omega'_{4}$.
\end{description} 
\end{proof}

\subsubsection{Formulae for the Number of Representations by 
\autoref{introduction-eq-1}
 }
\label{represent_a_b}
As an immediate application of \autoref{convolution_a_b} a formula for the number 
of representations of a positive integer $n$  by the octonary quadratic form 
\autoref{introduction-eq-1} is determined for each $(a,b)\in\Omega_{4}$.

Let $n\in\mathbb{N}_{0}$ and let the number of representations
of $n$ by the qua\-tern\-ary qua\-drat\-ic form  $x_{1}^{2} +x_{2}^{2}+x_{3}^{2} +
x_{4}^{2}$ be  
$r_{4}(n)=\text{card}(\{(x_{1},x_{2},x_{3},x_{4})\in\mathbb{Z}^{4}~|~ n = x_{1}^{2} +x_{2}^{2} + x_{3}^{2} + x_{4}^{2}\})$.
It follows from the definition that $r_{4}(0) = 1$. For all $n\in\mathbb{N}$, 
the following Jacobi's identity is proved in \ksW ' book 
\cite[Thrm 9.5, p.\ 83]{williams2011} 
\begin{equation}
r_{4}(n) = 8\sigma(n) - 32\sigma(\frac{n}{4}). \label{representations-eqn-4-1}
\end{equation}

Now, let the number of representations of $n$ by the octonary quadratic form 
\autoref{introduction-eq-1} be 
\begin{multline*}
N_{(a,b)}(n) 
=\text{card}
(\{(x_{1},x_{2},x_{3},x_{4},x_{5},x_{6},x_{7},x_{8})\in\mathbb{Z}^{8}~|~
n = a\,( x_{1}^{2} +x_{2}^{2}  
    + x_{3}^{2} + x_{4}^{2} ) \\ + 
b\,( x_{5}^{2} +x_{6}^{2} + x_{7}^{2} + x_{8}^{2}) \}),
\end{multline*}
where $a,b\in\mathbb{N}$. 
Let $1<\lambda\in\mathbb{N}$ and $\tau:\mathbb{N}\mapsto\mathbb{N}$ be an 
injective function such that $\tau(n)=\lambda\cdot n$ for each $n\in\mathbb{N}$.

We then derive the following result:
\begin{theorem} \label{representations-theor_a_b}
Let $n\in\mathbb{N}$ and let $(a,b)\in\Omega_{4}$. Then  
\begin{align*}
N_{(a,b)}(n)  = ~ & 
8\sigma(\frac{n}{a}) - 32\sigma(\frac{n}{4a}) 
+ 8\sigma(\frac{n}{b}) - 32\sigma(\frac{n}{4b}) + 64\, W_{(a,b)}(n) 
+ 1024\, W_{(a,b)}(\frac{n}{4})   \\ & 
- 256\, \biggl( W_{(4a,b)}(n) + W_{(a,4b)}(n) \biggr). 
\end{align*}
\end{theorem}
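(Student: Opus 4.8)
The plan is to express $N_{(a,b)}(n)$ as a coefficient in a product of theta series and then rewrite that product in terms of Eisenstein series and convolution sums. First I would introduce the generating function $\varphi(q)=\sum_{x\in\mathbb{Z}}q^{x^{2}}$, so that $\varphi(q)^{4}=\sum_{n\geq 0}r_{4}(n)q^{n}$, and observe that
\begin{equation*}
\sum_{n=0}^{\infty}N_{(a,b)}(n)q^{n}=\varphi(q^{a})^{4}\,\varphi(q^{b})^{4}.
\end{equation*}
The key identity is Jacobi's formula \autoref{representations-eqn-4-1}, which in generating-function form reads $\varphi(q)^{4}=1+8\sum_{n\geq 1}\bigl(\sigma(n)-4\sigma(n/4)\bigr)q^{n}$. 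Equivalently, writing $A(q)=\sum_{n\geq 1}\sigma(n)q^{n}$, we have $\varphi(q)^{4}=1+8A(q)-32A(q^{4})$. I would substitute $q\mapsto q^{a}$ and $q\mapsto q^{b}$ and multiply the two resulting series.

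Next I would expand the product $\varphi(q^{a})^{4}\varphi(q^{b})^{4}=\bigl(1+8A(q^{a})-32A(q^{4a})\bigr)\bigl(1+8A(q^{b})-32A(q^{4b})\bigr)$ term by term. The nine resulting terms are: the constant $1$; the linear pieces $8A(q^{a})-32A(q^{4a})$ and $8A(q^{b})-32A(q^{4b})$, whose $n$-th coefficients contribute $8\sigma(n/a)-32\sigma(n/4a)+8\sigma(n/b)-32\sigma(n/4b)$; and the cross terms, which are products of two series of the shape $\sum\sigma(n/\gamma)q^{n}$. For any $\gamma,\delta\in\mathbb{N}$ one has $\bigl(\sum_{n\geq 1}\sigma(n/\gamma)q^{n}\bigr)\bigl(\sum_{n\geq 1}\sigma(n/\delta)q^{n}\bigr)=\sum_{n\geq 1}W_{(\gamma,\delta)}(n)q^{n}$, exactly as in the computation preceding \autoref{evalConvolClass-eqn-10}. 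Thus the four cross terms give coefficients $64\,W_{(a,b)}(n)-256\,W_{(4a,b)}(n)-256\,W_{(a,4b)}(n)+1024\,W_{(4a,4b)}(n)$.

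Finally I would simplify $W_{(4a,4b)}(n)$ using the scaling identity \autoref{convolutionSumsEqns-gcd}: since $\gcd(4a,4b)=4\gcd(a,b)$ and here $\gcd(a,b)=1$ because $(a,b)\in\Omega_{4}$ (the two components are coprime by construction of $\Omega_{4}$), we get $W_{(4a,4b)}(n)=W_{(a,b)}(n/4)$. Substituting this and collecting all nine contributions yields precisely the stated formula. The only genuine subtlety is bookkeeping: making sure the substitutions $q\mapsto q^{a}$, $q\mapsto q^{4a}$, etc., interact correctly with the $\sigma(n/\gamma)$ notation (recall $\sigma(m)=0$ unless $m\in\mathbb{N}$), and confirming that no term has been double-counted; the main obstacle is therefore just careful index tracking rather than any deep input, since all the analytic content is already packaged in Jacobi's identity and in the convolution-sum/scaling lemmas established earlier.
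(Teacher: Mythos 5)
Your proposal is correct and follows essentially the same route as the paper: both rest on Jacobi's identity $r_{4}(n)=8\sigma(n)-32\sigma(\frac{n}{4})$, expand the resulting product into the same nine terms, and identify the four cross terms with $W_{(a,b)}(n)$, $W_{(4a,b)}(n)$, $W_{(a,4b)}(n)$ and $W_{(a,b)}(\frac{n}{4})$. The only cosmetic difference is that you package the computation as a product of theta series $\varphi(q^{a})^{4}\varphi(q^{b})^{4}$ and invoke the scaling identity \autoref{convolutionSumsEqns-gcd} for $W_{(4a,4b)}(n)=W_{(a,b)}(\frac{n}{4})$, whereas the paper manipulates the double sum over $(l,m)$ directly and obtains the same reduction via the substitution $\tau$.
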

\begin{proof} 
We have 
\begin{multline*}
N_{(a,b)}(n)  = \sum_{\substack{
{(l,m)\in\mathbb{N}_{0}^{2}} \\ {a\,l+b\,m=n}
 }}r_{4}(l)r_{4}(m) 
   = r_{4}(\frac{n}{a})r_{4}(0) + r_{4}(0)r_{4}(\frac{n}{b}) 
   + \sum_{\substack{
{(l,m)\in\mathbb{N}^{2}} \\ {a\,l+b\,m=n}
 }}r_{4}(l)r_{4}(m).
\end{multline*}
We make use of \autoref{representations-eqn-4-1} to obtain 
\begin{multline*}
N_{(a,b)}(n)  = 8\sigma(\frac{n}{a}) - 32\sigma(\frac{n}{4a}) + 8\sigma(\frac{n}{b}) -
32\sigma(\frac{n}{4b}) \\
   + \sum_{\substack{
{(l,m)\in\mathbb{N}^{2}} \\ {al+bm=n}
  }} (8\sigma(l) - 32\sigma(\frac{l}{4}))(8\sigma(m) - 32\sigma(\frac{m}{4})). 
\end{multline*}
We know that 
\begin{multline*}
(8\sigma(l) - 32\sigma(\frac{l}{4}))(8\sigma(m) - 32\sigma(\frac{m}{4}))  = 
64\sigma(l)\sigma(m) - 256\sigma(\frac{l}{4})\sigma(m) \\
   - 256\sigma(l)\sigma(\frac{m}{4})  + 1024\sigma(\frac{l}
   {4})\sigma(\frac{m}{4}).
\end{multline*}
In the sequel of this proof, we assume that the evaluation of 
\begin{equation*}
W_{(a,b)}(n) = \sum_{\substack{
{(l,m)\in\mathbb{N}^{2}} \\ {al+bm=n}
 }}\sigma(l)\sigma(m),
\end{equation*}
$W_{(4a,b)}(n)$ and $W_{(a,4b)}(n)$ are known. 
We set $\lambda = 4$ in the sequel. When we use the function 
$\tau$ with $l$ as argument we derive  
\begin{equation*}
W_{(4a,b)}(n) = \sum_{\substack{
{(l,m)\in\mathbb{N}^{2}} \\ {al+bm=n}
}}\sigma(\frac{l}{4})\sigma(m) 
 = \sum_{\substack{
{(l,m)\in\mathbb{N}^{2}} \\ {4a\,l+bm=n}
 }}\sigma(l)\sigma(m).
\end{equation*}
When we apply 
the function $\tau$ with $m$ as argument we infer 
\begin{equation*}
W_{(a,4b)}(n) = \sum_{\substack{
{(l,m)\in\mathbb{N}^{2}} \\ {al+bm=n}
}}\sigma(l)\sigma(\frac{m}{4})  = \sum_{\substack{
{(l,m)\in\mathbb{N}^{2}} \\ {al+4b\,m=n}
 }}\sigma(l)\sigma(m).
\end{equation*}
We simultaneously apply the function $\tau$ with $l$ and $m$ as arguments, 
respectively, to conclude
\begin{equation*}
\sum_{\substack{
{(l,m)\in\mathbb{N}^{2}} \\ {al+bm=n}
}}\sigma(\frac{l}{4})\sigma(\frac{m}{4}) 
= \sum_{\substack{
{(l,m)\in\mathbb{N}^{2}} \\ {al+bm=\frac{n}{4}}
 }}\sigma(l)\sigma(m)
 = W_{(a,b)}(\frac{n}{4}).
\end{equation*}
 
We finally put all these evaluations together to obtain the stated result 
for $N_{(a,b)}(n)$. 
\end{proof}


\subsection{Representations of a Positive Integer by the Octonary Quadratic Form \autoref{introduction-eq-2}
} 
\label{representations_c_d}

In this case, the general form of $\alpha\beta$ is restricted to 
$2^{\nu}\mho$, where $\mho\equiv 0\pmod{3}$.

\subsubsection{Determination of $(c,d)\in\mathbb{N}^{2}$} 
\label{determine_c_d}
The following method determine all pairs $(c,d)\in\mathbb{N}^{2}$ necessary for
the determination of $R_{(c,d)}(n)$ for a given $\alpha\beta\in\mathbb{N}$ 
belonging to the above class. The following method is quasi similar to the one 
used in \hyperref[determine_a_b]{Subsection \ref*{determine_a_b}}. 

Let $\Delta=\frac{\alpha\beta}{3}=\frac{2^{\nu}\mho}{3}$. Let 
$P_{3}=\{p_{0}=2^{\nu}\}\cup\underset{j>2}{\bigcup}\{ p_{j}\,\mid\,p_{j}\text{ is
a prime divisor of } \mho\,\}$. 
Let $\EuScript{P}(P_{3})$ be the power set of $P_{3}$. Then for each 
$Q\in\EuScript{P}(P_{3})$ 
we define $\mu(Q)=\underset{p\in Q}{\prod}p$. We set $\mu(Q)=1$ if 
$Q$ is an empty set.
Let now $\Omega_{3}$ be defined in a similar way as $\Omega_{4}$ in 
\hyperref[determine_a_b]{Subsection \ref*{determine_a_b}}, 
however with $\Delta$ instead of $\Lambda$, i.e., 
\begin{multline*}
\Omega_{3}=\{(\mu(Q_{1}),\mu(Q_{2}))~|~\text{ there exist } Q_{1},Q_{2}\in\EuScript{P}(P_{3}) 
\text{ such that }  \\ 
\gcd{(\mu(Q_{1}),\mu(Q_{2}))}=1\,\text{ and } 
\mu(Q_{1})\,\mu(Q_{2})=\Delta\,\}.
\end{multline*}
Note that $\Omega_{3}\neq\emptyset$ since $(1,\Delta)\in\Omega_{3}$.
As an example, suppose again that $\alpha\beta=2^{3}\cdot 3\cdot 5$. Then 
$\Delta=2^{3}\cdot 5$, $P_{3}=\{2^{3},5\}$ and $\Omega_{3}=\{(1,40),(5,8)\}$.
\begin{proposition} \label{representation-prop-2}
Suppose that $\alpha\beta$ has the above restricted form and Suppose that $\Omega_{3}$ 
be defined as above. Then for all $n\in\mathbb{N}$ the set $\Omega_{3}$ 
contains all pairs $(c,d)\in\mathbb{N}^{2}$ such that $R_{(c,d)}(n)$ can 
be obtained by applying $W_{(\alpha,\beta)}(n)$ and some other evaluated 
convolution sums.
\end{proposition}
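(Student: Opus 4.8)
The plan is to mimic, essentially verbatim, the proof of \autoref{representation-prop-1}, arguing by induction on the number of distinct odd prime divisors of $\mho$ (equivalently, on the structure of $\alpha\beta=2^{\nu}\mho$), and showing at every stage that $\Omega_{3}$ is the \emph{largest} set of admissible pairs produced by the construction. The guiding observation is that, just as in \autoref{representations-theor_a_b} the count $N_{(a,b)}(n)$ was written through $W_{(a,b)}$, $W_{(4a,b)}$, $W_{(a,4b)}$ and $W_{(a,b)}(\tfrac{n}{4})$ with $4ab=\alpha\beta$, here $R_{(c,d)}(n)$ will be written through $W_{(c,d)}$, $W_{(3c,d)}$, $W_{(c,3d)}$ and $W_{(c,d)}(\tfrac{n}{3})$ — the dilation by $3$ entering because the relevant quaternary form $x_{1}^{2}+x_{1}x_{2}+x_{2}^{2}+x_{3}^{2}+x_{3}x_{4}+x_{4}^{2}$ has a representation number of the shape $\sigma(n)-3\,\sigma(\tfrac{n}{3})$ up to a multiplicative constant. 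The levels of the convolution sums that appear are then $3cd$ and $cd$; for these to be usable one needs $3cd=\alpha\beta$, i.e.\ $cd=\Delta$, together with $\gcd(c,d)=1$ (which, since $3\nmid cd$, also forces $\gcd(3c,d)=\gcd(c,3d)=1$). So a pair $(c,d)$ is admissible precisely when it is a coprime factorisation of $\Delta$ into factors assembled from the atoms $2^{\nu}$ and the odd prime divisors of $\mho$ other than $3$ — which is exactly the defining property of membership in $\Omega_{3}$.

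I would then run the induction. The base case is $\mho=3$ (resp.\ $\mho=3p$ with $p$ an odd prime $\neq 3$): here $\Delta=2^{\nu}$ (resp.\ $\Delta=2^{\nu}p$), $P_{3}=\{2^{\nu}\}$ (resp.\ $\{2^{\nu},p\}$), and one checks directly that $\Omega_{3}=\{(1,2^{\nu})\}$ (resp.\ $\{(1,2^{\nu}p),(2^{\nu},p)\}$) already lists every coprime factorisation of $\Delta$ of the required shape. For the inductive step, assuming the statement for $\alpha\beta$ one passes to $\alpha\beta\cdot p$ for a new odd prime $p\nmid\alpha\beta$ (so that $\alpha\beta p=2^{\nu}(\mho p)$ still has the required form); the induction hypothesis together with the definitions yields the explicit list for $\Omega_{3}$, and maximality is then argued exactly as in \autoref{representation-prop-1}: if $\Omega_{3}'$ is any set obtained from the same construction, either $\Omega_{3}'\subseteq\Omega_{3}$ and there is nothing to prove, or $\Omega_{3}\subsetneq\Omega_{3}'$ and we pick $(e,f)\in\Omega_{3}'\setminus\Omega_{3}$; since $ef=\Delta$ and $\gcd(e,f)=1$, the factor $e$ must be the product of some subcollection of the atoms $\{2^{\nu}\}\cup\{\text{odd primes of }\mho\text{ other than }3\}$ and $f$ the product of the complementary subcollection, so $(e,f)\in\Omega_{3}$, a contradiction; hence $\Omega_{3}=\Omega_{3}'$.

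The hard part will be the (implicit) completeness claim: justifying that \emph{only} pairs of this shape can arise, i.e.\ that one cannot manufacture $R_{(c,d)}(n)$ for some $(c,d)$ with $cd\neq\Delta$, or with the full $2$-power split between $c$ and $d$, by a more elaborate combination of already-evaluated convolution sums. The reason this does not happen is structural: the representation-number identity for $x^{2}+xy+y^{2}$ (cf.\ the $(c,d)=(1,1)$ row of \autoref{introduction-table-rep1}) only introduces the dilation $n\mapsto\tfrac{n}{3}$, so the analogue of \autoref{representations-theor_a_b} is forced to carry the rigid constraint $3cd=\alpha\beta$ and to keep the block $2^{\nu}$ intact; consequently the admissible $(c,d)$ are exactly the coprime factorisations of $\Delta$ into products of atoms of $P_{3}$, which is the definition of $\Omega_{3}$. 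Granting this, the combinatorial maximality argument above finishes the proof, being the line-by-line analogue of the proof of \autoref{representation-prop-1}.
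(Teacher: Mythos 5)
Your proposal is correct and is essentially the paper's own argument: the paper's proof of this proposition consists of the single line ``similar to the proof of Proposition \ref{representation-prop-1},'' and your write-up is exactly that analogue, replacing $\Lambda$, $P_{4}$, $\Omega_{4}$ by $\Delta$, $P_{3}$, $\Omega_{3}$ and running the same induction on the prime structure of $\alpha\beta$ with the same maximality argument. Your additional discussion of why the constraint $3cd=\alpha\beta$ is forced by the $s_{4}(n)=12\sigma(n)-36\sigma(\tfrac{n}{3})$ identity goes beyond what the paper records but is consistent with it.
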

\begin{proof} 
Simlar to the proof of 
\hyperref[representation-prop-1]{Proposition \ref*{representation-prop-1}}.
\end{proof}

\subsubsection{Formulae for the Number of Representations by 
 \autoref{introduction-eq-2}
 }
\label{represent_c_d}
We apply \autoref{convolution_a_b} to determine a formula for the number 
of representations of a positive integer $n$  by the octonary quadratic form  
\autoref{introduction-eq-2} for each $(c,d)\in\Omega_{3}$.

Let $n\in\mathbb{N}_{0}$ and  let $s_{4}(n)$ denote the number of representations
of $n$ by the quaternary quadratic form  $x_{1}^{2} + x_{1}x_{2} + x_{2}^{2} +
x_{3}^{2} + x_{3}x_{4} + x_{4}^{2}$, that is,   
\begin{multline*}
s_{4}(n)=\text{card}(\{(x_{1},x_{2},x_{3},x_{4})\in\mathbb{Z}^{4}~|~  
n = x_{1}^{2} + x_{1}x_{2} + x_{2}^{2} + x_{3}^{2} + x_{3}x_{4} + x_{4}^{2}\}).
\end{multline*}
It is obvious that $s_{4}(0) = 1$. \jgH\ et al.\ \cite{huardetal}, \gaL\
\cite{lomadze} and \ksW\ \cite[Thrm 17.3, p.\ 225]{williams2011} have 
proved that for all $n\in\mathbb{N}$ 
\begin{equation}
s_{4}(n) = 12\sigma(n) - 36\sigma(\frac{n}{3}). \label{representations-eqn-c_d-1}
\end{equation}
Now, let the number of representations of $n$ by the octonary quadratic form 
\autoref{introduction-eq-2} be 
\begin{multline*}
R_{(c,d)}(n) =\text{card}
(\{(x_{1},x_{2},x_{3},x_{4},x_{5},x_{6},x_{7},x_{8})\in\mathbb{Z}^{8}~|~
n = c\,(x_{1}^{2} + x_{1}x_{2}  \\
   + x_{2}^{2}  + x_{3}^{2} + x_{3}x_{4} + x_{4}^{2}) + 
d\,( x_{5}^{2} + x_{5}x_{6}+ x_{6}^{2} + x_{7}^{2} + x_{7}x_{8}+ x_{8}^{2}) \}).
\end{multline*}
Let $\lambda$ and $\tau$ be defined as in 
\hyperref[represent_a_b]{Section \ref*{represent_a_b}}.

We infer the following result:
\begin{theorem} \label{representations-theor-c_d}
Let $n\in\mathbb{N}$ and $(c,d)\in\Omega_{3}$. Then  
\begin{align*}
R_{(c,d)}(n) =  & 
12\sigma(\frac{n}{c}) - 36\sigma(\frac{n}{3c}) + 12\sigma(\frac{n}{d}) -
36\sigma(\frac{n}{3d}) + 144\, W_{(c,d)}(n) 
   + 1296\, W_{(c,d)}(\frac{n}{3}) \\ & 
   - 432\, \biggl( W_{(3c,d)}(n) +  W_{(c,3d)}(n) \biggr). 
\end{align*}
\end{theorem}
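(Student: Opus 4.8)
\textbf{Proof proposal for Theorem \ref{representations-theor-c_d}.}
The plan is to mirror exactly the argument used for Theorem \ref{representations-theor_a_b}, replacing the quaternary form $x_1^2+\dots+x_4^2$ by the form $x_1^2+x_1x_2+x_2^2+x_3^2+x_3x_4+x_4^2$ and the Jacobi identity \autoref{representations-eqn-4-1} by the identity \autoref{representations-eqn-c_d-1}. First I would split $R_{(c,d)}(n)$ according to whether either of the two quaternary blocks is evaluated at $0$: writing $R_{(c,d)}(n)=\sum_{cl+dm=n}s_4(l)s_4(m)$ over $(l,m)\in\mathbb{N}_0^2$, the terms with $l=0$ or $m=0$ contribute $s_4(\tfrac{n}{d})s_4(0)+s_4(0)s_4(\tfrac{n}{c})$, and since $s_4(0)=1$ these become, via \autoref{representations-eqn-c_d-1}, the four $\sigma$-terms $12\sigma(\tfrac{n}{c})-36\sigma(\tfrac{n}{3c})+12\sigma(\tfrac{n}{d})-36\sigma(\tfrac{n}{3d})$.

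Next I would handle the main sum over $(l,m)\in\mathbb{N}^2$. Substituting \autoref{representations-eqn-c_d-1} into each factor gives
\begin{multline*}
(12\sigma(l)-36\sigma(\tfrac{l}{3}))(12\sigma(m)-36\sigma(\tfrac{m}{3})) = 144\,\sigma(l)\sigma(m) - 432\,\sigma(\tfrac{l}{3})\sigma(m) \\ - 432\,\sigma(l)\sigma(\tfrac{m}{3}) + 1296\,\sigma(\tfrac{l}{3})\sigma(\tfrac{m}{3}).
\end{multline*}
Then, exactly as in the proof of Theorem \ref{representations-theor_a_b} but with $\lambda=3$ in the substitution $\tau(n)=\lambda n$, I would reindex each of the three "shifted" sums: $\sum_{cl+dm=n}\sigma(\tfrac{l}{3})\sigma(m)=\sum_{3cl+dm=n}\sigma(l)\sigma(m)=W_{(3c,d)}(n)$, likewise $\sum_{cl+dm=n}\sigma(l)\sigma(\tfrac{m}{3})=W_{(c,3d)}(n)$, and finally $\sum_{cl+dm=n}\sigma(\tfrac{l}{3})\sigma(\tfrac{m}{3})=\sum_{cl+dm=n/3}\sigma(l)\sigma(m)=W_{(c,d)}(\tfrac{n}{3})$. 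Collecting the four pieces yields
\[
144\,W_{(c,d)}(n) + 1296\,W_{(c,d)}(\tfrac{n}{3}) - 432\bigl(W_{(3c,d)}(n)+W_{(c,3d)}(n)\bigr),
\]
which, added to the boundary $\sigma$-terms, is exactly the claimed formula.

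There is essentially no analytic obstacle here — the identity \autoref{representations-eqn-c_d-1} is quoted, and the convolution sums $W_{(c,d)}(n)$, $W_{(3c,d)}(n)$, $W_{(c,3d)}(n)$ are available either from Theorem \ref{convolution_a_b} or from previously known evaluations, which is precisely why $(c,d)$ is taken from $\Omega_3$ (so that $cd$, $3cd$ and related products have levels in the class treated in Section \ref{convolution_alpha_beta}, with the factor $3$ guaranteed by the restriction $\mho\equiv 0\pmod 3$). The only point needing a line of care is the reindexing step: one must note that $\sigma(\tfrac{l}{3})=0$ unless $3\mid l$, so writing $l=3l'$ turns the constraint $cl+dm=n$ into $3cl'+dm=n$ with $l'$ ranging over all of $\mathbb{N}$, and similarly for the doubly-shifted term where $cl+dm=n$ with $3\mid l$, $3\mid m$ becomes $c l' + d m' = n/3$; the injectivity of $\tau$ mentioned before the theorem is what makes this substitution a bijection onto the relevant index set. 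That bookkeeping is the "hard part," but it is entirely routine and identical in structure to the $\lambda=4$ case already carried out.
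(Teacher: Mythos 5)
Your proposal is correct and follows essentially the same route as the paper's own proof: split off the $l=0$ and $m=0$ boundary terms, substitute the identity $s_4(n)=12\sigma(n)-36\sigma(\frac{n}{3})$, expand the product, and reindex the three shifted sums via $\tau$ with $\lambda=3$ to obtain $W_{(3c,d)}(n)$, $W_{(c,3d)}(n)$ and $W_{(c,d)}(\frac{n}{3})$. Your added remark on why the reindexing is a bijection (since $\sigma(\frac{l}{3})=0$ unless $3\mid l$) is a correct elaboration of a step the paper leaves implicit.
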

\begin{proof} It holds that 
\begin{multline*}
R_{(c,d)}(n) = \sum_{\substack{
{(l,m)\in\mathbb{N}_{0}^{2}} \\ {cl+dm=n}
}}s_{4}(l)s_{4}(m) 
  = s_{4}(\frac{n}{c})s_{4}(0) + s_{4}(0)s_{4}(\frac{n}{d}) 
+ \sum_{\substack{
{(l,m)\in\mathbb{N}^{2}} \\ {cl+dm=n}
}}s_{4}(l)s_{4}(m). 
\end{multline*}
We apply \autoref{representations-eqn-c_d-1} to derive 
\begin{multline*}
R_{(c,d)}(n) = 12\sigma(\frac{n}{c}) - 36\sigma(\frac{n}{3c}) + 12\sigma(\frac{n}{d}) -
36\sigma(\frac{n}{3d}) \\
   + \sum_{\substack{
{(l,m)\in\mathbb{N}^{2}} \\ {cl+dm=n}
 }} (12\sigma(l) -
  36\sigma(\frac{l}{3}))(12\sigma(m) - 36\sigma(\frac{m}{3})).
\end{multline*}
We know that 
\begin{multline*}
(12\sigma(l) - 36\sigma(\frac{l}{3}))(12\sigma(m) - 36\sigma(\frac{m}{3})) = 
144\sigma(l)\sigma(m) - 432\sigma(\frac{l}{3})\sigma(m) \\
   - 432\sigma(l)\sigma(\frac{m}{3})  + 1296\sigma(\frac{l}{3})\sigma(\frac{m}{3}).
\end{multline*}
We assume that the evaluation of 
\begin{equation*}
 W_{(c,d)}(n) = \sum_{\substack{
{(l,m)\in\mathbb{N}^{2}} \\ {cl+dm=n}
 }}\sigma(l)\sigma(m),
\end{equation*}
$W_{(c,3d)}(n)$ and $W_{(3c,d)}(n)$ are known.  
We set $\lambda=3$ in the sequel. We apply the function $\tau$ to $m$ to derive 
\begin{equation*}
\sum_{\substack{
{(l,m)\in\mathbb{N}^{2}} \\ {cl+dm=n}
 }}\sigma(l)\sigma(\frac{m}{3})  = 
 \sum_{\substack{
{(l,m)\in\mathbb{N}^{2}} \\ {cl+3d\,m=n}
 }}\sigma(l)\sigma(m)
 = W_{(c,3d)}(n). 
\end{equation*}
We make use of the function $\tau$ with $l$ as argument to conclude 
\begin{equation*}
\sum_{\substack{
{(l,m)\in\mathbb{N}^{2}} \\ {cl+dm=n}
 }}\sigma(m)\sigma(\frac{l}{3})  = 
\sum_{\substack{
{(l,m)\in\mathbb{N}^{2}} \\ {3c\,l+dm=n}
 }}\sigma(l)\sigma(m)
= W_{(3c,d)}(n).
\end{equation*}
We simultaneously apply apply the function $\tau$ to $l$ and to $m$ as arguments, 
respectively, to infer
\begin{equation*}
\sum_{\substack{
{(l,m)\in\mathbb{N}^{2}} \\ {cl+dm=n}
 }}\sigma(\frac{m}{3})\sigma(\frac{l}{3})  = 
\sum_{\substack{
{(l,m)\in\mathbb{N}^{2}} \\ {cl+dm=\frac{n}{3}}
 }}\sigma(l)\sigma(m)
  = W_{(c,d)}(\frac{n}{3}).
\end{equation*}

Finally, we bring all these evaluations together to obtain the stated result 
for $R_{(c,d)}(n)$.
\end{proof}


\section{Evaluation of the convolution sums when $\alpha\beta = 33, 40,56$ }
\label{convolution_33_40_56}

In this section, we give explicit formulae for the convolution sum 
$W_{(\alpha,\beta)}(n)$  when 
$\alpha\beta=33=3\cdot 11,\alpha\beta=40=2^{3}\cdot 5$ and 
$\alpha\beta=56=2^{3}\cdot 7$. 

When we apply \tM\ \cite[Lma 2.1.3, p.\ 41]{miyake1989}, we conclude that 
\begin{align}
\M_{4}(\Gamma_{0}(11))\subset\M_{4}(\Gamma_{0}(33)) \label{eqn-11-33} \\ 
\M_{4}(\Gamma_{0}(5))\subset\M_{4}(\Gamma_{0}(10))\subset\M_{4}(\Gamma_{0}(20))\subset
\M_{4}(\Gamma_{0}(40))  \label{eqn-5-40} \\ 
\M_{4}(\Gamma_{0}(8))\subset\M_{4}(\Gamma_{0}(40)). \label{eqn-8-40}  \\
\M_{4}(\Gamma_{0}(7))\subset\M_{4}(\Gamma_{0}(14))\subset\M_{4}(\Gamma_{0}(28))\subset 
\M_{4}(\Gamma_{0}(56))  \label{eqn-7-56} \\ 
\M_{4}(\Gamma_{0}(8))\subset\M_{4}(\Gamma_{0}(56)). \label{eqn-8-56}
\end{align}
This implies the same inclusion relation for the bases, the space of Eisenstein 
forms of weight $4$ and the spaces of cusp forms of weight $4$. 

\subsection{Bases of $\E_{4}(\Gamma_{0}(\alpha\beta))$ and
  $\S_{4}(\Gamma_{0}(\alpha\beta))$ for $\alpha\beta=33,40,56$}  
\label{convolution_33_40_56-gen}

We apply the dimension formulae in \tM 's book \cite[Thrm 2.5.2,~p.~60]{miyake1989} or 
\cite[Prop.~6.1, p.~91]{wstein}
to deduce that $\text{dim}(\S_{4}(\Gamma_{0}(33))=10,\  
\text{dim}(\S_{4}(\Gamma_{0}(40))=14$ and 
$\text{dim}(\S_{4}(\Gamma_{0}(56))=20$. 
We use \autoref{dimension-Eisenstein} to infer that  
$\text{dim}(\E_{4}(\Gamma_{0}(33)))=4$ and  $\text{dim}(\E_{4}(\Gamma_{0}(40)))=
\text{dim}(\E_{4}(\Gamma_{0}(56)))=8$. 

We apply \autoref{ligozat_theorem} as mentioned in the third paragraph of 
\hyperref[convolution_alpha_beta-bases]{Subsection \ref*{convolution_alpha_beta-bases}} 
to explicitly determine as many elements of 
$\EuScript{B}_{S,33}$, $\EuScript{B}_{S,40}$ and $\EuScript{B}_{S,56}$  
as possible. Then we apply \hyperref[basis-remark]{Remark \ref*{basis-remark}} 
\textbf{(r2)} when selecting basis elements of a given space of cusp forms as 
stated in the proof of \autoref{basisCusp_a_b} (b).  
\begin{corollary} \label{basisCusp_33_40_56}
\begin{enumerate}
\item[\textbf{(a)}] The sets $\EuScript{B}_{E,33}=\{\,M(q^{t})\,\mid ~ t|33\,\}, 
\>\EuScript{B}_{E,40}=\{\, M(q^{t})\,\mid ~ t|40\,\}$ and 
$\EuScript{B}_{E,56}=\{\, M(q^{t})\,\mid ~ t|56\,\}$
are bases of $\E_{4}(\Gamma_{0}(33))$, $\E_{4}(\Gamma_{0}(40))$ and 
$\E_{4}(\Gamma_{0}(56))$, respectively. 
\item[\textbf{(b)}] Let $1\leq i\leq 10$, $1\leq j\leq 14$, $1\leq k\leq 20$ 
be positive integers. 

Let $\delta_{1}\in D(33)$ and 
$(r(i,\delta_{1}))_{i,\delta_{1}}$ be the 
\autoref{convolutionSums-3_11-table} of the powers of $\eta(\delta_{1} z)$. 

Let $\delta_{2}\in D(40)$ and 
$(r(j,\delta_{2}))_{j,\delta_{2}}$ be the 
\autoref{convolutionSums-5_8-table} of the powers of $\eta(\delta_{2} z)$. 

Let $\delta_{3}\in D(56)$ and 
$(r(k,\delta_{3}))_{k,\delta_{3}}$ be the 
\autoref{convolutionSums-7_8-table} of the powers of $\eta(\delta_{3} z)$. 

Let furthermore  
\begin{gather*}
\EuFrak{B}_{33,i}(q)=\underset{\delta_{1}|33}{\prod}\eta^{r(i,\delta_{1})}(\delta_{1}
z), \quad  
\EuFrak{B}_{40,j}(q)=\underset{\delta_{2}|40}{\prod}\eta^{r(j,\delta_{2})}(\delta_{2}
z), \\  
\EuFrak{B}_{56,k}(q)=\underset{\delta_{3}|56}{\prod}\eta^{r(k,\delta_{3})}(\delta_{3}z) 
\end{gather*} 
be selected elements of 
$\S_{4}(\Gamma_{0}(33)), \> 
\S_{4}(\Gamma_{0}(40))\> \text{ and }\> \S_{4}(\Gamma_{0}(56))$, 
respectively. 

Then the sets 
\begin{gather*}
\EuScript{B}_{S,33}=\{\,\EuFrak{B}_{33,i}(q)\,\mid ~ 1\leq i\leq 10\,\},\quad   
\EuScript{B}_{S,40}=\{\,\EuFrak{B}_{40,j}(q)\,\mid ~ 1\leq j\leq 14\,\},\\ 
\EuScript{B}_{S,56}=\{\,\EuFrak{B}_{56,k}(q)\,\mid ~ 1\leq k\leq 20\,\} 
\end{gather*}
are bases 
of $\S_{4}(\Gamma_{0}(33))$, $\S_{4}(\Gamma_{0}(40))$ and 
$\S_{4}(\Gamma_{0}(56))$, repectively.
\item[\textbf{(c)}] The sets
  $\EuScript{B}_{M,33}=\EuScript{B}_{E,33}\cup\EuScript{B}_{S,33}$, 
$\EuScript{B}_{M,40}=\EuScript{B}_{E,40}\cup\EuScript{B}_{S,40}$ and 
$\EuScript{B}_{M,56}=\EuScript{B}_{E,56}\cup\EuScript{B}_{S,56}$
constitute bases of $\M_{4}(\Gamma_{0}(33))$, $\M_{4}(\Gamma_{0}(40))$ and 
$\M_{4}(\Gamma_{0}(56))$, respectively.
\end{enumerate}
\end{corollary}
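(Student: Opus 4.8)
The plan is to derive Corollary~\ref{basisCusp_33_40_56} as a direct specialization of Theorem~\ref{basisCusp_a_b} to the three levels $\alpha\beta = 33, 40, 56$, all of which satisfy condition \autoref{class1-conditions} since $33 = 3\cdot 11$, $40 = 2^{3}\cdot 5$ and $56 = 2^{3}\cdot 7$ are each of the form $2^{\nu}\mho$ with $\nu \in \{0,1,2,3\}$ and $\mho$ odd and squarefree. For part \textbf{(a)}, I would simply invoke Theorem~\ref{basisCusp_a_b}\,\textbf{(a)} with $\alpha\beta$ taken to be $33$, $40$, and $56$ in turn: in each case the set $\{M(q^{t}) \mid t \in D(\alpha\beta)\}$ is a basis of $\E_{4}(\Gamma_{0}(\alpha\beta))$, and $D(33) = \{1,3,11,33\}$, $D(40) = \{1,2,4,5,8,10,20,40\}$, $D(56) = \{1,2,4,7,8,14,28,56\}$ have $4$, $8$, $8$ elements respectively, matching the dimensions $\dim(\E_{4}(\Gamma_{0}(33))) = 4$ and $\dim(\E_{4}(\Gamma_{0}(40))) = \dim(\E_{4}(\Gamma_{0}(56))) = 8$ recorded above via \autoref{dimension-Eisenstein}.

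For part \textbf{(b)}, the argument is an application of Theorem~\ref{basisCusp_a_b}\,\textbf{(b)} together with the procedure described in its proof: having fixed the dimensions $m_{S} = 10, 14, 20$ for the three levels (from the dimension formulae of \tM\ \cite[Thrm 2.5.2, p.~60]{miyake1989}), I would exhibit the requisite eta quotients $\EuFrak{B}_{33,i}(q)$, $\EuFrak{B}_{40,j}(q)$, $\EuFrak{B}_{56,k}(q)$ as the products $\prod_{\delta \mid \alpha\beta}\eta^{r(i,\delta)}(\delta z)$ whose exponent vectors appear in Tables~\ref{convolutionSums-3_11-table}, \ref{convolutionSums-5_8-table}, \ref{convolutionSums-7_8-table}. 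Each such eta quotient lies in the relevant $\S_{4}(\Gamma_{0}(\alpha\beta))$ because its exponent vector satisfies conditions \textbf{(i)}--\textbf{(iv$'$)} of \autoref{ligozat_theorem} (which must be checked entry by entry from the tables, using that these are obtained from the exhaustive search), and one must also use the inclusions \autoref{eqn-11-33}--\autoref{eqn-8-56} to see that eta quotients natural to sublevels such as $11$, $5$, $8$, $7$ still qualify as cusp forms at levels $33$, $40$, $56$. Linear independence of each family of $m_{S}$ cusp forms then follows exactly as in the two-case argument in the proof of Theorem~\ref{basisCusp_a_b}\,\textbf{(b)}: order the $\EuFrak{B}_{\alpha\beta,i}$ by their smallest $q$-degree (computable via \hyperref[basis-remark]{Remark~\ref*{basis-remark}}~\textbf{(r2)}), reduce the associated $m_{S}\times m_{S}$ coefficient matrix $(\EuFrak{b}_{\alpha\beta,i}(n))$ to triangular or near-triangular form, and confirm its determinant is nonzero, swapping in alternative eta quotients from $\S_{4}(\Gamma_{0}(\alpha\beta))\setminus\EuScript{B}_{S}$ if the naive choice is degenerate. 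Part \textbf{(c)} is then immediate from the direct sum decomposition $\M_{4}(\Gamma_{0}(\alpha\beta)) = \E_{4}(\Gamma_{0}(\alpha\beta)) \oplus \S_{4}(\Gamma_{0}(\alpha\beta))$ together with parts \textbf{(a)} and \textbf{(b)}, exactly as in Theorem~\ref{basisCusp_a_b}\,\textbf{(c)}.

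The main obstacle is the bookkeeping in part \textbf{(b)}: one must actually produce, for level $56$ in particular, a list of $20$ eta quotients in $\S_{4}(\Gamma_{0}(56))$ whose leading exponents cover (or nearly cover) the degrees $1$ through $20$, and verify via \autoref{ligozat_theorem} that each lies in the space — this requires the exhaustive search to have turned up enough eta quotients with small enough leading order, which is not guaranteed a priori and is the reason the fallback ``Case 2'' substitution mechanism in the proof of Theorem~\ref{basisCusp_a_b}\,\textbf{(b)} is needed. Once the tables \ref{convolutionSums-3_11-table}--\ref{convolutionSums-7_8-table} are in hand and the finitely many Ligozat conditions are checked, the rest is the routine linear-algebra verification that each coefficient matrix is nonsingular, which I would relegate to the symbolic computation software mentioned in the introduction rather than carry out by hand.
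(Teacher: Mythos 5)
Your proposal is correct and follows essentially the same route as the paper: both specialize \autoref{basisCusp_a_b} to the levels $33$, $40$, $56$, obtain part (a) from the divisor sets, establish part (b) by verifying the Ligozat conditions for the tabulated eta quotients and then checking linear independence of the coefficient matrices via the Case 1/Case 2 mechanism from the proof of \autoref{basisCusp_a_b}(b), and deduce part (c) from the direct sum decomposition. The only cosmetic difference is that the paper explicitly assigns Case 1 to $\EuScript{B}_{S,40}$ and Case 2 to $\EuScript{B}_{S,33}$ and $\EuScript{B}_{S,56}$, whereas you leave that determination to the computation.
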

By \hyperref[basis-remark]{Remark \ref*{basis-remark}} \textbf{(r1)}, 
$\EuFrak{B}_{33,i}(q)$, $\EuFrak{B}_{40,j}(q)$ and $\EuFrak{B}_{56,k}(q)$    
can be expressed in the form 
$\underset{n=1}{\overset{\infty}{\sum}}\EuFrak{b}_{33,i}(n)q^{n}$, 
$\underset{n=1}{\overset{\infty}{\sum}}\EuFrak{b}_{40,j}(n)q^{n}$ and 
$\underset{n=1}{\overset{\infty}{\sum}}\EuFrak{b}_{56,k}(n)q^{n}$, respectively.

We observe that 
\begin{itemize}
	\item by \autoref{eqn-11-33} the basis element $\EuFrak{B}_{33,2}(q)$ 
	is in $\S_{4}(\Gamma_{0}(11))$ and is the only one. In addition, 
	$\EuFrak{B}_{33,6}(q)=\EuFrak{B}_{33,2}(q^{2})$. Hence, 
	$\EuFrak{b}_{33,6}(n)=\EuFrak{b}_{33,2}(\frac{n}{2})$. 
	
	\item the basis elements of $\S_{4}(\Gamma_{0}(40))$ have been determined 
	almost with respect to the inclusion relation \autoref{eqn-5-40}, except that 
	$\EuFrak{B}_{40,5}(q)$ results from the basis element of 
	$\S_{4}(\Gamma_{0}(8))$ according to \autoref{eqn-8-40}. 
	\item there is no element of $\S_{4}(\Gamma_{0}(7))$ which occurs as a 
	basis element of $\S_{4}(\Gamma_{0}(56))$. This indicates that an element 
	of $\S_{4}(\Gamma_{0}(7))$ cannot be determined when using 
	\autoref{ligozat_theorem}. Other than that, the inclusion relation 
	\autoref{eqn-7-56} and \autoref{eqn-8-56} preserve the bases.  
\end{itemize} 
\begin{proof} 
It follows immediately from \autoref{basisCusp_a_b}. 

In case \textbf{(a)}: the result is obtained when we set $n=1,3,11,33$, 
$n=1,2,4,5,8,10,20,40$ and $n=1,2,4,7,8,14,28,56$, respectively. 

In case \textbf{(b)}: the linear independence of the sets $\EuScript{B}_{S,33}$  
and $\EuScript{B}_{S,56}$ is proved by applying case 2 in the proof of 
\autoref{basisCusp_a_b} and by taking $n=1,2,3,4,5,6,7,8,9,10$ and  
$n=1,2,3,\ldots, 13,14$, respectively. Finally $\EuScript{B}_{S,40}$ is 
linearly independent by case 1 in the proof of \autoref{basisCusp_a_b} and by 
taking $n=1,2,3,\ldots, 19,20$. 

Therefore, we obtain the stated result. 
\end{proof}
 
\subsection{Evaluation of $W_{(\alpha,\beta)}(n)$ when $\alpha\beta=33,40,56$} 
\label{convolSum-w_33_40_56}

\begin{corollary} \label{lema-w_33_40_56}
We have  
\begin{multline}
( L(q) - 33\, L(q^{33}))^{2} 
 =  1024 + \sum_{n=1}^{\infty}\biggl(\, 
  \frac{2300736}{1271}\,\sigma_{3}(n) 
 - \frac{59459328}{77531}\,\sigma_{3}(\frac{n}{3})   \\
 + \frac{271016064}{1271}\,\sigma_{3}(\frac{n}{11})
- \frac{75206279808}{77531}\, \sigma_{3}(\frac{n}{33})
  - \frac{348480}{1271}\, \EuFrak{b}_{33,1}(n) 
 - \frac{14117760}{1271}\, \EuFrak{b}_{33,2}(n)   \\
- \frac{6573339072}{77531}\, \EuFrak{b}_{33,3}(n)    
- \frac{26803856448}{77531}\, \EuFrak{b}_{33,4}(n) 
- \frac{62014527936}{77531}\, \EuFrak{b}_{33,5}(n)   \\
- \frac{97134678144}{77531}\, \EuFrak{b}_{33,6}(n)    
- \frac{87378566400}{77531}\, \EuFrak{b}_{33,7}(n)
 - \frac{742808448}{1271}\, \EuFrak{b}_{33,8}(n)   \\
   + \frac{44352}{1271}\, \EuFrak{b}_{33,9}(n) 
  - \frac{4447872}{1271}\, \EuFrak{b}_{33,10}(n)  
\, \biggr)q^{n}, \label{convolSum-eqn-1_33}
\end{multline}
\begin{multline}
( 3\,L(q^{3}) - 11\, L(q^{11}))^{2} 
 = 64 + \sum_{n=1}^{\infty}\biggl(\, 
   - \frac{348480}{1271}\,\sigma_{3}(n)
  + \frac{106313472}{77531}\,\sigma_{3}(\frac{n}{3})    \\
  - \frac{34793088}{1271}\, \sigma_{3}(\frac{n}{11}) 
 + \frac{80875631232}{77531}\,\sigma_{3}(\frac{n}{33}) 
   + \frac{348480}{1271}\, \EuFrak{b}_{33,1}(n)    \\
   + \frac{3136320}{1271}\, \EuFrak{b}_{33,2}(n)  
 + \frac{1346173632}{77531}\, \EuFrak{b}_{33,3}(n)  
 + \frac{5361496704}{77531}\, \EuFrak{b}_{33,4}(n)    \\ 
 + \frac{11895235776}{77531}\, \EuFrak{b}_{33,5}(n)
 + \frac{17925551424}{77531}\, \EuFrak{b}_{33,6}(n) 
 + \frac{15428171520}{77531}\, \EuFrak{b}_{33,7}(n)    \\ 
  + \frac{127847808}{1271}\, \EuFrak{b}_{33,8}(n) 
   - \frac{44352}{1271}\, \EuFrak{b}_{33,9}(n) 
   + \frac{4447872}{1271}\, \EuFrak{b}_{33,10}(n)  
\,\biggr)q^{n},
\label{convolSum-eqn-3_11}
\end{multline}
\begin{multline}
( L(q) - 40\, L(q^{40}))^{2} 
 = 1521 + \sum_{n=1}^{\infty}\biggl(\, 
 \frac{26800}{117}\,\sigma_{3}(n) 
+ \frac{43520}{117}\,\sigma_{3}(\frac{n}{2})  \\
  + \frac{245120}{39}\,\sigma_{3}(\frac{n}{4}) 
  - \frac{26800}{117}\,\sigma_{3}(\frac{n}{5})
 - \frac{1766400}{13}\,\sigma_{3}(\frac{n}{8})
 - \frac{127760}{117}\,\sigma_{3}(\frac{n}{10})  \\
 - \frac{357440}{39}\,\sigma_{3}(\frac{n}{20})  
 + \frac{6558720}{13}\,\sigma_{3}(\frac{n}{40})
  + \frac{192224}{117}\,\EuFrak{b}_{40,1}(n)
  + \frac{439744}{117}\,\EuFrak{b}_{40,2}(n)  \\
  + \frac{304832}{39}\,\EuFrak{b}_{40,3}(n)  
 + \frac{1061120}{39}\,\EuFrak{b}_{40,4}(n)
 + \frac{41840}{3}\,\EuFrak{b}_{40,5}(n)
  - 15360\,\EuFrak{b}_{40,6}(n)  \\
 - \frac{24320}{3}\,\EuFrak{b}_{40,7}(n)  
 + \frac{1688320}{39}\,\EuFrak{b}_{40,8}(n)
 + 116800\,\EuFrak{b}_{40,9}(n)
- \frac{128000}{3}\,\EuFrak{b}_{40,10}(n)  \\
- \frac{485120}{3}\,\EuFrak{b}_{40,11}(n)  
- \frac{1130240}{3}\,\EuFrak{b}_{40,12}(n) 
- \frac{121280}{3}\,\EuFrak{b}_{40,13}(n)        
 - 69120\,\EuFrak{b}_{40,14}(n) 
\, \biggr)q^{n}, \label{convolSum-eqn-1_40}
\end{multline}
\begin{multline}
(5\, L(q^{5}) - 8\, L(q^{8}))^{2} 
 = 9 + \sum_{n=1}^{\infty}\biggl(\, 
   \frac{5920}{117}\,\sigma_{3}(n)
 - \frac{76000}{117}\,\sigma_{3}(\frac{n}{2}) \\
 - \frac{16960}{39}\,\sigma_{3}(\frac{n}{4}) 
 + \frac{668000}{117}\,\sigma_{3}(\frac{n}{5}) 
 + \frac{721920}{13}\,\sigma_{3}(\frac{n}{8}) 
 - \frac{8240}{117}\,\sigma_{3}(\frac{n}{10}) \\
 - \frac{95360}{39}\,\sigma_{3}(\frac{n}{20}) 
- \frac{721920}{13}\,\sigma_{3}(\frac{n}{40}) 
 - \frac{5920}{117}\,\EuFrak{b}_{40,1}(n)
 + \frac{22720}{117}\,\EuFrak{b}_{40,2}(n)  \\
 - \frac{59200}{39} \,\EuFrak{b}_{40,3}(n)
 + \frac{12800}{39}\,\EuFrak{b}_{40,4}(n)
- \frac{38800}{3} \,\EuFrak{b}_{40,5}(n)
 + 7680\,\EuFrak{b}_{40,6}(n)   \\
- \frac{47360}{3}\,\EuFrak{b}_{40,7}(n)
- \frac{505088}{39}\,\EuFrak{b}_{40,8}(n)
 - 67520\,\EuFrak{b}_{40,9}(n)
- \frac{12800}{3}\,\EuFrak{b}_{40,10}(n)  \\
+ \frac{113920}{3}\,\EuFrak{b}_{40,11}(n)
+ \frac{298240}{3}\,\EuFrak{b}_{40,12}(n)
+ \frac{63040}{3}\,\EuFrak{b}_{40,13}(n)
+ 69120 \,\EuFrak{b}_{40,14}(n) \,\biggr)q^{n},
\label{convolSum-eqn-5_8}
\end{multline}
\begin{multline}
( L(q) - 56\, L(q^{56}))^{2} 
 = 3025 + \sum_{n=1}^{\infty}\biggl(\, \frac{1284}{5}\,\sigma_{3}(n) 
    - 420\, \sigma_{3}(\frac{n}{2}) 
    + \frac{31584}{5}\,\sigma_{3}(\frac{n}{4}) \\
   - \frac{1764}{5}\,\sigma_{3}(\frac{n}{7})  
   - \frac{32256}{5}\, \sigma_{3}(\frac{n}{8})
    - 588\, \sigma_{3}(\frac{n}{14}) 
    - \frac{51744}{5}\,\sigma_{3}(\frac{n}{28})
    + \frac{3687936}{5}\, \sigma_{3}(\frac{n}{56})  \\
    + \frac{11916}{5}\,\EuFrak{b}_{56,1}(n)  
 + \frac{92604}{5} \,\EuFrak{b}_{56,2}(n)  
  + 29568 \,\EuFrak{b}_{56,3}(n)  
  + \frac{1140216}{5} \,\EuFrak{b}_{56,4}(n)  \\
 - 411936  \,\EuFrak{b}_{56,5}(n)  
 + \frac{2557632}{5} \,\EuFrak{b}_{56,6}(n)  
 + 223608 \,\EuFrak{b}_{56,7}(n)    
 + 3998400 \,\EuFrak{b}_{56,8}(n)  \\
 + 4042752  \,\EuFrak{b}_{56,9}(n)  
+ \frac{145152}{5} \,\EuFrak{b}_{56,10}(n) 
  - 8064  \,\EuFrak{b}_{56,11}(n)  
 - 48384 \,\EuFrak{b}_{56,12}(n)    \\
    + \frac{532224}{5} \,\EuFrak{b}_{56,14}(n)  
 +  161280 \,\EuFrak{b}_{56,15}(n)    
- \frac{225792}{5} \,\EuFrak{b}_{56,16}(n)  
 + 129024  \,\EuFrak{b}_{56,17}(n)     \\
+ \frac{2515968}{5} \,\EuFrak{b}_{56,18}(n)  
 + 1354752  \,\EuFrak{b}_{56,19}(n)  
 - 225792 \,\EuFrak{b}_{56,20}(n)   
\,\biggr)q^{n}, \label{convolSum-eqn-1_56}
\end{multline}
\begin{multline}
( 7\,L(q^{7}) - 8\, L(q^{8}))^{2} 
 = 1 + \sum_{n=1}^{\infty}\biggl(\, 
  - \frac{308}{25}\,\sigma_{3}(n)   
  + \frac{1876}{25}\,\sigma_{3}(\frac{n}{2})   
 - \frac{40096}{25}\, \sigma_{3}(\frac{n}{4})  \\
 + \frac{285908}{25}\, \sigma_{3}(\frac{n}{7})  
 \frac{409088}{25}\,\sigma_{3}(\frac{n}{8})   
 - \frac{27076}{25}\,\sigma_{3}(\frac{n}{14})   
 - \frac{60704}{25}\,\sigma_{3}(\frac{n}{28})  \\
- \frac{562688}{25}\,\sigma_{3}(\frac{n}{56})    
 + \frac{308}{25}\,\EuFrak{b}_{56,1}(n)    
 + \frac{2436}{25}\,\EuFrak{b}_{56,2}(n)   
 +\frac{11648}{25}\,\EuFrak{b}_{56,3}(n)    \\
 +\frac{121352}{25}\,\EuFrak{b}_{56,4}(n)   
 - \frac{101472}{25}\,\EuFrak{b}_{56,5}(n)    
 +\frac{288064}{25}\,\EuFrak{b}_{56,6}(n)   
 - \frac{87864}{25}\,\EuFrak{b}_{56,7}(n)   \\
+\frac{2190912}{25}\,\EuFrak{b}_{56,8}(n)    
+\frac{1821312}{25}\,\EuFrak{b}_{56,9}(n)   
+ \frac{201984}{25}\,\EuFrak{b}_{56,10}(n)   
+ 29568 \,\EuFrak{b}_{56,11}(n)             \\
\frac{284928}{5} \,\EuFrak{b}_{56,12}(n)
 - 59136 \,\EuFrak{b}_{56,13}(n) 
- \frac{1512192}{25}\,\EuFrak{b}_{56,14}(n)  
 +59136 \,\EuFrak{b}_{56,15}(n)              \\
+\frac{6724096}{25} \,\EuFrak{b}_{56,16}(n)   
 +118272 \,\EuFrak{b}_{56,17}(n)
+\frac{1616896}{25}\, \,\EuFrak{b}_{56,18}(n) \\  
 +430080\, \,\EuFrak{b}_{56,19}(n) 
 +53760\, \,\EuFrak{b}_{56,20}(n)  
\,\biggr)q^{n}.
\label{convolSum-eqn-7_8}
\end{multline}
\end{corollary}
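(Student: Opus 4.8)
The six identities are all instances of \autoref{convolution-lemma_a_b}, so the proof amounts to making the coefficients $X_{\delta}$ and $Y_{j}$ of that lemma explicit for each of the three levels. The plan is as follows. For $N=33$ the coprime factorisations with $\alpha\beta=N$ are $(\alpha,\beta)=(1,33)$ and $(3,11)$; for $N=40$ they are $(1,40)$ and $(5,8)$; and for $N=56$ they are $(1,56)$ and $(7,8)$. For each such pair \autoref{evalConvolClass-lema-1} gives $(\alpha\,L(q^{\alpha})-\beta\,L(q^{\beta}))^{2}\in\M_{4}(\Gamma_{0}(N))$, while \autoref{basisCusp_33_40_56} supplies the explicit basis $\EuScript{B}_{M,N}=\EuScript{B}_{E,N}\cup\EuScript{B}_{S,N}$ of that space, where $\EuScript{B}_{E,N}$ has $d(N)$ elements and $\EuScript{B}_{S,N}$ has $m_{S}$ elements with $m_{S}=10,14,20$ for $N=33,40,56$. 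Hence \autoref{convolution-lemma_a_b} applies and produces unique constants $X_{\delta}$ ($\delta\mid N$) and $Y_{j}$ ($1\le j\le m_{S}$) giving the asserted shape; the constant term already forces $\sum_{\delta\mid N}X_{\delta}=(\alpha-\beta)^{2}$, which accounts for the values $1024,\,64,\,1521,\,9,\,3025,\,1$ on the right-hand sides.

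To determine the remaining coefficients I would write out both sides of the identity in \autoref{convolution-lemma_a_b} as $q$-series. The left side is obtained directly from \autoref{evalConvolClass-eqn-3}, i.e. from $L(q)=1-24\sum_{n\ge 1}\sigma(n)q^{n}$, by squaring $\alpha\,L(q^{\alpha})-\beta\,L(q^{\beta})$; the right side is $\sum_{\delta\mid N}X_{\delta}M(q^{\delta})+\sum_{j=1}^{m_{S}}Y_{j}\EuFrak{B}_{N,j}(q)$, where $M(q^{\delta})$ comes from \autoref{evalConvolClass-eqn-4} and each cusp-form basis element $\EuFrak{B}_{N,j}(q)$ is expanded from its eta-quotient product using $\eta(\delta z)=q^{\delta/24}\prod_{k\ge 1}(1-q^{\delta k})$ together with the exponent tables referenced in \autoref{basisCusp_33_40_56}. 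Comparing the coefficients of $q^{n}$ for $1\le n\le d(N)+m_{S}$ (that is $n\le 14$, $n\le 22$, $n\le 28$ respectively) yields a square linear system in the $X_{\delta}$ and $Y_{j}$; its matrix is invertible precisely because $\EuScript{B}_{M,N}$ is a basis, so the system has a unique solution, namely the rationals displayed in \autoref{convolSum-eqn-1_33}--\autoref{convolSum-eqn-7_8}. Substituting those values into \autoref{convolution_a_b-eqn-0} and expanding $\sum_{\delta\mid N}X_{\delta}\sigma_{3}(n/\delta)$ into its individual divisor terms gives each of the six identities verbatim.

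The genuine difficulty here is computational, not structural: one must generate the $q$-expansions of up to twenty weight-$4$ eta quotients to order about $28$, then assemble and invert a linear system of size up to $28$, carrying the large exact rational coefficients without rounding error --- precisely the task for the symbolic-computation software mentioned in \hyperref[introduction]{Section \ref*{introduction}}. A secondary point that must be handled carefully is the choice of cusp-form basis itself: \autoref{ligozat_theorem} ordinarily produces more admissible eta quotients than $m_{S}$, and one must select a subset whose coefficient matrix at $n=1,\dots,m_{S}$ is nonsingular --- exactly the situation handled in Case 2 of the proof of \autoref{basisCusp_a_b}. The concrete selections are those recorded in the discussion following \autoref{basisCusp_33_40_56}; for instance $\EuFrak{B}_{33,6}(q)=\EuFrak{B}_{33,2}(q^{2})$, the element $\EuFrak{B}_{40,5}(q)$ is taken from $\S_{4}(\Gamma_{0}(8))$, and no form of $\S_{4}(\Gamma_{0}(7))$ occurs among the $\EuFrak{B}_{56,k}(q)$. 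Once the basis is fixed, uniqueness of the coordinates in \autoref{convolution-lemma_a_b} guarantees that the coefficients obtained are the ones stated, and the proof is complete.
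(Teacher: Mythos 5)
Your overall strategy coincides with the paper's: each identity is obtained by substituting the pair $(\alpha,\beta)$ into \autoref{convolution-lemma_a_b}, expanding both sides as $q$-series over the basis $\EuScript{B}_{M,N}$ of \autoref{basisCusp_33_40_56}, and solving the resulting linear system for the $X_{\delta}$ and $Y_{j}$ by exact symbolic computation; the paper's own proof is exactly this, stated in one line.

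However, one step as you describe it would fail. You propose to compare coefficients of $q^{n}$ over the consecutive range $1\le n\le d(N)+m_{S}$ (that is, $n\le 14$, $22$, $28$ for $N=33$, $40$, $56$) and assert that the resulting square matrix is invertible ``because $\EuScript{B}_{M,N}$ is a basis.'' Neither claim holds. The column of the unknown $X_{N}$ has entries $240\,\sigma_{3}(n/N)$, which vanish for every $n<N$; since $d(N)+m_{S}<N$ in all three cases ($14<33$, $22<40$, $28<56$), that column is identically zero on your chosen range, the matrix is singular, and $X_{N}$ --- hence the coefficient of $\sigma_{3}(\frac{n}{33})$, $\sigma_{3}(\frac{n}{40})$, $\sigma_{3}(\frac{n}{56})$ --- cannot be recovered. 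Linear independence of $\EuScript{B}_{M,N}$ guarantees only that \emph{some} finite collection of Fourier coefficients yields a full-rank system, not the first $d(N)+m_{S}$ of them. The paper avoids this by including indices divisible by the large divisors of $N$: \autoref{convolution-lemma_a_b} prescribes taking $n$ ranging over $D(\alpha\beta)$ in addition to $1\le n\le m_{S}$, and the proof of the corollary states explicitly that for level $40$ one takes $n\in\{1,2,\ldots,20,40,80\}$, giving $22$ equations in the $22$ unknowns. Alternatively, the constant-term relation $\sum_{\delta\mid N}X_{\delta}=(\alpha-\beta)^{2}$, which you invoke only to explain the constants $1024,\,64,\,1521,\,9,\,3025,\,1$, could be adjoined as an additional equation to pin down $X_{N}$. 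With the index set corrected, the remainder of your argument agrees with the paper's proof.
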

\begin{proof} 
These identities follow immediately 
on taking $(\alpha,\beta)=(1,33)$, $(3,11)$, $(1,40)$, $(5,8)$, $(1,56)$, 
$(7,8)$ in  
\hyperref[convolution-lemma_a_b]{Lemma \ref*{convolution-lemma_a_b}}.
In case $\alpha\beta=40$ we take all $n$ in $\{1,2,\ldots,20,40,80\}$ 
to obtain a system of $22$ linear equations with unknowns $X_{\delta}$ and $Y_{j}$, 
where $\delta\in D(40)$ and $1\leq j\leq 14$.
\end{proof}
We are now prepared to state and to prove our main result of this section. 
\begin{corollary} \label{convolSum-theor-w_33_40_56}
Let $n$ be a positive integer. Then 
\begin{align}
W_{(1,33)}(n)   = &
- \frac{13859}{335544}\sigma_{3}(n) 
   + \frac{51614}{2558523}\,\sigma_{3}(\frac{n}{3})    
  - \frac{7129}{1271}\,\sigma_{3}(\frac{n}{11})  
  + \frac{60271327}{1860744}\,\sigma_{3}(\frac{n}{33}) \notag \\  & 
  + (\frac{1}{24}-\frac{1}{132}n)\sigma(n) 
+ (\frac{1}{24}-\frac{1}{4}n)\sigma(\frac{n}{33}) 
 + \frac{55}{7626}\,\EuFrak{b}_{33,1}(n)   
  + \frac{4085}{13981}\,\EuFrak{b}_{33,2}(n)  \notag \\  &   
 + \frac{11412047}{5117046}\,\EuFrak{b}_{33,3}(n)   
 + \frac{15511491}{1705682}\,\EuFrak{b}_{33,4}(n)   
  + \frac{35888037}{1705682}\,\EuFrak{b}_{33,5}(n)     \notag \\  &   
  + \frac{28106099}{852841}\,\EuFrak{b}_{33,6}(n) 
  + \frac{25283150}{852841}\,\EuFrak{b}_{33,7}(n)    
   + \frac{214933}{13981}\,\EuFrak{b}_{33,8}(n)     \notag \\  & 
   - \frac{7}{7626}\,\EuFrak{b}_{33,9}(n)    
   + \frac{117}{1271}\,\EuFrak{b}_{33,10}(n),   
\label{convolSum-theor-w_1_33}  
\end{align} 
\begin{align}
W_{(3,11)}(n)  =  &
     \frac{55}{7626}\,\sigma_{3}(n) 
   + \frac{12869}{620248}\,\sigma_{3}(\frac{n}{3}) 
   + \frac{15089}{10168}\,\sigma_{3}(\frac{n}{11})
  - \frac{6382231}{232593}\,\sigma_{3}(\frac{n}{33})   \notag \\  & 
   + (\frac{1}{24} -\frac{1}{44}n)\sigma(\frac{1}{3}n) 
  + (\frac{1}{24}-\frac{1}{12}n)\sigma(\frac{n}{11})   
  - \frac{55}{7626}\, \EuFrak{b}_{33,1}(n)  
  - \frac{165}{2542}\, \EuFrak{b}_{33,2}(n)   \notag \\  & 
  - \frac{2337107}{5117046}\, \EuFrak{b}_{33,3}(n) 
  - \frac{1551359}{852841}\, \EuFrak{b}_{33,4}(n) 
  - \frac{6883817}{1705682}\, \EuFrak{b}_{33,5}(n)   \notag \\  & 
  - \frac{943053}{155062}\, \EuFrak{b}_{33,6}(n)  
  - \frac{4464170}{852841}\, \EuFrak{b}_{33,7}(n)  
  - \frac{3363}{1271}\, \EuFrak{b}_{33,8}(n)      \notag \\  & 
  + \frac{7}{7626}\, \EuFrak{b}_{33,9}(n)     
  - \frac{117}{1271}\, \EuFrak{b}_{33,10}(n),  
\label{convolSum-theor-w_3_11}  
 \end{align}
\begin{align}
W_{(1,40)}(n)  =  & 
  \frac{1}{4212}\,\sigma_{3}(n)
   - \frac{17}{2106}\,\sigma_{3}(\frac{n}{2})
   - \frac{383}{2808}\,\sigma_{3}(\frac{n}{4})
  + \frac{335}{67392}\,\sigma_{3}(\frac{n}{5})  
  + \frac{115}{39}\,\sigma_{3}(\frac{n}{8})    \notag \\  & 
  + \frac{1597}{67392}\,\sigma_{3}(\frac{n}{10})
   + \frac{1117}{5616}\,\sigma_{3}(\frac{n}{20})
   - \frac{34}{13}\,\sigma_{3}(\frac{n}{40}) 
 + (\frac{1}{24}-\frac{1}{160}n)\sigma(n)   \notag \\  & 
 + (\frac{1}{24}-\frac{1}{4}n)\sigma(\frac{n}{40})    
  - \frac{6007}{168480}\,\EuFrak{b}_{40,1}(n)
  - \frac{6871}{84240}\,\EuFrak{b}_{40,2}(n) 
  - \frac{4763}{28080}\,\EuFrak{b}_{40,3}(n)  \notag \\  & 
   - \frac{829}{1404}\,\EuFrak{b}_{40,4}(n)
   - \frac{523}{1728}\,\EuFrak{b}_{40,5}(n)
   + \frac{1}{3}\,\EuFrak{b}_{40,6}(n)    
   + \frac{19}{108}\,\EuFrak{b}_{40,7}(n)   \notag \\  &  
  - \frac{1319}{1404}\,\EuFrak{b}_{40,8}(n)
   - \frac{365}{144}\,\EuFrak{b}_{40,9}(n)
   + \frac{25}{27}\,\EuFrak{b}_{40,10}(n)  
   + \frac{379}{108}\,\EuFrak{b}_{40,11}(n)   \notag \\  & 
   + \frac{883}{108}\,\EuFrak{b}_{40,12}(n)  
   + \frac{379}{432}\,\EuFrak{b}_{40,13}(n)
   + \frac{3}{2}\,\EuFrak{b}_{40,14}(n),
\label{convolSum-theor-w_1_40}  
\end{align} 
\begin{align}
W_{(5,8)}(n)  =  & 
 - \frac{37}{33696}\,\sigma_{3}(n)
 +  \frac{475}{33696}\,\sigma_{3}(\frac{n}{2})
 +  \frac{53}{5616}\,\sigma_{3}(\frac{n}{4})
 +  \frac{425}{67392}\,\sigma_{3}(\frac{n}{5})  
 - \frac{34}{39}\,\sigma_{3}(\frac{n}{8})     \notag \\  & 
 +  \frac{103}{67392}\,\sigma_{3}(\frac{n}{10})
 + \frac{149}{2808}\,\sigma_{3}(\frac{n}{20})
 +  \frac{47}{39}\,\sigma_{3}(\frac{n}{40})
 + (\frac{1}{24}-\frac{1}{32}n)\sigma(\frac{n}{5}) \notag \\  & 
 + (\frac{1}{24}-\frac{1}{20}n)\sigma(\frac{n}{8})    
  + \frac{37}{33696}\,\EuFrak{b}_{40,1}(n)
  - \frac{71}{16848}\,\EuFrak{b}_{40,2}(n)  
 + \frac{185}{5616}\,\EuFrak{b}_{40,3}(n)   \notag \\  & 
  - \frac{5}{702}\,\EuFrak{b}_{40,4}(n)  
 + \frac{485}{1728}\,\EuFrak{b}_{40,5}(n)
 - \frac{1}{6}\,\EuFrak{b}_{40,6}(n)  
  + \frac{37}{108}\,\EuFrak{b}_{40,7}(n)
 + \frac{1973}{7020}\,\EuFrak{b}_{40,8}(n)  \notag \\  & 
  + \frac{211}{144}\,\EuFrak{b}_{40,9}(n)
 + \frac{5}{54}\,\EuFrak{b}_{40,10}(n)  
  - \frac{89}{108}\,\EuFrak{b}_{40,11}(n)
 - \frac{233}{108}\,\EuFrak{b}_{40,12}(n)  \notag \\  & 
 - \frac{197}{432}\,\EuFrak{b}_{40,13}(n)
 - \frac{3}{2}\,\EuFrak{b}_{40,14}(n), 
\label{convolSum-theor-w_5_8}  
\end{align}
\begin{align}
W_{(1,56)}(n)   =  & 
- \frac{1}{3840}\,\sigma_{3}(n)  
  + \frac{5}{768}\,\sigma_{3}(\frac{n}{2}) 
  - \frac{47}{480}\,\sigma_{3}(\frac{n}{4})   
  + \frac{7}{1280}\,\sigma_{3}(\frac{n}{7}) 
  + \frac{1}{10}\,\sigma_{3}(\frac{n}{8})  \notag \\  & 
  + \frac{7}{768}\,\sigma_{3}(\frac{n}{14})   
  + \frac{77}{480}\,\sigma_{3}(\frac{n}{28})          
  + \frac{7}{30}\,\sigma_{3}(\frac{n}{56})  
  + (\frac{1}{24} - \frac{1}{224}n)\sigma(n) \notag \\  & 
  + (\frac{1}{24} - \frac{1}{4}n)\sigma(\frac{n}{56}) 
  - \frac{331}{8960} \,\EuFrak{b}_{56,1}(n) 
  - \frac{7717}{26880}\,\EuFrak{b}_{56,2}(n) 
   - \frac{11}{24} \,\EuFrak{b}_{56,3}(n)   \notag \\  & 
  - \frac{6787}{1920} \,\EuFrak{b}_{56,4}(n)  
  + \frac{613}{96} \,\EuFrak{b}_{56,5}(n)     
  - \frac{1903}{240} \,\EuFrak{b}_{56,6}(n)  
  - \frac{1331}{384} \,\EuFrak{b}_{56,7}(n)  \notag \\  & 
  - \frac{2975}{48} \,\EuFrak{b}_{56,8}(n)    
  - \frac{188}{3} \,\EuFrak{b}_{56,9}(n)    
    - \frac{9}{20} \,\EuFrak{b}_{56,10}(n) 
   + \frac{1}{8} \,\EuFrak{b}_{56,11}(n)  
   + \frac{3}{4} \,\EuFrak{b}_{56,12}(n)    \notag \\  & 
  - \frac{33}{20} \,\EuFrak{b}_{56,14}(n)  
   - \frac{5}{2} \,\EuFrak{b}_{56,15}(n) 
   \frac{7}{10} \,\EuFrak{b}_{56,16}(n)    
    - 2 \,\EuFrak{b}_{56,17}(n)  
  - \frac{39}{5} \,\EuFrak{b}_{56,18}(n)   \notag \\  & 
   - 21 \,\EuFrak{b}_{56,19}(n)          
   + \frac{7}{2} \,\EuFrak{b}_{56,20}(n),  
\label{convolSum-theor-w_1_56}  
\end{align} 
\begin{align}
W_{(7,8)}(n)  =  & 
\frac{11}{57600}\,\sigma_{3}(n) 
  - \frac{67}{57600}\,\sigma_{3}(\frac{n}{2}) 
  + \frac{179}{7200}\,\sigma_{3}(\frac{n}{4})  
  + \frac{289}{57600}\,\sigma_{3}(\frac{n}{7})  
  - \frac{7}{450}\,\sigma_{3}(\frac{n}{8})     \notag \\  & 
  + \frac{967}{57600}\,\sigma_{3}(\frac{n}{14})
  + \frac{271}{7200}\,\sigma_{3}(\frac{n}{28}) 
  + \frac{157}{450}\,\sigma_{3}(\frac{n}{56})  
  + (\frac{1}{24} - \frac{1}{32}n)\sigma(\frac{n}{7}) \notag \\  & 
  + (\frac{1}{24} - \frac{1}{28}n)\sigma(\frac{n}{8})   
  - \frac{11}{57600} \,\EuFrak{b}_{56,1}(n)  
  - \frac{29}{19200} \,\EuFrak{b}_{56,2}(n) 
  - \frac{13}{1800} \,\EuFrak{b}_{56,3}(n)    \notag \\  & 
  - \frac{2167}{28800} \,\EuFrak{b}_{56,4}(n)  
  + \frac{151}{2400} \,\EuFrak{b}_{56,5}(n)   
   - \frac{643}{3600} \,\EuFrak{b}_{56,6}(n)  
  + \frac{523}{9600} \,\EuFrak{b}_{56,7}(n)  \notag \\  & 
 - \frac{11411}{8400} \,\EuFrak{b}_{56,8}(n) 
 - \frac{1581}{1400} \,\EuFrak{b}_{56,9}(n) 
  - \frac{263}{2100} \,\EuFrak{b}_{56,10}(n)  
  - \frac{11}{24} \,\EuFrak{b}_{56,11}(n)   \notag \\  & 
  - \frac{53}{60} \,\EuFrak{b}_{56,12}(n)   
  + \frac{11}{12} \,\EuFrak{b}_{56,13}(n) 
  + \frac{1969}{2100} \,\EuFrak{b}_{56,14}(n)  
  - \frac{11}{12} \,\EuFrak{b}_{56,15}(n)     \notag \\  & 
 - \frac{13133}{3150} \,\EuFrak{b}_{56,16}(n)  
 - \frac{11}{6} \,\EuFrak{b}_{56,17}(n) 
 - \frac{1579}{1575} \,\EuFrak{b}_{56,18}(n)  
 - \frac{20}{3} \,\EuFrak{b}_{56,19}(n)       \notag \\  & 
  - \frac{5}{6} \,\EuFrak{b}_{56,20}(n). 
\label{convolSum-theor-w_7_8}  
 \end{align}
\end{corollary}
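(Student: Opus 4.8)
The plan is to specialize \autoref{convolution_a_b} to each of the six coprime factorizations $(\alpha,\beta)\in\{(1,33),(3,11),(1,40),(5,8),(1,56),(7,8)\}$. Note that $33=1\cdot 33=3\cdot 11$, $40=1\cdot 40=5\cdot 8$ and $56=1\cdot 56=7\cdot 8$, so each of the three levels $\alpha\beta\in\{33,40,56\}$ carries exactly two of these pairs, and the explicit bases $\EuScript{B}_{M,\alpha\beta}=\EuScript{B}_{E,\alpha\beta}\cup\EuScript{B}_{S,\alpha\beta}$ of $\M_{4}(\Gamma_{0}(\alpha\beta))$ provided by \autoref{basisCusp_33_40_56} serve both members of the corresponding pair simultaneously. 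Each of the six levels has the shape $2^{\nu}\mho$ with $\nu\in\{0,3\}$ and $\mho$ odd and squarefree, so the hypotheses of \autoref{convolution-lemma_a_b} and \autoref{convolution_a_b} are satisfied.

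For each pair I would first invoke \autoref{convolution-lemma_a_b} to obtain uniquely determined constants $X_{\delta},Y_{j}\in\mathbb{C}$ (with $\delta\mid\alpha\beta$ and $1\leq j\leq m_{S}$) such that $(\alpha\,L(q^{\alpha})-\beta\,L(q^{\beta}))^{2}=\sum_{\delta\mid\alpha\beta}X_{\delta}M(q^{\delta})+\sum_{j}Y_{j}\EuFrak{B}_{\alpha\beta,j}(q)$. These constants are precisely the data already exhibited in \autoref{lema-w_33_40_56}: comparing $q$-expansions with \autoref{convolution_a_b-eqn-0} shows that the coefficient of $\sigma_{3}(n/\delta)$ displayed there equals $240\,X_{\delta}$ (whence $\sum_{\delta}X_{\delta}$ is the displayed constant $(\alpha-\beta)^{2}$) and the coefficient of $\EuFrak{b}_{\alpha\beta,j}(n)$ equals $Y_{j}$. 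In effect the $X_{\delta},Y_{j}$ are the solution of the linear system already solved in the proof of \autoref{lema-w_33_40_56}: $d(33)+\dim\S_{4}(\Gamma_{0}(33))=4+10=14$ equations for level $33$, $8+14=22$ equations for level $40$ (from $n\in\{1,\dots,20,40,80\}$), and $8+20=28$ equations for level $56$.

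It then remains to substitute these values into the closed form of \autoref{convolution_a_b},
\begin{align*}
W_{(\alpha,\beta)}(n)=&-\frac{5}{24\alpha\beta}\sum_{\substack{\delta\mid\alpha\beta\\ \delta\neq\alpha,\beta}}X_{\delta}\,\sigma_{3}(\tfrac{n}{\delta})+\frac{5}{24\alpha\beta}(\alpha^{2}-X_{\alpha})\,\sigma_{3}(\tfrac{n}{\alpha})+\frac{5}{24\alpha\beta}(\beta^{2}-X_{\beta})\,\sigma_{3}(\tfrac{n}{\beta})\\&-\sum_{j}\frac{Y_{j}}{1152\alpha\beta}\,\EuFrak{b}_{\alpha\beta,j}(n)+\bigl(\tfrac{1}{24}-\tfrac{n}{4\beta}\bigr)\sigma(\tfrac{n}{\alpha})+\bigl(\tfrac{1}{24}-\tfrac{n}{4\alpha}\bigr)\sigma(\tfrac{n}{\beta}),
\end{align*}
and to simplify, reading $\sigma_{3}(n/1)=\sigma_{3}(n)$ and $\sigma(n/1)=\sigma(n)$ in the cases with $\alpha=1$. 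One must be careful with the $X_{\alpha},X_{\beta}$ corrections to the $\sigma_{3}(n/\alpha)$ and $\sigma_{3}(n/\beta)$ coefficients and with the $\alpha\leftrightarrow\beta$ asymmetry in the two trailing $\sigma$-terms (for instance, in \autoref{convolSum-theor-w_3_11} the term $(\tfrac{1}{24}-\tfrac{1}{44}n)\sigma(\tfrac{n}{3})$ arises from $\alpha=3$, $\beta=11$). I expect the only genuine obstacle to be the sheer bulk of the rational arithmetic — denominators such as $1271=31\cdot 41$, $77531$, $67392$ and $57600$ build up quickly — which is handled with the computer-algebra software mentioned in the introduction; relations like $\EuFrak{b}_{33,6}(n)=\EuFrak{b}_{33,2}(n/2)$ noted after \autoref{basisCusp_33_40_56} serve as useful consistency checks on the final output.
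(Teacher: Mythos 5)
Your proposal matches the paper's proof: the corollary is obtained exactly by specializing \autoref{convolution_a_b} to the six pairs $(\alpha,\beta)=(1,33),(3,11),(1,40),(5,8),(1,56),(7,8)$, with the constants $X_{\delta},Y_{j}$ read off from \autoref{lema-w_33_40_56} (where the displayed coefficient of $\sigma_{3}(n/\delta)$ is $240\,X_{\delta}$ and that of $\EuFrak{b}_{\alpha\beta,j}(n)$ is $Y_{j}$), followed by routine rational arithmetic. The identification of the linear systems and the bookkeeping of the $\alpha\leftrightarrow\beta$ asymmetry are all as in the paper.
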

\begin{proof} 
These identities follow from \autoref{convolution_a_b} when we set  
$(\alpha,\beta)=(1,33),(3,11)$, $(1,40),(5,8)$, $(1,56),(7,8)$.
\end{proof}


\section{Re-evaluation of the convolution sums for $\alpha\beta=10,11,12,15,24$}
\label{re-evaluation}
We revisit the convolution sums established by 
\begin{itemize}
	\item \eR \cite[Thrm 1.1]{royer}, and \sC\ and \dY\ \cite[Thrm 2.1]{cooper_ye2014} for 
	$\alpha\beta=10$ 
	\item \eR \cite[Thrm 1.3]{royer} for $\alpha\beta=11$ 
	\item \aA\ et al.\ \cite{alaca_alaca_williams2006,alaca_alaca_williams2007a} for $\alpha\beta=12$, $24$ 
	\item \bR\ and \bS\ \cite{ramakrishnan_sahu} for $\alpha\beta=15$
\end{itemize}
using modular forms. The obtained results in each case are immediate corollaries 
of \autoref{convolution_a_b} 
and improve the previous ones since we use the exact number of basis elements 
of the space of cusp forms in case of $\alpha\beta=12$, $24$. The improvement of the previous results in case of $\alpha\beta=11$, $15$ is obvious.

Since $\alpha\beta=10=2\cdot 5$ and because of  
\autoref{eqn-5-40} it holds that $\EuFrak{B}_{40,2}(q)=\EuFrak{B}_{40,1}(q^{2})$, 
and therefore $\EuFrak{b}_{40,2}(n)=\EuFrak{b}_{40,1}(\frac{n}{2})$. Our 
third basis element of the space $\S_{4}(\Gamma_{0}(10))$ is different 
from the one used by \sC\ and \dY\ \cite{cooper_ye2014}, 
which explains the difference in the two results. However, since the change 
of basis is an automorphism, both results are the same.

In addition to the basis element $\EuFrak{B}_{33,2}(q)$ of the space 
$\S_{4}(\Gamma_{0}(11))$, we use 
$\EuFrak{B}'_{33,1}(q)=\eta^{2}(z)\eta^{2}(11z)=\underset{n=1}{\overset{\infty}{\sum}}\EuFrak{b}'_{33,1}(n)q^{n}$ which is a basis element of 
$\S_{2}(\Gamma_{0}(11))$. 

\bR\ and \bS\ achieve the evaluation of the convolution sums for $\alpha\beta=15$ 
using a basis which contains one cusp form of weight $2$. 
We consider the following $\eta$-quotients as basis elements of the space 
$\S_{4}(\Gamma_0(15))$ 
\begin{longtable}{rclcrcl}
$\EuFrak{B}_{15,1}(q)$ & = & $\eta^{4}(z)\eta^{4}(5z)$ & ~ &
$\EuFrak{B}_{15,2}(q)$ & = & $\eta^{2}(z)\eta^{2}(3z)\eta^{2}(5z)\eta^{2}(15z)$ \\
 ~ \\
$\EuFrak{B}_{15,3}(q)$ & = & $\eta^{4}(3z)\eta^{4}(15z)$  & ~ &
$\EuFrak{B}_{15,4}(q)$ & = & $\frac{\eta^{3}(z)\eta(3z)\eta^{7}(15z)}{\eta^{3}(5z)}$. 
\end{longtable}
The following $\eta$-quotients build a basis of $\S_{4}(\Gamma_0(24))$. 
\begin{longtable}{rclcrcl}
$\EuFrak{B}_{24,1}(q)$ & = & $\eta^{2}(z)\eta^{2}(2z)\eta^{2}(3z)\eta^{2}(6z)$ \\ 
$\EuFrak{B}_{24,2}(q)$ & = & $\eta^{2}(2z)\eta^{2}(4z)\eta^{2}(6z)\eta^{2}(12z)$ & ~&
$\EuFrak{B}_{24,3}(q)$ & = & $\frac{\eta^{4}(2z)\eta^{6}(12z)}{\eta^{2}(4z)}$  \\
$\EuFrak{B}_{24,4}(q)$ & = & $\eta^{2}(4z)\eta^{2}(8z)\eta^{2}(12z)\eta^{2}(24z)$ \\ 
  ~  \\
$\EuFrak{B}_{24,5}(q)$ & = & $\frac{\eta^{2}(2z)\eta^{2}(8z)\eta^{6}(12z)\eta^{2}(24z)}{\eta^{2}(4z)\eta^{2}(6z)}$ & ~ &
$\EuFrak{B}_{24,6}(q)$ & = & $\frac{\eta^{4}(4z)\eta^{6}(24z)}{\eta^{2}(8z)}$ \\
 ~ \\
$\EuFrak{B}_{24,7}(q)$ & = & $\frac{\eta^{2}(2z)\eta^{4}(12z)\eta^{6}(24z)}{\eta^{2}(6z)\eta^{2}(8z)}$  \\ 
  ~  \\
$\EuFrak{B}_{24,8}(q)$ & = & $\frac{\eta^{3}(3z)\eta^{3}(4z)\eta^{9}(24z)}{\eta(z)\eta(2z)\eta(6z)\eta^{3}(8z)\eta(12z)}$ 
\end{longtable}
Note that the space $\S_{4}(\Gamma_0(12))$ is a subspace of the space 
$\S_{4}(\Gamma_0(24))$. 

Basis elements of $\S_{4}(\Gamma_0(15))$ and $\S_{4}(\Gamma_0(24))$ can be 
expressed in the form 
$\EuFrak{B}_{15,j}(q)=\underset{n=1}{\overset{\infty}{\sum}}\EuFrak{b}_{15,j}(n)q^{n}$ 
and 
$\EuFrak{B}_{24,l}(q)=\underset{n=1}{\overset{\infty}{\sum}}\EuFrak{b}_{24,l}(n)q^{n}$, 
where $1\leq j\leq 4$ and $1\leq l\leq 8$, respectively. 
It holds that $\EuFrak{B}_{15,3}(q)=\EuFrak{B}_{15,1}(q^{2})$,  and  
$\EuFrak{B}_{24,2}(q)=\EuFrak{B}_{24,1}(q^{2})$, 
$\EuFrak{B}_{24,4}(q)=\EuFrak{B}_{24,1}(q^{4})$  and   
$\EuFrak{B}_{24,6}(q)=\EuFrak{B}_{24,3}(q^{2})$.  
Therefore $\EuFrak{b}_{15,3}(n)=\EuFrak{b}_{15,1}(\frac{n}{2})$,  and 
$\EuFrak{b}_{24,2}(n)=\EuFrak{b}_{24,1}(\frac{n}{2})$,
$\EuFrak{b}_{24,4}(n)=\EuFrak{b}_{24,1}(\frac{n}{4})$  and  
$\EuFrak{b}_{24,6}(n)=\EuFrak{b}_{24,3}(\frac{n}{2})$.
\begin{corollary} \label{lema-w_1-4_10}
We have  
\begin{multline}
( L(q) - 10\, L(q^{10}))^{2}  
 = 81 + \sum_{n=1}^{\infty}\biggl(\, 
  \frac{2640}{13}\,\sigma_{3}(n)
- \frac{1920}{13}\,\sigma_{3}(\frac{n}{2})
- \frac{12000}{13}\,\sigma_{3}(\frac{n}{5})     \\
+ \frac{264000}{13}\,\sigma_{3}(\frac{n}{10})   
 + \frac{2976}{13}\,\EuFrak{b}_{40,1}(n) 
+ \frac{14400}{13}\,\EuFrak{b}_{40,2}(n)       
 - 960\,\EuFrak{b}_{40,3}(n)  
 \biggr)q^{n}, \label{convolSum-eqn-1_10}  
\end{multline}
\begin{multline}
( 2\,L(q^{2}) - 5\, L(q^{5}))^{2}  
 = 9 + \sum_{n=1}^{\infty}\biggl(\, 
- \frac{480}{13}\,\sigma_{3}(n)
+ \frac{10560}{13}\,\sigma_{3}(\frac{n}{2})
+ \frac{66000}{13}\,\sigma_{3}(\frac{n}{5})    \\
- \frac{48000}{13}\,\sigma_{3}(\frac{n}{10})   
+  \frac{480}{13}\,\EuFrak{b}_{40,1}(n) 
 - \frac{576}{13}\,\EuFrak{b}_{40,2}(n)
+ 960\,\EuFrak{b}_{40,3}(n)  
 \biggr)q^{n}, \label{convolSum-eqn-2_5}  
\end{multline}
\begin{multline}
( L(q) - 11\, L(q^{11}))^{2}  
 = 100 + \sum_{n=1}^{\infty}\biggl(\, 
 \frac{6240}{49}\,\sigma_{3}(n)
 + \frac{5524320}{49}\,\sigma_{3}(\frac{n}{11}) 
+ \frac{17280}{49}\, \EuFrak{b}'_{33,1}(n)  \\
 + \frac{77184}{49}\, \EuFrak{b}_{33,2}(n) 
 \biggr)q^{n}, \label{convolSum-eqn-1_11}  
\end{multline}
\begin{multline}
( L(q) - 12\, L(q^{12}))^{2}  
 = 121 + \sum_{n=1}^{\infty}\biggl(\, 
  \frac{1056}{5}\,\sigma_{3}(n)
 - \frac{432}{5}\,\sigma_{3}(\frac{n}{2})
- \frac{1296}{5}\,\sigma_{3}(\frac{n}{3})   \\
- \frac{2304}{5}\,\sigma_{3}(\frac{n}{4})
- \frac{3888}{5}\,\sigma_{3}(\frac{n}{6})
 + \frac{152064}{5}\,\sigma_{3}(\frac{n}{12})
 + \frac{1584}{5}\,\EuFrak{b}_{24,1}(n)  
 + \frac{4896}{5}\,\EuFrak{b}_{24,2}(n)    \\
 + 864\,\EuFrak{b}_{24,3}(n)  
 \biggr)q^{n}, \label{convolSum-eqn-1_12}  
\end{multline}
\begin{multline}
( 3\,L(q^{3}) - 4\,L(q^{4}))^{2}  
 = 1 + \sum_{n=1}^{\infty}\biggl(\, 
 - \frac{144}{5}\,\sigma_{3}(n)
 - \frac{432}{5}\,\sigma_{3}(\frac{n}{2}) 
 + \frac{9504}{5}\,\sigma_{3}(\frac{n}{3})    \\
 + \frac{16896}{5}\,\sigma_{3}(\frac{n}{4}) 
- \frac{3888}{5}\,\sigma_{3}(\frac{n}{6}) 
- \frac{20736}{5}\,\sigma_{3}(\frac{n}{12}) 
 + \frac{144}{5}\,\EuFrak{b}_{24,1}(n)  
 + \frac{2016}{5}\,\EuFrak{b}_{24,2}(n)    \\
 - 864\,\EuFrak{b}_{24,3}(n)  
 \biggr)q^{n}, \label{convolSum-eqn-3_4}  
\end{multline}
\begin{multline}
( L(q) - 15\, L(q^{15}))^{2}  
 = 196 + \sum_{n=1}^{\infty}\biggl(\, 
  \frac{2976}{13}\,\sigma_{3}(n)
 - \frac{3456}{13}\,\sigma_{3}(\frac{n}{3})
- \frac{144000}{13}\,\sigma_{3}(\frac{n}{5})      \\
+ \frac{756000}{13}\,\sigma_{3}(\frac{n}{15})
+  \frac{5760}{13}\,\EuFrak{b}_{15,1}(n) 
+ 2304\,\EuFrak{b}_{15,2}(n) 
 + \frac{48384}{13}\,\EuFrak{b}_{15,3}(n)    
 - 3456\,\EuFrak{b}_{15,4}(n) 
 \biggr)q^{n}, \label{convolSum-eqn-1_15}  
\end{multline}
\begin{multline}
( 3\,L(q^{3}) - 5\, L(q^{5}))^{2}  
 = 4 + \sum_{n=1}^{\infty}\biggl(\, 
 - \frac{576}{13}\,\sigma_{3}(n)
+  \frac{25056}{13}\,\sigma_{3}(\frac{n}{3})
+ \frac{204000}{13}\,\sigma_{3}(\frac{n}{5})    \\
- \frac{216000}{13}\,\sigma_{3}(\frac{n}{15})
 +  \frac{576}{13}\,\EuFrak{b}_{15,1}(n)
+  576\,\EuFrak{b}_{15,2}(n)
 + \frac{8640}{13}\,\EuFrak{b}_{15,3}(n) 
+ 3456\,\EuFrak{b}_{15,4}(n)  
 \biggr)q^{n}. \label{convolSum-eqn-3_5}  
\end{multline}
\begin{multline}
( L(q) - 24\, L(q^{24}))^{2}  
 = 529 + \sum_{n=1}^{\infty}\biggl(\, 
   672\,\sigma_{3}(n)  
 + \frac{33264}{5}\,\sigma_{3}(\frac{n}{2})
  - 576\,\sigma_{3}(\frac{n}{3})            \\
- \frac{36576}{5}\,\sigma_{3}(\frac{n}{4})  
- \frac{35424}{5}\,\sigma_{3}(\frac{n}{6})
 - \frac{4608}{5}\,\sigma_{3}(\frac{n}{8})
 + \frac{27936}{5}\,\sigma_{3}(\frac{n}{12})
+ \frac{649728}{5}\,\sigma_{3}(\frac{n}{24})   \\
  + 432\,\EuFrak{b}_{24,1}(n)    
- \frac{44064}{5}\,\EuFrak{b}_{24,2}(n)  
  - 8640 \,\EuFrak{b}_{24,3}(n)  
- \frac{508608}{5}\,\EuFrak{b}_{24,4}(n)  \\
 - 55296\,\EuFrak{b}_{24,5}(n)    
 - 316224\,\EuFrak{b}_{24,6}(n)   
 - 276480\,\EuFrak{b}_{24,7}(n)    
 - 857088\,\EuFrak{b}_{24,8}(n)   
 \biggr)q^{n}, \label{convolSum-eqn-1_24}  
\end{multline}
\begin{multline}
( 3\,L(q^{3}) - 8\, L(q^{8}))^{2}  
 = 25 + \sum_{n=1}^{\infty}\biggl(\, 
 + \frac{864}{5}\,\sigma_{3}(\frac{n}{2})      
 + 2016\,\sigma_{3}(\frac{n}{3})
- \frac{2016}{5}\,\sigma_{3}(\frac{n}{4})      \\
- \frac{3024}{5}\,\sigma_{3}(\frac{n}{6})  
+ \frac{72192}{5}\,\sigma_{3}(\frac{n}{8}) 
- \frac{6624}{5}\,\sigma_{3}(\frac{n}{12}) 
- \frac{41472}{5}\,\sigma_{3}(\frac{n}{24})       
 - \frac{864}{5}\,\EuFrak{b}_{24,2}(n)      \\
 - 1296\,\EuFrak{b}_{24,3}(n)           
 - \frac{7488}{5}\,\EuFrak{b}_{24,4}(n)         
 - 15552\,\EuFrak{b}_{24,6}(n)          
 - 27648\,\EuFrak{b}_{24,8}(n)   
 \biggr)q^{n}. \label{convolSum-eqn-3_8}  
\end{multline}
\end{corollary}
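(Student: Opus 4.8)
The plan is to derive each of the nine identities \autoref{convolSum-eqn-1_10}--\autoref{convolSum-eqn-3_8} as a direct instance of \hyperref[convolution-lemma_a_b]{Lemma \ref*{convolution-lemma_a_b}}, in exactly the same manner as \autoref{lema-w_33_40_56}. Concretely, I would apply that lemma in turn to the coprime pairs $(\alpha,\beta)=(1,10),\,(2,5),\,(1,11),\,(1,12),\,(3,4),\,(1,15),\,(3,5),\,(1,24),\,(3,8)$, whose products run through $10,\,11,\,12,\,15,\,24$. For each such pair, \hyperref[evalConvolClass-lema-1]{Lemma \ref*{evalConvolClass-lema-1}} first puts $(\alpha\,L(q^{\alpha})-\beta\,L(q^{\beta}))^{2}$ into $\M_{4}(\Gamma_{0}(\alpha\beta))$.

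The next step is to fix an explicit basis $\EuScript{B}_{M}=\EuScript{B}_{E}\cup\EuScript{B}_{S}$ of $\M_{4}(\Gamma_{0}(\alpha\beta))$. For $\alpha\beta=15$ and $\alpha\beta=24$ I would take $\EuScript{B}_{E}=\{\,M(q^{t})\mid t\mid\alpha\beta\,\}$ together with the $\eta$-quotients $\EuFrak{B}_{15,j}(q)$, $1\le j\le 4$, resp.\ $\EuFrak{B}_{24,l}(q)$, $1\le l\le 8$, listed above; for $\alpha\beta=10$ and $\alpha\beta=12$ I would reuse, through the inclusions \autoref{eqn-5-40}, \autoref{eqn-8-40} and $\S_{4}(\Gamma_{0}(12))\subset\S_{4}(\Gamma_{0}(24))$, the quotients $\EuFrak{B}_{40,1},\,\EuFrak{B}_{40,2}=\EuFrak{B}_{40,1}(q^{2}),\,\EuFrak{B}_{40,3}$, resp.\ $\EuFrak{B}_{24,1},\,\EuFrak{B}_{24,2}=\EuFrak{B}_{24,1}(q^{2}),\,\EuFrak{B}_{24,3}$; and for $\alpha\beta=11$ I would use $M(q),M(q^{11})$ together with $\EuFrak{B}_{33,2}(q)=\eta^{4}(z)\eta^{4}(11z)$ and the cusp form built from the weight-two newform $\EuFrak{B}'_{33,1}(q)=\eta^{2}(z)\eta^{2}(11z)$ (cf.\ \autoref{eqn-11-33}). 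In each case I would check, using \hyperref[ligozat_theorem]{Theorem \ref*{ligozat_theorem}}, the dimension formula \autoref{dimension-Eisenstein} (so $\dim\E_{4}(\Gamma_{0}(\alpha\beta))=d(\alpha\beta)$), and the values $\dim\S_{4}(\Gamma_{0}(10))=3$, $\dim\S_{4}(\Gamma_{0}(11))=2$, $\dim\S_{4}(\Gamma_{0}(12))=3$, $\dim\S_{4}(\Gamma_{0}(15))=4$, $\dim\S_{4}(\Gamma_{0}(24))=8$, that these forms lie in the stated spaces and are linearly independent; independence I would establish by the Case~1 / Case~2 argument in the proof of \autoref{basisCusp_a_b}(b).

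With a basis fixed, \hyperref[convolution-lemma_a_b]{Lemma \ref*{convolution-lemma_a_b}} expresses $(\alpha\,L(q^{\alpha})-\beta\,L(q^{\beta}))^{2}=\sum_{\delta\mid\alpha\beta}X_{\delta}M(q^{\delta})+\sum_{j}Y_{j}\,\EuFrak{B}_{\alpha\beta,j}(q)$ with unknown $X_{\delta},Y_{j}\in\mathbb{C}$. Expanding each $M(q^{\delta})$ by \autoref{evalConvolClass-eqn-4} and each $\eta$-quotient by its product expansion, and equating the coefficient of $q^{n}$ with that of \autoref{evalConvolClass-eqn-11} from \autoref{convolutionSum_a_b} for enough values of $n$ — those in $D(\alpha\beta)$ with $1\le n\le m_{S}$, plus a few more where needed, exactly as in the proof of \autoref{lema-w_33_40_56} where $n$ ranges over $\{1,\dots,20,40,80\}$ at level $40$ — yields a square linear system whose unique solution is the list of $X_{\delta}$ and $Y_{j}$. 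Feeding these values into \autoref{convolution_a_b-eqn-0} reproduces the right-hand sides displayed in \autoref{convolSum-eqn-1_10}--\autoref{convolSum-eqn-3_8}; the coincidences $\EuFrak{B}_{15,3}(q)=\EuFrak{B}_{15,1}(q^{2})$, $\EuFrak{B}_{24,2}(q)=\EuFrak{B}_{24,1}(q^{2})$, $\EuFrak{B}_{24,4}(q)=\EuFrak{B}_{24,1}(q^{4})$, $\EuFrak{B}_{24,6}(q)=\EuFrak{B}_{24,3}(q^{2})$ and $\EuFrak{B}_{40,2}(q)=\EuFrak{B}_{40,1}(q^{2})$ recorded above explain why several cusp-form coefficients collapse in the final formulae.

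I expect the main obstacle to be not any single deep point but the certification of the bases at the larger levels: verifying that the eight $\eta$-quotients $\EuFrak{B}_{24,l}$ (and the four $\EuFrak{B}_{15,j}$) are linearly independent, and that the resulting $(d(\alpha\beta)+m_{S})\times(d(\alpha\beta)+m_{S})$ coefficient matrices are non-singular, may force a return to the substitution procedure of \autoref{basisCusp_a_b}(b), Case~2. A secondary delicate point is the level-$11$ case: the only $\eta$-quotient of level $11$ and weight $4$ satisfying \hyperref[ligozat_theorem]{Theorem \ref*{ligozat_theorem}} is $\eta^{4}(z)\eta^{4}(11z)$, which alone cannot span the two-dimensional space $\S_{4}(\Gamma_{0}(11))$, so one must check that adjoining the form obtained from $\eta^{2}(z)\eta^{2}(11z)$ completes a basis and that the attached coefficients $\EuFrak{b}'_{33,1}(n)$ are integral, so that \autoref{convolSum-eqn-1_11} has exactly the stated shape. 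All remaining work is the same routine — if voluminous — symbolic computation already carried out for $\alpha\beta=33,40,56$.
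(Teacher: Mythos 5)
Your proposal matches the paper's (implicit) proof exactly: the paper offers no separate argument for this corollary, and its intended justification is precisely what you describe, namely instantiating Lemma~\ref{convolution-lemma_a_b} at the pairs $(\alpha,\beta)=(1,10)$, $(2,5)$, $(1,11)$, $(1,12)$, $(3,4)$, $(1,15)$, $(3,5)$, $(1,24)$, $(3,8)$ with the bases listed in Section~\ref{re-evaluation} and solving the resulting linear systems, just as in the proof of Corollary~\ref{lema-w_33_40_56}. You even correctly flag the one delicate point the paper glosses over, the level-$11$ case where the weight-two form $\eta^{2}(z)\eta^{2}(11z)$ is adjoined because a single weight-four eta quotient cannot span the two-dimensional space $\S_{4}(\Gamma_{0}(11))$.
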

In the case of the evaluation of $W_{(1,1)}(n)$, we observe, using 
\hyperref[evalConvolClass-lema-1]{Lemma \ref*{evalConvolClass-lema-1}}, that 
for all $\alpha,\beta\in\mathbb{N}$ it hods that 
\begin{equation}  \label{evalConvolClass-lema-1-eqn-0}
0  =  ( \alpha\,L(q^{\alpha}) - \alpha\, L(q^{\alpha}))^{2}\in
\M_{4}(\Gamma_{0}(\alpha\beta)). 
\end{equation}
\begin{corollary} \label{cor-1-4_10}
Let $n$ be a positive integer. Then 
\begin{align}  \label{besge-glaisher-ramanujan-1}
 \forall \alpha\in\mathbb{N} \quad W_{(\alpha,\alpha)}(n)  = & 
 \frac{5}{12}\sigma_{3}(\frac{n}{\alpha}) + (\frac{1}{12}-\frac{1}{2\alpha}n)\sigma(\frac{n}{\alpha}),   
\end{align}
\begin{align}  \label{eqn-royer_cooper_ye-10}
 W_{(1,10)}(n)  = & 
 \frac{1}{312}\sigma_{3}(n) 
  + \frac{1}{78}\sigma_{3}(\frac{n}{2})
  + \frac{25}{312}\sigma_{3}(\frac{n}{5})  
  + \frac{25}{78}\sigma_{3}(\frac{n}{10})  
  + (\frac{1}{24}-\frac{1}{40}n)\sigma(n)     \notag \\  & 
  + (\frac{1}{24}-\frac{1}{4}n)\sigma(\frac{n}{10})   
  - \frac{31}{1560}\EuFrak{b}_{40,1}(n)   
  - \frac{5}{52}\EuFrak{b}_{40,1}(\frac{n}{2})
  + \frac{1}{12}\EuFrak{b}_{40,3}(n), 
\end{align}
\begin{align}  \label{eqn-royer_cooper_ye-2-5}
 W_{(2,5)}(n)  = & 
 \frac{1}{312}\sigma_{3}(n) 
  + \frac{1}{78}\sigma_{3}(\frac{n}{2})
  + \frac{25}{312}\sigma_{3}(\frac{n}{5})  
  + \frac{25}{78}\sigma_{3}(\frac{n}{10})   
  + (\frac{1}{24}-\frac{1}{20}n)\sigma(\frac{n}{2})     \notag \\  & 
  + (\frac{1}{24}-\frac{1}{8}n)\sigma(\frac{n}{5})   
  - \frac{1}{312}\EuFrak{b}_{40,1}(n)           
  + \frac{1}{260}\EuFrak{b}_{40,1}(\frac{n}{2})
  - \frac{1}{12}\EuFrak{b}_{40,3}(n), 
\end{align}
\begin{align}
W_{(1,11)}(n)   = & 
 \frac{5}{1464}\,\sigma_{3}(n)
 + \frac{605}{1464}\,\sigma_{3}(\frac{n}{11}) 
 + (\frac{1}{24}-\frac{1}{44}n)\sigma(n)  \notag \\ & 
 + (\frac{1}{24}-\frac{1}{4}n)\sigma(\frac{n}{11})  
 - \frac{14615}{386496}\, \EuFrak{b}'_{33,1}(n)  
 - \frac{90493}{386496}\,\EuFrak{b}_{33,2}(n), 
\label{convolSum-theor-w_1_11}  
\end{align} 
\begin{align}  \label{eqn-aalaca_et_al-1_12}
 W_{(1,12)}(n)  = & 
   \frac{1}{480}\,\sigma_{3}(n) 
  + \frac{1}{160}\,\sigma_{3}(\frac{n}{2}) 
  + \frac{3}{160}\,\sigma_{3}(\frac{n}{3}) 
  + \frac{1}{30}\,\sigma_{3}(\frac{n}{4}) 
  + \frac{9}{160}\,\sigma_{3}(\frac{n}{6})     \notag \\  & 
  + \frac{3}{10}\,\sigma_{3}(\frac{n}{12}) 
  + (\frac{1}{24}-\frac{1}{48}n)\sigma(n) 
  + (\frac{1}{24}-\frac{1}{4}n)\sigma(\frac{n}{12})   
   - \frac{11}{480}\,\EuFrak{b}_{24,1}(n)      \notag \\  & 
   - \frac{17}{240}\,\EuFrak{b}_{24,2}(n)
   - \frac{1}{16}\,\EuFrak{b}_{24,3}(n),
\end{align}
\begin{align}  \label{eqn-aalaca_et_al-3_4}
 W_{(3,4)}(n)  = &  
   \frac{1}{480}\,\sigma_{3}(n) 
  + \frac{1}{160}\,\sigma_{3}(\frac{n}{2})
  + \frac{3}{160}\,\sigma_{3}(\frac{n}{3})
  + \frac{1}{30}\,\sigma_{3}(\frac{n}{4})
  + \frac{9}{160}\,\sigma_{3}(\frac{n}{6})      \notag \\  & 
  + \frac{3}{10}\,\sigma_{3}(\frac{n}{12})
  + (\frac{1}{24}-\frac{1}{16}n)\sigma(\frac{n}{3}) 
  + (\frac{1}{24}-\frac{1}{12}n)\sigma(\frac{n}{4})   
  - \frac{1}{480}\,\EuFrak{b}_{24,1}(n)      \notag \\  & 
  - \frac{7}{240}\,\EuFrak{b}_{24,2}(n)
  + \frac{1}{16}\,\EuFrak{b}_{24,3}(n),
\end{align}
\begin{align}  \label{eqn-royer_cooper_ye-1-15}
 W_{(1,15)}(n)  = & 
 \frac{1}{1560}\sigma_{3}(n)
 + \frac{1}{65}\sigma_{3}(\frac{n}{3})
 +  \frac{25}{39}\sigma_{3}(\frac{n}{5})
 - \frac{25}{104}\sigma_{3}(\frac{n}{15})  
  + (\frac{1}{24}-\frac{1}{60}n)\sigma(n)       \notag \\  & 
  + (\frac{1}{24}-\frac{1}{4}n)\sigma(\frac{n}{15})   
  - \frac{1}{39}\,\EuFrak{b}_{15,1}(n)           
  - \frac{2}{15}\,\EuFrak{b}_{15,2}(n)
 - \frac{14}{65}\,\EuFrak{b}_{15,3}(n)
 + \frac{1}{5}\,\EuFrak{b}_{15,4}(n),  
\end{align}
\begin{align}  \label{eqn-royer_cooper_ye-3-5}
 W_{(3,5)}(n)  = & 
  \frac{1}{390}\sigma_{3}(n)
 + \frac{7}{520}\sigma_{3}(\frac{n}{3})
 - \frac{175}{312}\sigma_{3}(\frac{n}{5})
 + \frac{25}{26}\sigma_{3}(\frac{n}{15})    
  + (\frac{1}{24}-\frac{1}{20}n)\sigma(\frac{n}{3})   \notag \\  & 
  + (\frac{1}{24}-\frac{1}{8}n)\sigma(\frac{n}{5})   
  - \frac{1}{390}\,\EuFrak{b}_{15,1}(n)           
  - \frac{1}{30}\,\EuFrak{b}_{15,2}(n)
  - \frac{1}{26}\,\EuFrak{b}_{15,3}(n)
 - \frac{1}{5}\,\EuFrak{b}_{15,4}(n), 
\end{align}
\begin{align}  \label{eqn-aalaca_et_al-24}
 W_{(1,24)}(n)  = & 
  - \frac{1}{64}\,\sigma_{3}(n) 
  - \frac{77}{320}\,\sigma_{3}(\frac{n}{2}) 
  + \frac{1}{48}\,\sigma_{3}(\frac{n}{3}) 
  + \frac{127}{480}\,\sigma_{3}(\frac{n}{4}) 
  + \frac{41}{160}\,\sigma_{3}(\frac{n}{6})    \notag \\  & 
  + \frac{1}{30}\,\sigma_{3}(\frac{n}{8}) 
  - \frac{97}{480}\,\sigma_{3}(\frac{n}{12}) 
  + \frac{3}{10}\,\sigma_{3}(\frac{n}{24})   
  + (\frac{1}{24}-\frac{1}{96}n)\sigma(n)       \notag \\  & 
  + (\frac{1}{24}-\frac{1}{4}n)\sigma(\frac{n}{24})   
  - \frac{1}{64}\,\EuFrak{b}_{24,1}(n)
  + \frac{51}{160}\,\EuFrak{b}_{24,2}(n)
  + \frac{5}{16}\,\EuFrak{b}_{24,3}(n)       \notag \\  & 
  + \frac{883}{240}\,\EuFrak{b}_{24,4}(n)   
  + 2\,\EuFrak{b}_{24,5}(n)
  + \frac{183}{16}\,\EuFrak{b}_{24,6}(n)   
  + 10\,\EuFrak{b}_{24,7}(n)      
  + 31\,\EuFrak{b}_{24,8}(n), 
\end{align}
\begin{align}  \label{eqn-aalaca_et_al-3_8}
 W_{(3,8)}(n)  = & 
  - \frac{1}{160}\,\sigma_{3}(\frac{n}{2}) 
  + \frac{1}{192}\,\sigma_{3}(\frac{n}{3}) 
  + \frac{7}{480}\,\sigma_{3}(\frac{n}{4})
  + \frac{7}{320}\,\sigma_{3}(\frac{n}{6})       \notag \\  & 
  + \frac{1}{30}\,\sigma_{3}(\frac{n}{8})
  + \frac{23}{480}\,\sigma_{3}(\frac{n}{12})
  + \frac{3}{10}\,\sigma_{3}(\frac{n}{24})      
  + (\frac{1}{24}-\frac{1}{32}n)\sigma(\frac{n}{3})      \notag \\  & 
  + (\frac{1}{24}-\frac{1}{12}n)\sigma(\frac{n}{8})   
  + \frac{1}{160}\,\EuFrak{b}_{24,2}(n)
  + \frac{3}{64}\,\EuFrak{b}_{24,3}(n)
  + \frac{13}{240}\,\EuFrak{b}_{24,4}(n)     \notag \\  & 
  + \frac{9}{16}\,\EuFrak{b}_{24,6}(n)
  + \EuFrak{b}_{24,8}(n).  
\end{align}
\end{corollary}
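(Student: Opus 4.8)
The identity \eqref{besge-glaisher-ramanujan-1} for $W_{(\alpha,\alpha)}(n)$ is not new: it is exactly the Besge--Glaisher--Ramanujan evaluation recalled in \eqref{convolutionSum-a-a-eqn}, so nothing needs to be proved there. (Equivalently, one may feed the trivial membership \eqref{evalConvolClass-lema-1-eqn-0} into \autoref{convolutionSum_a_b} and solve for $W_{(\alpha,\alpha)}(n)$.) For the nine remaining identities the plan is to derive each one as an immediate corollary of \autoref{convolution_a_b}, exactly as the formulae of \autoref{convolSum-theor-w_33_40_56} were obtained from the $L$-function expansions of \autoref{lema-w_33_40_56}.

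Concretely, fix a coprime pair $(\alpha,\beta)$ among $(1,10),(2,5),(1,11),(1,12),(3,4),(1,15),(3,5),(1,24),(3,8)$. By \autoref{evalConvolClass-lema-1}, $(\alpha L(q^{\alpha})-\beta L(q^{\beta}))^{2}\in\M_{4}(\Gamma_{0}(\alpha\beta))$, and \autoref{lema-w_1-4_10} already records its expansion in the Eisenstein basis $\{M(q^{t}):t\mid\alpha\beta\}$ together with the explicit $\eta$-quotient basis of $\S_{4}(\Gamma_{0}(\alpha\beta))$ displayed in \autoref{re-evaluation} (for $\alpha\beta=10$ one inherits basis elements from level $40$ and uses $\EuFrak{b}_{40,2}(n)=\EuFrak{b}_{40,1}(n/2)$; for $\alpha\beta=11$ the weight-two cusp form $\eta^{2}(z)\eta^{2}(11z)$ is brought in; for $\alpha\beta=12,15,24$ the quoted $\eta$-quotients and the relations $\EuFrak{b}_{15,3}(n)=\EuFrak{b}_{15,1}(n/2)$, $\EuFrak{b}_{24,2}(n)=\EuFrak{b}_{24,1}(n/2)$, $\EuFrak{b}_{24,4}(n)=\EuFrak{b}_{24,1}(n/4)$, $\EuFrak{b}_{24,6}(n)=\EuFrak{b}_{24,3}(n/2)$ are used). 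Following verbatim the proof of \autoref{convolution_a_b}, I equate the coefficient of $q^{n}$ in that expansion with the coefficient of $q^{n}$ in \eqref{evalConvolClass-eqn-11}, obtaining
\[
1152\,\alpha\beta\,W_{(\alpha,\beta)}(n)=-240\sum_{\delta\mid\alpha\beta}\sigma_{3}(\tfrac{n}{\delta})X_{\delta}-\sum_{j=1}^{m_{S}}\EuFrak{b}_{\alpha\beta,j}(n)Y_{j}+240\alpha^{2}\sigma_{3}(\tfrac{n}{\alpha})+240\beta^{2}\sigma_{3}(\tfrac{n}{\beta})+48\alpha(\beta-6n)\sigma(\tfrac{n}{\alpha})+48\beta(\alpha-6n)\sigma(\tfrac{n}{\beta}),
\]
where the $X_{\delta},Y_{j}$ are the (already computed) coordinates read off from \autoref{lema-w_1-4_10}. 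Solving for $W_{(\alpha,\beta)}(n)$ and simplifying the rational coefficients yields precisely the asserted formula.

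The only step that is not purely mechanical is checking that the $\eta$-quotients listed in \autoref{re-evaluation} genuinely form bases of the corresponding spaces $\S_{4}(\Gamma_{0}(\alpha\beta))$: one verifies via \autoref{ligozat_theorem} that each listed quotient lies in the right cusp space, compares the count with the dimension formula, and then runs the linear-independence argument of \autoref{basisCusp_a_b}\,(b) (Case~1 or Case~2) --- i.e., one shows the matrix of low-order Fourier coefficients is nonsingular, replacing a basis candidate if necessary. I expect this, together with making sure enough Fourier coefficients are taken so that the linear system for the $X_{\delta},Y_{j}$ is determined and consistent (the main bookkeeping hazard, especially at level $24$ and for the coefficients inherited from smaller levels), to be the bulk of the work; once the bases are fixed, the remainder is the same coefficient comparison and rational arithmetic already carried out in \autoref{convolSum-theor-w_33_40_56}.
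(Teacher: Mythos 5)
Your proposal is correct and follows essentially the same route as the paper: the paper likewise obtains \autoref{besge-glaisher-ramanujan-1} by feeding the trivial identity \autoref{evalConvolClass-lema-1-eqn-0} into \autoref{convolutionSum_a_b}, and derives the remaining nine formulae as immediate consequences of \autoref{convolution_a_b} applied to the basis expansions recorded in \autoref{lema-w_1-4_10}, with the $\eta$-quotient bases and the coefficient relations (such as $\EuFrak{b}_{40,2}(n)=\EuFrak{b}_{40,1}(\frac{n}{2})$) set up in \autoref{re-evaluation}. Your additional remarks about verifying the cusp-form bases and the consistency of the linear system match the preparatory work the paper delegates to \autoref{basisCusp_a_b} and \autoref{ligozat_theorem}.
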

For example \autoref{besge-glaisher-ramanujan-1} is easily proved as follows. 
Due to \autoref{evalConvolClass-lema-1-eqn-0} and applying 
\autoref{evalConvolClass-eqn-11} we have 
 \begin{equation*} 
 0 = 
 - 1152\,\alpha^{2}\,W_{(\alpha,\alpha)}(n) 
 + 480\,\alpha^{2}\,\sigma_{3}(\frac{n}{\alpha}) 
    + 96\,\alpha\,(\alpha-6\,n)\,\sigma(\frac{n}{\alpha}).   
\end{equation*}
Therefore, we obtain \autoref{convolutionSum-a-a-eqn}. By setting $\alpha=1$, one 
gets the result obtained by \mB\ \cite{besge}, \jwlG\ \cite{glaisher} and \sR\ 
\cite{ramanujan}. 


\section{Formulae for the Number of Representations of a positive Integer}  
\label{representations}

We make use of the convolution sums evaluated in 
\hyperref[convolution_33_40_56]{Section \ref*{convolution_33_40_56}}
among others to determine 
explicit formulae for the number of representations of a positive
integer $n$  by the octonary quadratic forms \autoref{introduction-eq-1} 
and \autoref{introduction-eq-2}, respectively.

\subsection{Number of Representations of a positive Integer applying illustrated 
Convolution Sums} 
\label{representations_33_40_56}

\subsubsection{Representations by the Octonary Quadratic Forms \autoref{introduction-eq-1}}

We determine formulae for the number of representations of a positive integer $n$ 
by the Octonary Quadratic Form \autoref{introduction-eq-2} when we mainly apply the 
evaluation of the convolution sums $W_{(1,33)}(n)$ and $W_{(3,11)}(n)$. In order to  
do that, we recall that $33=3\cdot 11$ is of the restricted form in 
\hyperref[representations_c_d]{Section \ref*{representations_c_d}}. Hence, 
from \hyperref[representation-prop-2]{Proposition \ref*{representation-prop-2}} we 
derive that $\Omega_{3}=\{(1,11)\}$. 
We then deduce the following result:
\begin{corollary} \label{representations-coro_33}
Let $n\in\mathbb{N}$. Then  
\begin{align*}
R_{(1,11)}(n)  = & 
12\sigma(n) - 36\sigma(\frac{n}{3}) + 12\sigma(\frac{n}{11}) -
36\sigma(\frac{n}{33}) + 144\, W_{(1,11)}(n) \\ &
+ 1296\, W_{(1,11)}(\frac{n}{3}) 
 - 432\, \biggl( W_{(3,11)}(n) + W_{(1,33)}(n) \biggr).
\end{align*}
\end{corollary}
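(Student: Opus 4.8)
The plan is to derive the asserted identity as the specialization of \autoref{representations-theor-c_d} to the pair $(c,d)=(1,11)$. First I would record that $\alpha\beta=33=3\cdot 11$ has the restricted shape required in \autoref{representations_c_d}: it is $2^{\nu}\mho$ with $\nu=0$ and $\mho=3\cdot 11$, and $\mho\equiv 0\pmod 3$; in particular $\gcd(\alpha,\beta)=1$ and \autoref{class1-conditions} hold, so the construction of \autoref{determine_c_d} applies, and the evaluations of $W_{(1,33)}(n)$ and $W_{(3,11)}(n)$ obtained in \autoref{convolSum-theor-w_33_40_56} (together with $W_{(1,11)}(n)$ from \autoref{cor-1-4_10}) are available.

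Next I would compute $\Omega_{3}$ explicitly. Here $\Delta=\frac{\alpha\beta}{3}=11$ and $P_{3}=\{\,2^{0}\,\}\cup\{\,11\,\}=\{1,11\}$, so the only pair $(\mu(Q_{1}),\mu(Q_{2}))$ with $\gcd(\mu(Q_{1}),\mu(Q_{2}))=1$ and $\mu(Q_{1})\mu(Q_{2})=\Delta$ is $(1,11)$; by \autoref{representation-prop-2} this exhausts $\Omega_{3}$, i.e.\ $\Omega_{3}=\{(1,11)\}$. Hence $(c,d)=(1,11)$ is an admissible pair and \autoref{representations-theor-c_d} may be invoked for it.

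Finally I would substitute $c=1$, $d=11$ (so that $3c=3$ and $3d=33$) into the formula of \autoref{representations-theor-c_d}: the terms $W_{(3c,d)}(n)$ and $W_{(c,3d)}(n)$ become $W_{(3,11)}(n)$ and $W_{(1,33)}(n)$, while $W_{(c,d)}(n)=W_{(1,11)}(n)$ and $W_{(c,d)}(\frac{n}{3})=W_{(1,11)}(\frac{n}{3})$, and the four divisor-sum terms become $12\sigma(n)-36\sigma(\frac{n}{3})+12\sigma(\frac{n}{11})-36\sigma(\frac{n}{33})$. This is exactly the displayed formula, and since every convolution sum on the right-hand side has already been evaluated in closed form, the result is fully explicit. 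There is no substantive obstacle here: the statement is a direct instance of the general theorem, the only points needing (minimal) care being the verification that $\Omega_{3}=\{(1,11)\}$ so that the hypothesis $(c,d)\in\Omega_{3}$ of \autoref{representations-theor-c_d} is met, and the bookkeeping in the substitution $3c=3$, $3d=33$.
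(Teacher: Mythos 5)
Your proposal is correct and follows essentially the same route as the paper: verify via Proposition \ref{representation-prop-2} that $\Omega_{3}=\{(1,11)\}$ and then specialize the general representation formula to $(c,d)=(1,11)$. Note that the paper's own proof cites \autoref{representations-theor_a_b} by what appears to be a typo; you correctly identify \autoref{representations-theor-c_d} as the theorem actually being instantiated, since the stated formula carries the coefficients $12$, $36$, $144$, $1296$, $432$ coming from $s_{4}$.
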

\begin{proof} 
It follows immediately from \autoref{representations-theor_a_b} 
with $(c,d)=(1,11)$.  One can then make 
use of \autoref{convolSum-theor-w_1_11}, 
\autoref{convolSum-theor-w_1_33} and \autoref{convolSum-theor-w_3_11}
 to simplify this formula. 
\end{proof}

\subsubsection{Representations by Octonary Quadratic Forms \autoref{introduction-eq-2}}

We give formulae for the number of representations of a positive integer $n$ 
by the Octonary Quadratic Form \autoref{introduction-eq-1} by mainly applying the 
evaluation of the convolution sums $W_{(1,40)}(n)$, $W_{(1,56)}(n)$, $W_{(5,8)}(n)$ 
and $W_{(7,8)}(n)$. To achieve  
that, we recall that $40=2^{3}\cdot 5$ and $56=2^{3}\cdot 7$ are of the restricted 
form in 
\hyperref[representations_a_b]{Section \ref*{representations_a_b}}. Therefore, 
we apply \hyperref[representation-prop-1]{Proposition \ref*{representation-prop-1}} 
to conclude that $\Omega_{4}=\{(1,10), (2,5)\}$ in case $\alpha\beta=40$ and 
$\Omega_{4}=\{(1,14), (2,7)\}$ in case $\alpha\beta=56$. 

\begin{corollary} \label{representations-coro-40_56}
Let $n\in\mathbb{N}$. Then  
\begin{align*}
N_{(1,10)}(n)  &  = 
8\sigma(n) - 32\sigma(\frac{n}{4}) + 8\sigma(\frac{n}{10}) -
32\sigma(\frac{n}{40}) + 64\, W_{(1,10)}(n) 
   + 1024\, W_{(1,10)}(\frac{n}{4}) \\ &
   - 256\, \biggl( W_{(2,5)}(\frac{n}{2}) +  W_{(1,40)}(n) \biggr), 
\end{align*}
\begin{align*}
N_{(2,5)}(n) & = ~ 
8\sigma(\frac{n}{2}) - 32\sigma(\frac{n}{8}) + 8\sigma(\frac{n}{5}) -
32\sigma(\frac{n}{20}) + 64\, W_{(2,5)}(n) 
   + 1024\, W_{(2,5)}(\frac{n}{4}) \\ & 
   - 256\, \biggl( W_{(5,8)}(n) +  W_{(1,10)}(\frac{n}{2}) \biggr), 
\end{align*}
\begin{align*}
N_{(1,14)}(n)  & = ~ 
8\sigma(n) - 32\sigma(\frac{n}{4}) + 8\sigma(\frac{n}{14}) -
32\sigma(\frac{n}{56}) 
 + 64\, W_{(1,14)}(n) + 1024\, W_{(1,14)}(\frac{n}{4}) \\ &
 - 256\, \biggl( W_{(2,7)}(\frac{n}{2}) + W_{(1,56)}(n) \biggr),  
\end{align*}
\begin{align*}
N_{(2,7)}(n)  & = ~  
8\sigma(\frac{n}{2}) - 32\sigma(\frac{n}{8}) 
+ 8\sigma(\frac{n}{7}) - 32\sigma(\frac{n}{28}) 
 + 64\, W_{(2,7)}(n) + 1024\, W_{(2,7)}(\frac{n}{4})   \\ & 
 - 256\, \biggl( W_{(7,8)}(n) + W_{(1,14)}(\frac{n}{2}) \biggr).
\end{align*}
\end{corollary}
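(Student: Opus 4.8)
The plan is to obtain all four identities as immediate specializations of \autoref{representations-theor_a_b}, followed by the elementary index reduction \autoref{convolutionSumsEqns-gcd}, which rewrites any convolution sum $W_{(\alpha,\beta)}(n)$ with $\gcd(\alpha,\beta)=\delta>1$ as $W_{(\alpha/\delta,\beta/\delta)}(n/\delta)$. First I would recall, as already noted in the paragraph preceding the statement, that $40=2^{3}\cdot 5$ and $56=2^{3}\cdot 7$ both satisfy $\alpha\beta\equiv 0\pmod{4}$ and lie in the class \autoref{class1-conditions}, and that \autoref{representation-prop-1} yields $\Omega_{4}=\{(1,10),(2,5)\}$ when $\alpha\beta=40$ and $\Omega_{4}=\{(1,14),(2,7)\}$ when $\alpha\beta=56$. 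Hence each pair occurring in the statement is a legitimate choice of $(a,b)\in\Omega_{4}$, so \autoref{representations-theor_a_b} applies to each of them verbatim.

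Next, for each pair $(a,b)$ I would substitute into the formula of \autoref{representations-theor_a_b}. This produces the divisor terms $8\sigma(\tfrac{n}{a})-32\sigma(\tfrac{n}{4a})+8\sigma(\tfrac{n}{b})-32\sigma(\tfrac{n}{4b})$, the terms $64\,W_{(a,b)}(n)+1024\,W_{(a,b)}(\tfrac{n}{4})$, and the term $-256\bigl(W_{(4a,b)}(n)+W_{(a,4b)}(n)\bigr)$. The only cleanup needed is in that last pair of convolution sums, where the index is not in lowest terms. For $(a,b)=(1,10)$ we have $\gcd(4,10)=2$, so $W_{(4,10)}(n)=W_{(2,5)}(\tfrac{n}{2})$ by \autoref{convolutionSumsEqns-gcd}, while $W_{(1,40)}(n)$ is already reduced; for $(a,b)=(2,5)$ we use $W_{(8,5)}(n)=W_{(5,8)}(n)$ (reorder the two indices, using the symmetry of \autoref{def-convolution_sum} in $l$ and $m$) together with $\gcd(2,20)=2$, giving $W_{(2,20)}(n)=W_{(1,10)}(\tfrac{n}{2})$. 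The pairs $(1,14)$ and $(2,7)$ are treated identically, invoking $W_{(4,14)}(n)=W_{(2,7)}(\tfrac{n}{2})$, $W_{(8,7)}(n)=W_{(7,8)}(n)$, and $W_{(2,28)}(n)=W_{(1,14)}(\tfrac{n}{2})$. Collecting terms reproduces exactly the four displayed formulae.

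There is no genuine obstacle here: all the substance is already contained in \autoref{representations-theor_a_b} (which in turn rests on Jacobi's four-square identity \autoref{representations-eqn-4-1} and the definition of $W_{(\alpha,\beta)}$), and the rest is the bookkeeping of \autoref{convolutionSumsEqns-gcd} and the index symmetry. If one wished to make the right-hand sides fully explicit in terms of divisor functions and eta-quotient coefficients, one would afterwards insert the evaluations \autoref{convolSum-theor-w_1_40}, \autoref{convolSum-theor-w_5_8}, \autoref{convolSum-theor-w_1_56}, \autoref{convolSum-theor-w_7_8}, together with \autoref{eqn-royer_cooper_ye-10}, \autoref{eqn-royer_cooper_ye-2-5} and the corresponding formulae for $W_{(1,14)}$ and $W_{(2,7)}$ from the literature; but that is a mechanical expansion, not a new step, and the corollary as stated stops short of it.
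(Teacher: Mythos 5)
Your proposal is correct and follows essentially the same route as the paper: specialize \autoref{representations-theor_a_b} to $(a,b)=(1,10)$, $(2,5)$, $(1,14)$, $(2,7)$ and then reduce the non-coprime sums $W_{(4a,b)}$, $W_{(a,4b)}$ via \autoref{convolutionSumsEqns-gcd} and the symmetry of \autoref{def-convolution_sum}. The paper compresses these reduction steps into the word ``immediately,'' so your version merely makes explicit the bookkeeping the paper leaves implicit; your remark that inserting the explicit evaluations is an optional simplification also matches the paper's proof.
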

\begin{proof} 
These formulae follow immediately from \autoref{representations-theor_a_b} 
when we set $(a,b)=(1,10)$, $(2,5)$, $(1,14)$, $(2,7)$, respectively. 
One can then use the result of 
\begin{itemize}
\item  \sC\ and \dY\ \cite[Thrm 2.1]{cooper_ye2014},   
\autoref{eqn-royer_cooper_ye-10}, 
\autoref{convolSum-theor-w_1_40} and \autoref{convolSum-theor-w_5_8}  
for the sake of simplification in case of $N_{10}$ and $N_{(2,5)}$.  
\item \eR\ \cite[Thrms 1.7]{royer}, \eN\ \cite[Thrm 3.2.1]{ntienjem2015},  
\autoref{convolSum-theor-w_1_56} and \autoref{convolSum-theor-w_7_8}
 to simplify the formulae in case of $N_{(1,14)}$ and $N_{(2,7)}$.
\end{itemize}
\end{proof}


\subsection{Revisited Formulae for the Number of representations of a positive integer}
\label{residue-representation}

In the following subsection, formulae for the number of representations of a positive 
integer $n$, $N_{(a,b)}(n)$, for $(a,b)=(1,1),(1,3),(2,3),(1,9)$, are determined as  
applications of the evaluation of the convolution sums 
$W_{(1,4)}(n)$ by \jgH\ et al.\ \cite{huardetal}, 
$W_{(1,12)}(n)$, $W_{(3,4)}(n)$, $W_{(1,24)}(n)$ and $W_{(3,8)}(n)$ by 
\aA\ et al.\ \cite{alaca_alaca_williams2006,alaca_alaca_williams2007a}, and
$W_{(1,36)}(n)$ and $W_{(4,9)}(n)$ by \dY\ \cite{ye2015}. These numbers of 
representations of a positive integer $n$ are discovered due to 
\hyperref[representation-prop-1]{Proposition \ref*{representation-prop-1}}. We 
rather consider \autoref{eqn-aalaca_et_al-1_12}, \autoref{eqn-aalaca_et_al-3_4}, 
\autoref{eqn-aalaca_et_al-24} and \autoref{eqn-aalaca_et_al-3_8} in the following 
result. 
\begin{corollary} \label{representations-coro-residue}
Let $n\in\mathbb{N}$. Then  
\begin{align*}
N_{(1,1)}(n) & = 16\,\sigma(n) - 64\,\sigma(\frac{n}{4})  
 + 64\, W_{(1,1)}(n) + 1024\, W_{(1,1)}(\frac{n}{4}) - 512\,W_{(1,4)}(n) \\ & 
   = 16\,\sigma_{3}(n) - 32\,\sigma_{3}(\frac{n}{2}) + 256\,\sigma_{3}(\frac{n}{4}),
\end{align*}
\begin{align*}
N_{(1,3)}(n) & = 8\sigma(n) - 32\sigma(\frac{n}{4}) + 8\sigma(\frac{n}{3}) -
32\sigma(\frac{n}{12}) 
 + 64\, W_{(1,3)}(n) + 1024\, W_{(1,3)}(\frac{n}{4}) \\ &
 - 256\, \biggl( W_{(3,4)}(n) + W_{(1,12)}(n) \biggr), 
\end{align*}
\begin{align*}
N_{(2,3)}(n)  & = 8\,\sigma(\frac{n}{2}) - 32\,\sigma(\frac{n}{8}) 
+ 8\,\sigma(\frac{n}{3}) - 32\,\sigma(\frac{n}{12}) 
 + 64\, W_{(1,3)}(n) + 1024\, W_{(1,3)}(\frac{n}{4}) \\ &
 - 256\, \biggl( W_{(3,8)}(n) + W_{(1,12)}(n) \biggr), 
\end{align*}
\begin{align*}
N_{(1,9)}(n)  & = 8\,\sigma(n) - 32\,\sigma(\frac{n}{4}) 
+ 8\,\sigma(\frac{n}{9}) - 32\,\sigma(\frac{n}{36}) 
 + 64\,W_{(1,9)}(n) + 1024\,W_{(1,9)}(\frac{n}{4}) \\ & 
 - 256\,\biggl( W_{(4,9)}(n) + W_{(1,36)}(n) \biggr), 
\end{align*}
\end{corollary}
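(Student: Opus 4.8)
The plan is to obtain all four identities as immediate specializations of \autoref{representations-theor_a_b}. Although that theorem is phrased for $(a,b)\in\Omega_{4}$, its proof uses nothing beyond Jacobi's four squares identity \autoref{representations-eqn-4-1} and the rescaling argument built from the dilation $\tau$; the arithmetic of the class \autoref{class1-conditions} never enters. Consequently the formula
\begin{multline*}
N_{(a,b)}(n)=8\sigma(\tfrac{n}{a})-32\sigma(\tfrac{n}{4a})+8\sigma(\tfrac{n}{b})-32\sigma(\tfrac{n}{4b}) \\
+64\,W_{(a,b)}(n)+1024\,W_{(a,b)}(\tfrac{n}{4})-256\bigl(W_{(4a,b)}(n)+W_{(a,4b)}(n)\bigr)
\end{multline*}
holds for every $(a,b)\in\mathbb{N}^{2}$, and in particular for the pairs $(1,1)$, $(1,3)$, $(2,3)$, $(1,9)$ singled out via \autoref{representation-prop-1} (for $\alpha\beta=4,12,24,36$ respectively). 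For each of these pairs I would substitute into the displayed formula and then tidy the arguments of the occurring convolution sums using the two elementary facts already in the paper: the symmetry $W_{(\alpha,\beta)}=W_{(\beta,\alpha)}$, and the reduction \autoref{convolutionSumsEqns-gcd}, $W_{(\alpha,\beta)}(n)=W_{(\alpha/d,\beta/d)}(\tfrac{n}{d})$ with $d=\gcd(\alpha,\beta)$. When $a=b$ (the case $(1,1)$) the two quaternary blocks coincide and merge to $16\sigma(n)-64\sigma(\tfrac{n}{4})$, while $W_{(4,1)}=W_{(1,4)}$ makes the last two convolution sums coalesce to $-512\,W_{(1,4)}(n)$; this produces the first line of the $N_{(1,1)}$ identity, and the same substitution produces the stated formulae for $N_{(1,3)}$, $N_{(2,3)}$ and $N_{(1,9)}$.

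To pass from the first line of the $N_{(1,1)}$ identity to the closed form on the second line, I would then insert the available evaluations of the three convolution sums involved: the Besge--Glaisher--Ramanujan value \autoref{convolutionSum-a-a-eqn} (equivalently \autoref{besge-glaisher-ramanujan-1}) for $W_{(1,1)}(n)$ and $W_{(1,1)}(\tfrac{n}{4})$, and the evaluation of $W_{(1,4)}(n)$ due to \jgH\ et al.\ \cite{huardetal}. Because there are no weight~$4$ cusp forms of level~$4$, the latter is a pure Eisenstein combination of $\sigma_{3}(n)$, $\sigma_{3}(\tfrac{n}{2})$ and $\sigma_{3}(\tfrac{n}{4})$ together with the customary elementary terms involving $n\,\sigma(\cdot)$. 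Upon collecting coefficients the elementary terms coming from the three convolution sums cancel identically --- as they must, since $N_{(1,1)}(n)$ is the number of representations of $n$ as a sum of eight squares --- and what survives is $16\sigma_{3}(n)-32\sigma_{3}(\tfrac{n}{2})+256\sigma_{3}(\tfrac{n}{4})$. For $N_{(1,3)}$, $N_{(2,3)}$ and $N_{(1,9)}$ one may, if one wishes, make the right hand sides completely explicit by substituting the evaluations of $W_{(1,3)}$ and $W_{(1,9)}$ by \ksW, of $W_{(1,12)}$, $W_{(3,4)}$, $W_{(1,24)}$ and $W_{(3,8)}$ recorded in \autoref{eqn-aalaca_et_al-1_12}, \autoref{eqn-aalaca_et_al-3_4}, \autoref{eqn-aalaca_et_al-24} and \autoref{eqn-aalaca_et_al-3_8}, and of $W_{(1,36)}$ and $W_{(4,9)}$ by \dY\ \cite{ye2015}, together with \autoref{besge-glaisher-ramanujan-1} for any rescaled self convolution; but the corollary records only the compact form in terms of convolution sums, so this last step is optional.

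The argument is almost entirely bookkeeping: no modular forms input is needed beyond \autoref{representations-theor_a_b} and the previously established evaluations. The one place where a genuine calculation must be carried out is the $\sigma_{3}$ collapse in the $N_{(1,1)}$ case, and the only point demanding care elsewhere is to track the index changes correctly, for instance $W_{(4a,b)}\mapsto W_{(b,4a)}$ and, in the $(2,3)$ case, $W_{(2,12)}(n)=W_{(1,6)}(\tfrac{n}{2})$ through \autoref{convolutionSumsEqns-gcd}. I do not anticipate any real obstacle; the main risk is an arithmetic slip while reconciling the various rational coefficients.
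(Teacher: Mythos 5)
Your approach is exactly the paper's: the corollary is proved there by the single sentence that the formulae ``follow immediately from \autoref{representations-theor_a_b}'' upon setting $(a,b)=(1,1),(1,3),(2,3),(1,9)$, and your observation that the proof of that theorem uses only Jacobi's identity \autoref{representations-eqn-4-1} and the dilation argument (so that membership in $\Omega_{4}$ is irrelevant) is correct, as is your derivation of the closed form $16\,\sigma_{3}(n)-32\,\sigma_{3}(\frac{n}{2})+256\,\sigma_{3}(\frac{n}{4})$ for $N_{(1,1)}(n)$. One caveat: for $(a,b)=(2,3)$ the theorem actually yields $64\,W_{(2,3)}(n)+1024\,W_{(2,3)}(\frac{n}{4})-256\bigl(W_{(3,8)}(n)+W_{(1,6)}(\frac{n}{2})\bigr)$ --- your own reduction $W_{(2,12)}(n)=W_{(1,6)}(\frac{n}{2})$ is the right one --- so the substitution does \emph{not} literally produce the printed $N_{(2,3)}$ identity, whose terms $W_{(1,3)}(n)$, $W_{(1,3)}(\frac{n}{4})$ and $W_{(1,12)}(n)$ appear to be carried over from the $(1,3)$ case by a transcription slip; you should state the corrected formula rather than claim agreement with the printed one.
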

\begin{proof} 
When we set $(a,b)=(1,1)$, $(1,3)$, $(2,3)$, $(1,9)$, these formulae follow 
immediately from \autoref{representations-theor_a_b}. 
\end{proof}


\section{Concluding Remark} \label{conclusion}
To evaluate the convolution sum for $\alpha\beta$ 
that belongs to this class of positive integers, it now suffices to 
determine a basis of the space of cusp forms of weight $4$ for
$\Gamma_{0}(\alpha\beta)$. It is straightforward from \autoref{basisCusp_a_b} (a), 
that a basis of the space of Eisenstein forms of weight $4$ for 
$\Gamma_{0}(\alpha\beta)$ is already given. 

The determination of a basis of the space of cusp forms when 
$\alpha\beta$ is large is tedious. A future work is to carry out an effective 
and efficient method to build a basis of a space of cusp forms of weight 
$4$ for $\Gamma_{0}(\alpha\beta)$ when $\alpha\beta$ is large.  

A natural number can be expressed as a finite product of distinct primes to 
the power of some positive integers. The 
form for $\alpha\beta$ that we have considered falls under such an 
expression; however that form does not cover all natural numbers.  
The consideration of the class of natural numbers which is not discussed in 
this paper is a work in progress.




\section*{Competing interests}
  The authors declare that they have no competing interests.

\section*{Acknowledgments}  
I am indebtedly thankful to 
Prof.\ Emeritus Kenneth S. Williams 
for fruitful comments and suggestions on a draft of this paper.

\section*{Tables}

\begin{longtable}{|c|cccc|} \hline
   & \textbf{1}  &  \textbf{3}  & \textbf{11} & \textbf{33}  \\ \hline
\textbf{1}  & 0 & 8 & 0 & 0   \\ \hline
\textbf{2}  & 4 & 0 & 4 & 0  \\ \hline
\textbf{3}  & 3 & 1 & 3 & 1  \\ \hline
\textbf{4}  & 2 & 2 & 2 & 2   \\ \hline
\textbf{5}  & 1 & 3 & 1 & 3   \\ \hline
\textbf{6}  & 0 & 4 & 0 & 4   \\ \hline
\textbf{7}  & -1 & 5 & -1 & 5 \\ \hline
\textbf{8}  & -2 & 6 & -2 & 6   \\ \hline
\textbf{9}  & 6 & 0 & 0 & 2  \\ \hline
\textbf{10} & 4 & -2 & -2 & 8  \\ \hline
\caption{Power of $\eta$-quotients being basis elements of $\S_{4}(\Gamma_{0}(33))$}
\label{convolutionSums-3_11-table}
\end{longtable}

{ 
\begin{longtable}{|c|cccccccc|} \hline
   & \textbf{1}  &  \textbf{2}  & \textbf{4} & \textbf{5}  &
   \textbf{8} & \textbf{10} & \textbf{20}  & \textbf{40}  \\ \hline
 \textbf{1}  & 4 & 0 & 0 & 4 & 0 & 0 & 0 & 0  \\ \hline
 \textbf{2}  & 0 & 4 & 0 & 0 & 0 & 4 & 0 & 0   \\ \hline
 \textbf{3}  & 2 & 0 & 0 & -2 & 0 & 8 & 0 & 0    \\ \hline
 \textbf{4}  & 0 & 0 & 4 & 0 & 0 & 0 & 4 & 0   \\ \hline
 \textbf{5}  & 0 & 0 & 0 & 0 & 0 & 4 & 4 & 0   \\ \hline
 \textbf{6} & 0 & 2 & 0 & 0 & 0 & -2 & 8 & 0    \\ \hline
 \textbf{7} & 2 & -2 & 0 & -2 & 0 & 2 & 8 & 0   \\ \hline
 \textbf{8}  & 0 & 0 & 0 & 0 & 4 & 0 & 0 & 4   \\ \hline
 \textbf{9}  & 0 & 0 & 0 & 0 & 2 & 4 & -4 & 6    \\ \hline
 \textbf{10}  & 2 & -2 & 2 & 2 & -2 & 0 & 0 & 6   \\ \hline
 \textbf{11}  & 1 & 0 & 0 & -1 & 1 & 2 & -2 & 7  \\ \hline
 \textbf{12} & 0 & 0 & 2 & 0 & 0 & 0 & -2 & 8    \\ \hline
 \textbf{13} & 0 & 4 & 0 & 0 & -2 & 0 & -4 & 10    \\ \hline
 \textbf{14} & 0 & 2 & -2 & 0 & 0 & -2 & 2 & 8   \\ \hline
\caption{Power of $\eta$-quotients being basis elements of $\S_{4}(\Gamma_{0}(40))$}
\label{convolutionSums-5_8-table}
\end{longtable}
}

\begin{longtable}{|c|cccccccc|} \hline
   & \textbf{1}  &  \textbf{2}  & \textbf{4} & \textbf{7} & \textbf{8}  &
   \textbf{14}  & \textbf{28} & \textbf{56}  \\ \hline
\textbf{1}  & 5 & -1 & 0 & 5 & 0 & -1 & 0 & 0  \\ \hline
\textbf{2}  & 2 & 2 & 0 & 2 & 0 & 2 & 0 & 0 \\ \hline
\textbf{3}  & 6 & -2 & 0 & -2 & 0 & 6 & 0 & 0  \\ \hline
\textbf{4}  & 0 & 2 & 2 & 0 & 0 & 2 & 2 & 0  \\ \hline
\textbf{5}  & 0 & 0 & 2 & 0 & 0 & 4 & 2 & 0  \\ \hline
\textbf{6}  & 0 & 6 & -2 & 0 & 0 & -2 & 6 & 0  \\ \hline
\textbf{7}  & 0 & 4 & -2 & 0 & 0 & 0 & 6 & 0  \\ \hline
\textbf{8}  & 1 & 1 & 0 & 1 & 0 & -3 & 8 & 0  \\ \hline
\textbf{9}  & 0 & 1 & 1 & 0 & 0 & -3 & 9 & 0  \\ \hline
\textbf{10} & 0 & 0 & 0 & 0 & 2 & 0 & 4 & 2  \\ \hline
\textbf{11} & 0 & -2 & 8 & 0 & -2 & 2 & -4 & 6  \\ \hline
\textbf{12} & 0 & 0 & 6 & 0 & -2 & 0 & -2 & 6  \\ \hline
\textbf{13} & 0 & 0 & 3 & 0 & -1 & 4 & -5 & 7  \\ \hline
\textbf{14} & 0 & 0 & 4 & 0 & -2 & 0 & 0 & 6  \\ \hline
\textbf{15} & 0 & 2 & 2 & 0 & -2 & -2 & 2 & 6  \\ \hline
\textbf{16} & 0 & 1 & 1 & 0 & 0 & 1 & -3 & 8  \\ \hline
\textbf{17} & 0 & 3 & -1 & 0 & 0 & -1 & -1 & 8  \\ \hline
\textbf{18} & 0 & 0 & 1 & 0 & 1 & 0 & -3 & 9  \\ \hline
\textbf{19} & 0 & 1 & 0 & 0 & -1 & -3 & 4 & 7  \\ \hline
\textbf{20} & -2 & 5 & -3 & 2 & 0 & -5 & 7 & 4  \\ \hline
\caption{Power of $\eta$-quotients being basis elements of $\S_{4}(\Gamma_{0}(56))$}
\label{convolutionSums-7_8-table}
\end{longtable}




\end{document}